\documentclass[a4paper,12pt]{article}
%
%
%
%
%
%

\setlength\topmargin{0mm}
\setlength\headheight{0mm}
\setlength\headsep{0mm}
\setlength\topskip{0mm}
\setlength\textheight{230mm}
\setlength\footskip{20mm}

\setlength\oddsidemargin{0mm}
\setlength\evensidemargin{0mm}
\setlength\textwidth{160mm}

\setlength\parindent{5mm}
\setlength\parskip{3mm}

\pagestyle{plain}

\usepackage{amsthm}
\usepackage{amsmath,amssymb,latexsym,amsfonts,mathrsfs}



\theoremstyle{plain}
\newtheorem{Thm}{Theorem}[section]

\newtheorem{Lem}[Thm]{Lemma}
\newtheorem{Prop}[Thm]{Proposition}
\newtheorem{Cor}[Thm]{Corollary}

\theoremstyle{definition}

\newtheorem{Rem}[Thm]{Remark}
\newtheorem{Ex}[Thm]{Example}


\newcommand{\eps}{\ensuremath{\varepsilon}}
\newcommand{\e}{{\rm e}} 
\renewcommand{\d}{{\rm d}} 

\newcommand{\tend}[2]{\mathrel{\mathop{\longrightarrow}\limits^{#1}_{#2}}}

\renewcommand{\hat}{\widehat}
\renewcommand{\tilde}{\widetilde}


\newcommand{\bN}{\ensuremath{\mathbb{N}}}

\newcommand{\bR}{\ensuremath{\mathbb{R}}}

\newcommand{\cB}{\ensuremath{\mathcal{B}}}
\newcommand{\cC}{\ensuremath{\mathcal{C}}}

\newcommand{\cE}{\ensuremath{\mathcal{E}}}
\newcommand{\cF}{\ensuremath{\mathcal{F}}}

\newcommand{\cK}{\ensuremath{\mathcal{K}}}

\newcommand{\cN}{\ensuremath{\mathcal{N}}}

\newcommand{\cS}{\ensuremath{\mathcal{S}}}

\newcommand{\sW}{\ensuremath{\mathscr{W}}}

\newcommand{\vn}{\ensuremath{\mbox{{\boldmath $n$}}}}


\numberwithin{equation}{section}

\makeatletter
\renewcommand\section{\@startsection {section}{1}{\z@}%
                                   {-3.5ex \@plus -1ex \@minus -.2ex}%
                                   {2.3ex \@plus.2ex}%
                                   {\normalfont\large\bf}}
\makeatother

\makeatletter
\renewcommand\subsection{\@startsection {subsection}{1}{\z@}%
                                   {-3.5ex \@plus -1ex \@minus -.2ex}%
                                   {2.3ex \@plus.2ex}%
                                   {\normalfont\normalsize\bf}}
\makeatother

\makeatletter
\@addtoreset{footnote}{page}
\makeatother


\newcommand{\absol}[1]{\left| #1 \right|} 
\newcommand{\rbra}[1]{\left( #1 \right)} 
\newcommand{\cbra}[1]{\left\{ #1 \right\}} 
\newcommand{\sbra}[1]{\left[ #1 \right]} 
\newcommand{\abra}[1]{\left\langle #1 \right\rangle} 

\renewcommand{\subitem}{\par\hangindent 5mm \hspace*{-2mm}}
\renewcommand{\subsubitem}{\par\hangindent 10mm \hspace*{2mm}}

\begin{document}

\begin{center}
{\Large \bf Cameron--Martin formula for the $ \sigma $-finite measure 
unifying Brownian penalisations} 
\end{center}
\begin{center}
Kouji \textsc{Yano}\footnote{
Department of Mathematics, Graduate School of Science,
Kobe University, Kobe, JAPAN.}\footnote{
The research of this author was supported by KAKENHI (20740060).}
\end{center}
\begin{center}
{\small \today}
\end{center}

\begin{abstract}
Quasi-invariance under translation 
is established 
for the $ \sigma $-finite measure unifying Brownian penalisations, 
which has been introduced by Najnudel, Roynette and Yor 
({\em C. R. Math. Acad. Sci. Paris}, {\bf 345} no. 8, 459--466, 2007). 
For this purpose, 
the theory of Wiener integrals for centered Bessel processes, 
due to Funaki, Hariya and Yor 
({\em ALEA Lat. Am. J. Probab. Math. Stat.}, {\bf 1}, 225--240, 2006), 
plays a key role. 
\end{abstract}

\noindent
{\footnotesize Keywords and phrases: Cameron--Martin formula, quasi-invariance, 
penalization, Wiener integral.} 
\\
{\footnotesize AMS 2000 subject classifications: 
Primary
46G12; 
secondary
60J65; 
60H05. 
}


\section{Introduction}

Let $ \Omega = C([0,\infty ) \to \bR) $. 
Let $ (X_t:t \ge 0) $ denote the coordinate process 
and set $ \cF_{\infty } = \sigma(X_t:t \ge 0) $. 
We consider the following $ \sigma $-finite measure on $ (\Omega,\cF_{\infty }) $: 
\begin{align}
\sW = \int_0^{\infty } \frac{\d u}{\sqrt{2 \pi u}} \Pi^{(u)} \bullet R 
\label{eq: def of sW}
\end{align}
where $ \Pi^{(u)} \bullet R $ is given as follows: 
\subitem (i) 
$ \Pi^{(u)} $ denotes the law of the Brownian bridge from 0 to 0 of length $ u $; 
\subitem (ii) 
$ R $ denotes the law of the symmetrized 3-dimensional Bessel process; 
\subitem (iii) 
$ \Pi^{(u)} \bullet R $ denotes the concatenation of $ \Pi^{(u)} $ and $ R $. 

\noindent
This measure $ \sW $ 
has been introduced by Najnudel, Roynette and Yor (\cite{NRY1} and \cite{NRY2}) 
so that it unifies 
various Brownian penalisations. 
The Brownian penalisations can be explained roughly as follows 
(we will discuss details in Section \ref{sec: penal}): 
For a ``good" family $ \{ \Gamma_t(X) \} $ of non-negative $ \cF_{\infty } $-functionals 
such that $ \Gamma_t(X) \to \Gamma(X) $ as $ t \to \infty $, it holds that 
\begin{align}
\sqrt{\frac{\pi t}{2}} W[F_s(X)\Gamma_t(X)] 
\tend{}{t \to \infty } 
\sW[F_s(X)\Gamma(X)] 
\label{eq: rough penal}
\end{align}
for any bounded $ \cF_s $-measurable functional $ F_s(X) $. 

The purpose of this paper 
is to establish quasi-invariance of $ \sW $ 
under $ h $-translation when $ h $ belongs to {\em the Cameron--Martin type space}: 
\begin{align}
\cbra{ h \in \Omega : h_t = \int_0^t f(s) \d s 
\ \text{for some} \ 
f \in L^2(\d s) \cap L^1(\d s) } . 
\label{}
\end{align}
Now we state our main theorem. 

\begin{Thm} \label{thm: main}
Suppose that $ h_t = \int_0^t f(s) \d s $ with $ f \in L^2(\d s) \cap L^1(\d s) $. 
Then, for any non-negative $ \cF_{\infty } $-measurable functional $ F(X) $, 
it holds that 
\begin{align}
\sW[F(X+h)] = \sW[F(X) \cE(f;X)] 
\label{eq: main}
\end{align}
where 
\begin{align}
\cE(f;X) 
= \exp \rbra{ \int_0^{\infty } f(s) \d X_s 
- \frac{1}{2} \int_0^{\infty } f(s)^2 \d s } . 
\label{eq: intro cE}
\end{align}
\end{Thm}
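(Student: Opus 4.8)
The plan is to exploit the defining disintegration $\sW=\int_0^\infty\frac{\d u}{\sqrt{2\pi u}}\,\Pi^{(u)}\bullet R$, to push the translation $X\mapsto X+h$ through it, and then to invoke two Cameron--Martin formulas: the classical one for the Brownian bridge on $[0,u]$, and the Funaki--Hariya--Yor one for the symmetrized $3$-dimensional Bessel process started from $0$ on $[u,\infty)$. The latter is needed already to give a meaning to the stochastic integral $\int_0^\infty f(s)\,\d X_s$ in $\cE(f;X)$, because under $\sW$ the post-junction part of $X$ carries the singular drift $\pm\,\d s/X_s$; the Funaki--Hariya--Yor theory of Wiener integrals for centered Bessel processes is precisely what controls, for $f\in L^2\cap L^1$, that integral together with the ``compensator'' integral $\int_0^\infty f(u+s)\,\d s/\omega'_s$ that will appear below. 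By monotone convergence it suffices to prove \eqref{eq: main} for bounded $F$ (and, if helpful, first for $f$ with compact support, passing to the limit at the end).

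First transport the translation through the gluing: if $X=\omega\bullet\omega'$, with $\omega$ a path on $[0,u]$, $\omega_0=\omega_u=0$, glued at time $u$ to a path $\omega'$ on $[0,\infty)$, $\omega'_0=0$, then pathwise $X+h=\tilde\omega\bullet\tilde\omega'$ with $\tilde\omega_t=\omega_t+h_t$ on $[0,u]$ and $\tilde\omega'_s=\omega'_s+h_{u+s}$ on $[0,\infty)$, the junction value having moved from $0$ to $h_u$; hence
\[
\sW[F(X+h)]=\int_0^\infty\frac{\d u}{\sqrt{2\pi u}}\ \bE_{\Pi^{(u)}\otimes R}\left[F\left((\omega+h|_{[0,u]})\bullet(\omega'+h_{u+\cdot})\right)\right].
\]
For each fixed $u$, split $h=\ell^{(u)}+m^{(u)}$, where $\ell^{(u)}_t=h_u\min(t/u,1)$ interpolates $h$ linearly on $[0,u]$ and is held constant afterwards, and $m^{(u)}_t=h_t-\ell^{(u)}_t$. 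Then $m^{(u)}_0=m^{(u)}_u=0$ and $\dot m^{(u)}=f-(h_u/u)\mathbf 1_{[0,u]}\in L^2\cap L^1$; thus $m^{(u)}$ is an admissible Cameron--Martin shift for the bridge on $[0,u]$ and, on $[u,\infty)$, for the Bessel piece (its restriction there starts at $0$), while $\ell^{(u)}|_{[0,u]}$ merely turns $\Pi^{(u)}$ into the bridge from $0$ to $h_u$ and $\ell^{(u)}|_{[u,\infty)}$ is the additive constant $h_u$.

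Writing $X+h=(X+\ell^{(u)})+m^{(u)}$, the shift by $m^{(u)}$ is now removed: on $[0,u]$ by the Brownian-bridge Cameron--Martin theorem, on $[u,\infty)$ by the Funaki--Hariya--Yor Cameron--Martin theorem (the additive constant $h_u$ carried there by $X+\ell^{(u)}$ being first undone by a deterministic translation of path space). Collecting both densities, using $m^{(u)}_0=m^{(u)}_u=0$ to cancel the boundary terms, and using that $\int_0^\infty f\,\d X_s=\int_0^u f\,\d\omega_s+\int_0^\infty f(u+s)\,\d\omega'_s$ under the disintegration, I expect the computation to collapse to
\[
\sW[F(X+h)]=\int_0^\infty\frac{\d u}{\sqrt{2\pi u}}\,\e^{\,h_u^2/(2u)}\ \bE_{\Pi^{(u)}\otimes R}\!\left[F\!\left(X+\ell^{(u)}\right)\,\cE(f;X)\,\e^{-\int_0^\infty f(u+s)\,\d s/\omega'_s}\right],\qquad X=\omega\bullet\omega'.
\]
(The factor $\e^{h_u^2/(2u)}$ comes from the $-\tfrac12\int_0^u(\dot m^{(u)})^2$ term, and with the signed $1/\omega'_s$ one checks that $\cE(f;X)$ restricted to $[u,\infty)$, multiplied by $\e^{-\int_0^\infty f(u+s)\,\d s/\omega'_s}$, has $R$-expectation $1$; so each of these two extra factors is separately mean-$1$, but they do not decouple from $F(X+\ell^{(u)})$.)

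It remains to absorb the shift by $\ell^{(u)}$ in the argument of $F$, and this is the crux. It cannot be done inside a single slice: restricted to the level-$u$ slice the translated measure lives on $\{X_u=h_u\}$, whereas the level-$u$ slice of $\sW$ lives on $\{X_u=0\}$, and these are mutually singular, so there is no slice-by-slice identity --- \eqref{eq: main} is created only after the $\int_0^\infty\frac{\d u}{\sqrt{2\pi u}}$-integration. Concretely I would aim to show that the difference between the right-hand side of the last display and $\int_0^\infty\frac{\d u}{\sqrt{2\pi u}}\bE_{\Pi^{(u)}\otimes R}[F(X)\cE(f;X)]=\sW[F(X)\cE(f;X)]$ is a total derivative in $u$ whose boundary contributions at $u=0$ and $u=\infty$ vanish, the cancellation being that of $\e^{h_u^2/(2u)}$ against the $u$-dependence of $\ell^{(u)}$ and of the compensator integral; the Funaki--Hariya--Yor description of the Bessel piece is invoked once more, both to carry out the differentiation and to justify the boundary behaviour, and here the $L^1$-integrability of $f$ (hence $h_\infty<\infty$ and integrability of the tail of the compensator) is used. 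Granting this, $\sW[F(X+h)]=\sW[F(X)\cE(f;X)]$ for bounded $F$; a monotone-class argument in $F$ together with a monotone/dominated passage to the limit in $f$ --- controlling $\cE(f_n;X)\to\cE(f;X)$ both $\sW$-a.e.\ and along the translated paths --- then gives \eqref{eq: main} in full generality.
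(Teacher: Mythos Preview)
Your proposal rests on two ingredients that do not exist in the form you need.

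\textbf{There is no Cameron--Martin formula for the Bessel piece.} You invoke ``the Funaki--Hariya--Yor Cameron--Martin theorem'' for the symmetrized $3$-dimensional Bessel process, but Theorem~\ref{thm: FHY} is not a quasi-invariance statement at all: it is a Brascamp--Lieb--type \emph{inequality}
\[
R^+_a\!\left[\psi\!\left(\int_0^\infty f\,\d\hat X^{(a)}\right)\right]\le W\!\left[\psi\!\left(\int_0^\infty f\,\d X\right)\right]
\]
for convex $\psi$, used in the paper only to bound moments of Wiener integrals. In fact $R$ is \emph{not} quasi-invariant under translation by a Cameron--Martin drift: $R$ lives on paths that never hit $0$ after time $0$, and the $h$-translate does not. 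So the density $\e^{-\int_0^\infty f(u+s)\,\d s/\omega'_s}$ you write down is not the Radon--Nikodym derivative of any absolutely continuous image measure; the slice-by-slice Girsanov step on $[u,\infty)$ simply fails.

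\textbf{The ``total derivative in $u$'' step is the whole theorem.} You correctly observe that, for each fixed $u$, the translated slice and the original slice are mutually singular, and that the identity can only emerge after the $\int_0^\infty\frac{\d u}{\sqrt{2\pi u}}$ integration. But your proposed mechanism --- that the discrepancy is an exact $u$-derivative with vanishing boundary terms --- is asserted, not argued (``Granting this\ldots''). This is precisely the hard part; nothing in the disintegration makes such a cancellation visible, and you give no candidate for the primitive.

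The paper proceeds along an entirely different line that sidesteps both obstacles. It never decomposes the shift along the disintegration. Instead it truncates the drift: for fixed $T$, the Markov property $\sW[Z_T(X)F(\theta_TX)]=W[Z_T(X)\,\sW_{X_T}[F]]$ shows that, as far as $\cF_T$-behaviour is concerned, $\sW$ is governed by the \emph{Brownian} transition, so the classical Cameron--Martin formula on $[0,T]$ yields $\sW[F(X+h_{\cdot\wedge T})]=\sW[F(X)\,\cE_T(f;X)]$ directly (Proposition~\ref{thm: main lem}). The passage $T\to\infty$ is then a genuine analytic difficulty because $\sW[\cE(f;X)]=\infty$; it is handled by testing against Feynman--Kac weights $\e^{-g(X)}\cK(V;X)$, for which Theorem~\ref{thm: nondeg} gives $\sW[\cK(V;X)\cE(f;X)]<\infty$ via the penalisation description of $W^{(V)}$, and the Funaki--Hariya--Yor inequality is used only at this stage (Lemma~\ref{lem3-2}) to control $R_a[|\cE(f(\cdot+t);\cdot)-1|^2]$ uniformly in $a$. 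The convergence of the left side uses a separate domination argument (Lemmas~\ref{lem0}--\ref{lem1}).
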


Theorem \ref{thm: main} will be proved in Section \ref{sec: proof}. 

Theorem \ref{thm: main} involves {\em Wiener integral}, i.e., 
the stochastic integral $ \int_0^{\infty } f(s) \d X_s $ 
of a deterministic function $ f $. 
(To avoid confusion, we give the following remark: 
In \cite{CM1} and \cite{CM2}, the Wiener integral means 
the integral with respect to the Wiener measure.) 
The author has proved in his recent work \cite{Y1} that 
this Wiener integral is well-defined 
if $ f \in L^2(\d s) \cap L^1(\frac{\d s}{1 + \sqrt{s}}) $, 
i.e., 
\begin{align}
\int_0^{\infty } |f(s)|^2 \d s + \int_0^{\infty } |f(s)| \frac{\d s}{1 + \sqrt{s}} < \infty . 
\label{}
\end{align}
Note the obvious inclusion: $ L^1(\d s) \subset L^1(\frac{\d s}{1 + \sqrt{s}}) $. 
We will discuss details in Section \ref{sec: Wint}. 
One may conjecture that Theorem \ref{thm: main} is valid 
for $ h_t = \int_0^t f(s) \d s $ with $ f \in L^2(\d s) \cap L^1(\frac{\d s}{1 + \sqrt{s}}) $, 
but we have not succeeded at this point.

We give several remarks which help us to understand Theorem \ref{thm: main} deeply. 

$ \mathbf{1^{\circ}).} $ 
{\bf Rephrasing the main theorem.} 
Let $ g(X) $ denote the last exit time from 0 for $ X $: 
\begin{align}
g(X) = \sup \{ u \ge 0 : X_u = 0 \} . 
\label{}
\end{align}
For $ u \ge 0 $, let $ \theta_u X $ denote the shifted process: 
$ (\theta_u X)_s = X_{u+s} $, $ s \ge 0 $. 
Then the definition \eqref{eq: def of sW} says that 
the measure $ \sW $ can be described as follows: 
\subitem {\rm (i)} 
$ \displaystyle \sW(g(X) \in \d u) = \frac{\d u}{\sqrt{2 \pi u}} $; 
\subitem {\rm (ii)} 
For (Lebesgue) a.e. $ u \in [0,\infty ) $, 
it holds that, given $ g(X)=u $, 
\subsubitem {\rm (iia)} 
$ (X_s:s \le u) $ is a Brownian bridge from 0 to 0 of length $ u $; 
\subsubitem {\rm (iib)} 
$ ((\theta_u X)_s:s \ge 0) $ is 
a symmetrized 3-dimensional Bessel process. 

\noindent
In the same manner as this, 
Theorem \ref{thm: main} can be rephrased as the following corollary. 
We write $ T_h^* \sW $ for the image measure of $ X+h $ under $ \sW $. 
For $ u \in [0,\infty ) $, we define 
\begin{align}
\cE_u(f;X) 
= \exp \rbra{ \int_0^u f(s) \d X_s 
- \frac{1}{2} \int_0^u f(s)^2 \d s } . 
\label{}
\end{align}

\begin{Cor}
Suppose that $ h_t = \int_0^t f(s) \d s $ with $ f \in L^2(\d s) \cap L^1(\d s) $. 
Then it holds that 
\begin{align}
T_h^* \sW = \int_0^{\infty } \d u \, \rho^f(u) 
\, \Pi^{(u),f} \bullet R^{f(\cdot+u)} 
\label{}
\end{align}
where 
\begin{align}
\rho^f(u) 
=& \frac{1}{\sqrt{2 \pi u}} \Pi^{(u)} \sbra{\cE_u(f;\cdot)} R \sbra{ \cE(f(\cdot+u);\cdot) } 
, 
\label{} \\
\Pi^{(u),f}(\d X) 
=& \frac{\cE_u(f;X) \Pi^{(u)}(\d X)}{\Pi^{(u)} \sbra{\cE_u(f;\cdot)}} 
, 
\label{} \\
R^{f(\cdot+u)}(\d X) 
=& \frac{\cE(f(\cdot+u);X) R(\d X)}{R \sbra{\cE(f(\cdot+u);\cdot)}} . 
\label{}
\end{align}
In other words, the law of the process $ X+h $ under $ \sW $ may be described as follows: 
\subitem {\rm (i)$ ' $} 
$ \displaystyle \sW(g(X+h) \in \d u) = \rho^f(u) \d u $; 
\subitem {\rm (ii$ ' $)} 
For a.e. $ u \in [0,\infty ) $, 
it holds that, given $ g(X+h)=u $, 
\subsubitem {\rm (iia$ ' $)} 
$ (X_s+h_s:s \le u) $ has law $ \Pi^{(u),f} $; 
\subsubitem {\rm (iib$ ' $)} 
$ ((\theta_u(X+h))_s:s \ge 0) $ has law $ R^{f(\cdot+u)} $. 
\end{Cor}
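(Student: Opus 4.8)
The plan is to deduce the Corollary from Theorem \ref{thm: main} by disintegrating both sides along the last exit time $ g $. First I would reduce, by a monotone class argument, to test functionals of the special product form $ F(X) = \varphi(g(X))\,\Phi((X_s)_{0\le s\le g(X)})\,\Psi((\theta_{g(X)}X)_s:s\ge 0) $ with $ \varphi\ge 0 $ Borel on $ [0,\infty) $ and $ \Phi,\Psi $ bounded and non-negative. This reduction is legitimate because the map $ X\mapsto\bigl(g(X),\,(X_{s\wedge g(X)})_{s\ge0},\,\theta_{g(X)}X\bigr) $ is, away from the set $ \{g(X)=\infty\} $, a measurable bijection (with inverse given by concatenation) onto the set of triples whose third component vanishes only at time $ 0 $; moreover $ \{g(X)=\infty\} $ is $ \sW $-null by the description (i)--(ii) of $ \sW $ recalled just before the Corollary, and is $ T_h^*\sW $-null because Theorem \ref{thm: main} exhibits $ T_h^*\sW $ as a measure absolutely continuous with respect to $ \sW $.

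The core computation is for the right-hand side $ \sW[F(X)\cE(f;X)] $. Using (i)--(ii) I condition on $ g(X)=u $, under which the pre-$ g $ and post-$ g $ parts of $ X $ are independent with laws $ \Pi^{(u)} $ and $ R $. The one algebraic input needed is the factorisation of $ \cE $ along the concatenation at level $ u $: from $ \int_0^\infty f(s)\,\d X_s=\int_0^u f(s)\,\d X_s+\int_0^\infty f(s+u)\,\d(\theta_u X)_s $ and the analogous splitting of $ \int_0^\infty f(s)^2\,\d s $, one gets, on $ \{g(X)=u\} $, that $ \cE(f;X)=\cE_u(f;X)\cdot\cE(f(\cdot+u);\theta_u X) $. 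Substituting, the right-hand side equals $ \int_0^\infty \tfrac{\d u}{\sqrt{2\pi u}}\,\Pi^{(u)}[\varphi(u)\Phi\,\cE_u(f;\cdot)]\,R[\Psi\,\cE(f(\cdot+u);\cdot)] $; multiplying and dividing the two factors by $ \Pi^{(u)}[\cE_u(f;\cdot)] $ and $ R[\cE(f(\cdot+u);\cdot)] $ respectively — both positive and, for a.e. $ u $, finite by the integrability of the relevant Wiener integrals (Section \ref{sec: Wint}) — turns this into $ \int_0^\infty \varphi(u)\,\rho^f(u)\,\Pi^{(u),f}[\Phi]\,R^{f(\cdot+u)}[\Psi]\,\d u $, i.e. the integral of $ \varphi(g)\Phi\Psi $ against $ \int_0^\infty\d u\,\rho^f(u)\,\Pi^{(u),f}\bullet R^{f(\cdot+u)} $.

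For the left-hand side, since $ F(X+h)=\varphi(g(X+h))\,\Phi((X_s+h_s)_{s\le g(X+h)})\,\Psi((\theta_{g(X+h)}(X+h))_s) $, the quantity $ \sW[F(X+h)] $ is by definition the integral of $ \varphi(g)\Phi\Psi $ against $ T_h^*\sW $. Equating this with the expression obtained in the previous paragraph for every admissible $ \varphi,\Phi,\Psi $, and invoking the reduction of the first step, identifies the two $ \sigma $-finite measures $ T_h^*\sW $ and $ \int_0^\infty\d u\,\rho^f(u)\,\Pi^{(u),f}\bullet R^{f(\cdot+u)} $; the path description (i$'$)--(iib$'$) is then merely a readout of this identity.

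The step I expect to be the only genuine obstacle is the concatenation factorisation $ \cE(f;X)=\cE_u(f;X)\,\cE(f(\cdot+u);\theta_u X) $ on $ \{g(X)=u\} $: it rests on additivity of the Wiener integral $ \int_0^\infty f(s)\,\d X_s $ across the splitting point $ u $, which requires knowing that the globally defined integral (available under $ \sW $ by \cite{Y1}) restricts on $ [0,u] $ and on $ [u,\infty) $ to the Wiener integrals constructed intrinsically for the Brownian bridge and for the symmetrised $ 3 $-dimensional Bessel process. Granting this — it is part of the machinery developed in Section \ref{sec: Wint} and used in Section \ref{sec: proof} — the remainder is routine bookkeeping with the disintegration of $ \sW $ along $ g $.
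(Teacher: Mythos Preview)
Your proposal is correct and is essentially the argument the paper has in mind: the paper does not give a separate proof of the Corollary but presents it as an immediate rephrasing of Theorem~\ref{thm: main} via the disintegration $\sW=\int_0^\infty \frac{\d u}{\sqrt{2\pi u}}\,\Pi^{(u)}\bullet R$. The one nontrivial ingredient you flag --- the splitting $\cE(f;X)=\cE_u(f;X)\,\cE(f(\cdot+u);\theta_uX)$ on $\{g(X)=u\}$ --- is exactly the content of Theorem~\ref{thm: approx}, and the a.e.-in-$u$ finiteness of $R[\cE(f(\cdot+u);\cdot)]$ that you need is supplied by Lemma~\ref{lem3-2} together with Lemma~\ref{lem L1}(i).
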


$ \mathbf{2^{\circ}).} $ 
{\bf Sketch of the proof.}
We will divide the proof of Theorem \ref{thm: main} into the following steps: 
\subitem 
Step 1. 
$ \sW[F(X+h_{\cdot \wedge T})] = \sW[F(X) \cE_T(f;X)] $ 
for $ 0<T<\infty $; 
\subitem 
Step 2. 
$ \sW[F(X) \cE_T(f;X)] \to \sW[F(X) \cE(f;X)] $ as $ T \to \infty $; 
\subitem 
Step 3. 
$ \sW[F(X+h_{\cdot \wedge T})] \to \sW[F(X+h)] $ as $ T \to \infty $. 

\noindent
Note that, in Steps 2 and 3, 
we will confine ourselves to certain particular classes of test functions $ F $. 

One may think that 
Step 1 should be immediate from the following rough argument using \eqref{eq: rough penal}: 
For any ``good" $ \cF_s $-measurable functional $ F_s(X) $, 
\begin{align}
\sW[F_s(X+h_{\cdot \wedge T}) \Gamma(X+h_{\cdot \wedge T})] 
=& \lim_{t \to \infty } \sqrt{\frac{\pi t}{2}} 
W[F_s(X+h_{\cdot \wedge T}) \Gamma_t(X+h_{\cdot \wedge T})] 
\label{} \\
=& \lim_{t \to \infty } \sqrt{\frac{\pi t}{2}} 
W[F_s(X) \cE_T(f;X) \Gamma_t(X)] 
\label{} \\
=& \sW[F_s(X) \cE_T(f;X) \Gamma(X)] . 
\label{}
\end{align}
This observation, however, should be justified carefully, 
because the functional $ \cE_T(f;X) $ is not bounded. 
We shall utilize {\em Markov property} for $ \{ (X_t),\sW \} $ 
(see Subsection \ref{sec: Markov} for the details): 
\begin{align}
\sW[ F_T(X) G(\theta_T X) ] = W \sbra{ F_T(X) \sW_{X_T}[G(\cdot)] } 
\label{eq: exit law}
\end{align}
where $ \sW_x $ is the image measure of $ x+X $ under $ \sW(\d X) $. 
The identity \eqref{eq: exit law} suggests, in a way, that 
$ \{ \sW_x : x \in \bR \} $ is a family of {\em exit laws} 
whose transition up to finite time is the Brownian motion, 
while the Markov property of the Brownian motion asserts that 
\begin{align}
W[ F_T(X) G(\theta_T X) ] = W \sbra{ F_T(X) W_{X_T}[G(\cdot)] } . 
\label{}
\end{align}
This makes a remarkable contrast with {\em It\^o's excursion law} $ \vn $ (see \cite{Ito}), 
which satisfies the Markov property: 
\begin{align}
\vn[ F_T(X) G(\theta_T X) ] = \vn \sbra{ F_T(X) W^0_{X_T}[G(\cdot)] } 
\label{}
\end{align}
where $ \{ (X_t),(W_x^0) \} $ denotes the Brownian motion 
killed upon hitting the origin. 
In other words, $ \vn $ produces a family of {\em entrance laws} 
whose transition after positive time is the killed Brownian motion.

We remark again that the Wiener integral $ \int_0^{\infty } f(s) \d X_s $ is not Gaussian. 
In order to prove necessary estimates involving Wiener integrals in Step 2, 
we utilize the theory of Wiener integrals for {\em centered Bessel processes}, 
which is due to Funaki, Hariya and Yor \cite{FHY2}. 
For the 3-dimensional Bessel process $ \{ (X_t),R^+_a \} $ starting from $ a \ge 0 $, we define 
\begin{align}
\hat{X}^{(a)}_t = X_t - R^+_a[X_t] 
\label{}
\end{align}
and call $ \{ (\hat{X}^{(a)}_t),R^+_a \} $ {\em the centered Bessel process}. 
We shall apply, to the convex function $ \psi(x)=(\e^{|x|}-1)^2 $, 
the following theorem, 
which was proved by Funaki--Hariya--Yor \cite{FHY2} via {\em Brascamp--Lieb inequality} \cite{BL}, 
and from which we derive our necessary estimates. 

\begin{Thm}[\cite{FHY2}] \label{thm: FHY}
For any $ f \in L^2(\d s) $ and 
any non-negative convex function $ \psi $ on $ \bR $, it holds that 
\begin{align}
R^+_a \sbra{ \psi \rbra{ \int_0^{\infty } f(t) \d \hat{X}^{(a)}_t } } 
\le W \sbra{ \psi \rbra{ \int_0^{\infty } f(t) \d X_t } } . 
\label{}
\end{align}
\end{Thm}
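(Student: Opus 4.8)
The plan is to reduce the estimate to a finite-dimensional comparison between the law of a Bessel$(3)$ skeleton and a Gaussian law, and then to apply the Brascamp--Lieb inequality \cite{BL}. First I would treat a step function $ f = \sum_{i=1}^n c_i \mathbf{1}_{(t_{i-1},t_i]} $, $ 0 = t_0 < t_1 < \dots < t_n $, for which
\begin{align}
\int_0^{\infty} f(t)\, \d \hat{X}^{(a)}_t = \sum_{i=1}^n c_i \rbra{ \hat{X}^{(a)}_{t_i} - \hat{X}^{(a)}_{t_{i-1}} }
\end{align}
is a linear functional $ \ell $ of the skeleton $ (X_{t_1},\dots,X_{t_n}) $, centred under $ R^+_a $ (recall $ X_0=a $ and $ \hat{X}^{(a)}_0=0 $); likewise $ \int_0^{\infty} f(t)\,\d X_t = \ell(X_{t_1},\dots,X_{t_n}) $ is centred under $ W $. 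So for step $ f $ the assertion is that the centred functional $ \ell $ of the Bessel$(3)$ skeleton under $ R^+_a $ is dominated in the convex order by $ \ell $ of the Brownian skeleton under $ W $.

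The heart of the matter is the shape of the skeleton density. The transition density of the Bessel$(3)$ process is $ \frac{y}{x}\rbra{ p_t(x-y) - p_t(x+y) } = \frac{y}{x}\,p_t(x-y)\rbra{ 1 - \e^{-2xy/t} } $, with $ p_t $ the heat kernel on $ \bR $; hence, after the telescoping $ \prod_i (x_i/x_{i-1}) = x_n/a $, the law of $ (X_{t_1},\dots,X_{t_n}) $ under $ R^+_a $ ($ a>0 $) has a density on $ (0,\infty)^n $ proportional to
\begin{align}
x_n \prod_{i=1}^n p_{t_i - t_{i-1}}(x_{i-1}-x_i)\rbra{ 1 - \e^{-2 x_{i-1} x_i/(t_i - t_{i-1})} }, \qquad x_0 = a
\end{align}
(for $ a=0 $ an extra factor $ x_1 $ appears, obtained by letting $ a\downarrow 0 $). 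This density is $ \e^{-V} $ for a convex $ V $ on $ \bR^n $ (with $ V=+\infty $ off the positive orthant), and, crucially, $ V $ exceeds the Gaussian exponent $ \frac{1}{2}x^{\top}\Sigma^{-1}x $ of the Brownian skeleton, $ \Sigma = (t_i\wedge t_j)_{i,j} $, by a convex function: $ -\log x_n $ is convex, and the one genuinely computational point is that $ (x,y)\mapsto \log(1-\e^{-2xy/c}) $ is concave on $ (0,\infty)^2 $ for every $ c>0 $ --- which, writing $ u = 2xy/c $, reduces to the elementary inequalities $ \e^u(1-u)\le 1 $ and $ u\,\e^u\ge \e^u-1 $ for $ u\ge 0 $. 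Therefore the Hessian of $ V $ is $ \ge \Sigma^{-1} $ everywhere on the support.

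Now the Brascamp--Lieb inequality \cite{BL} applies directly: if $ \e^{-V}\,\d x $ is a probability density whose potential $ V $ is convex with Hessian bounded below by a constant positive-definite matrix $ A $, then every linear functional, centred, is dominated in the convex order by the centred Gaussian linear functional of the same form under $ \cN(0,A^{-1}) $. Taking $ A = \Sigma^{-1} $ identifies that Gaussian functional with $ \ell $ of the Brownian skeleton under $ W $, and evaluating the convex order at $ \psi $ proves Theorem~\ref{thm: FHY} for step $ f $. For general $ f\in L^2(\d s) $ I would approximate by step functions $ f_n $ (say dyadic conditional expectations on $ [0,T_n] $ with $ T_n\uparrow\infty $), chosen so that $ \|f_n\|_{L^2}\le\|f\|_{L^2} $; the case $ \psi(x)=x^2 $ of the step-function estimate gives $ R^+_a\sbra{\rbra{\int_0^{\infty}(f_n-f_m)\,\d\hat{X}^{(a)}}^2}\le \|f_n-f_m\|_{L^2}^2 $, which both constructs the centred Wiener integral as the $ L^2(R^+_a) $-limit and yields $ \int f_n\,\d\hat{X}^{(a)}\to\int f\,\d\hat{X}^{(a)} $ in $ L^2(R^+_a) $. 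Then Fatou's lemma on the left (along an almost surely convergent subsequence, using continuity of $ \psi $) together with $ W\sbra{\psi\rbra{\int f_n\,\d X_t}}\to W\sbra{\psi\rbra{\int f\,\d X_t}} $ on the right (since $ \int f_n\,\d X_t\sim\cN(0,\|f_n\|_{L^2}^2) $, and convexity of $ \psi $ supplies an integrable pointwise bound, the case $ W[\psi(\int f\,\d X_t)]=\infty $ being trivial) give the inequality in general.

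I expect the second paragraph to be the main obstacle: recognising the skeleton density in a form in which its ``extra log-concavity over the Gaussian'' is manifest, and in particular establishing the concavity of $ \log(1-\e^{-2xy/c}) $. Once that is secured, the rest is the $ h $-transform description of Bessel$(3)$, a direct quotation of \cite{BL}, and routine $ L^2 $-approximation.
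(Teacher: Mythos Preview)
The paper does not give its own proof of this theorem; it simply refers the reader to \cite[Prop.~4.1]{FHY2}. Your proposal is correct and is precisely the Funaki--Hariya--Yor argument: write the finite-dimensional BES$(3)$ skeleton density as a log-concave perturbation of the Brownian skeleton Gaussian, apply the Brascamp--Lieb inequality \cite{BL} to the centred linear functional, and then pass to the $L^2$ limit for general $f$. The identification of the ``extra'' log-concave factor via the $h$-transform form $\frac{y}{x}p_t(x-y)(1-\e^{-2xy/t})$ and the concavity of $(x,y)\mapsto\log(1-\e^{-2xy/c})$ on $(0,\infty)^2$ are exactly the points on which the argument in \cite{FHY2} rests. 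One cosmetic remark: your two ``elementary inequalities'' $\e^u(1-u)\le 1$ and $u\,\e^u\ge \e^u-1$ are in fact the same statement; only one is needed, and it yields the Hessian-determinant condition $1-\e^{-u}\le 2u$ with a factor of two to spare.
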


For the proof of Theorem \ref{thm: FHY}, see \cite[Prop.4.1]{FHY2}.

$ \mathbf{3^{\circ}).} $ 
{\bf Comparison with the Brownian case.} 
Let us recall the well-known {\em Cameron--Martin formula} for Brownian motion 
(see \cite{CM1} and \cite{CM2}). 
Let $ W $ stand for the Wiener measure on $ \Omega $ with $ W(X_0=0)=1 $. 

It is well-known that, if $ h_t = \int_0^t f(s) \d s $ with $ f \in L^2(\d s) $, 
\begin{align}
W [F(X+h)] = W [ F(X) \cE(f;X) ] 
\label{eq: CM}
\end{align}
for any non-negative $ \cF_{\infty } $-measurable functional $ F(X) $. 
It is also well-known that, if $ h \notin H $, 
the image measure of $ X+h $ under $ W(\d X) $ 
is mutually singular on $ \cF_{\infty } $ to $ W(\d X) $. 

It is immediate from \eqref{eq: CM} that, 
if $ h_t = \int_0^t f(s) \d s $ with $ f \in L^2_{\rm loc}(\d s) $, then 
\begin{align}
W [F_t(X+h)] = W [ F_t(X) \cE_t(f;X) ] 
\label{eq: qi local}
\end{align}
for any non-negative $ \cF_t $-measurable functional $ F_t(X) $ 
where 
\begin{align}
\cE_t(f;X) 
= \exp \rbra{ \int_0^t f(s) \d X_s 
- \frac{1}{2} \int_0^t f(s)^2 \d s } . 
\label{}
\end{align}

Now we give some remarks about comparison 
between the two cases of $ W $ and $ \sW $. 

{\rm (i)} 
Let $ f \in L^2(\d s) $. As a corollary of \eqref{eq: CM}, we see that 
$ W[\cE(f;X)] < \infty $ and, consequently, that 
$ W[\cE(f;X)^p] < \infty $ for any $ p \ge 1 $. 
This shows that, 
if $ F(X) \in L^p(W(\d X)) $ for some $ p>1 $, then $ F(X+h) \in L^1(W(\d X)) $. 

Let $ f \in L^2(\d s) \cap L^1(\d s) $. 
In the case of $ \sW $, however, 
we see immediately by taking $ F \equiv 1 $ in \eqref{eq: main} that 
\begin{align}
\sW[\cE(f;X)] = \infty , 
\label{}
\end{align}
which we should always keep in mind. 
Now the following question arises: 
\begin{align}
\sW[\cE(f;X) \Gamma(X)] < \infty 
\label{eq: finiteness}
\end{align}
holds for what functional $ \Gamma(X) $? 
The problem is that we do not know the distribution 
of the Wiener integral $ \int_0^{\infty } f(s) \d X_s $ under $ \sW $; 
in fact, it is no longer Gaussian! 
In Theorem \ref{thm: nondeg}, 
we will appeal to a certain penalisation result 
and establish \eqref{eq: finiteness} 
for {\em Feynman--Kac functionals} $ \Gamma(X) $, 
the class of which we shall introduce in Subsection \ref{sec: FK penal}. 

{\rm (ii)} 
In the Brownian case, we have the following criterion: 
The $ h $-translation of $ W $ is quasi-invariant or singular with respecto to $ W $ 
according as $ h \in L^2(\d s) $ or $ h \notin L^2(\d s) $, respectively. 

In the case of $ \sW $, however, 
we do not know what happens on $ \sW(\d X) $ 
when $ h \notin H $ or when $ f \notin L^1(\d s) $. 

{\rm (iii)} 
Let $ f \in L^2_{\rm loc}(\d s) $. 
In the Brownian case, we have the quasi-invariance \eqref{eq: qi local} on each $ \cF_t $. 
In the case of $ \sW(\d X) $, however, 
we find a drastically different situation (see Theorem \ref{thm: NRY2}): 
For any non-negative $ \cF_t $-measurable functional $ F_t(X) $, 
\begin{align}
\sW[F_t(X+h)] = \sW[F_t(X)] = 0 \ \text{or} \ \infty 
\label{}
\end{align}
according as $ W(F_t(X)=0)=1 $ or $ W(F_t(X)=0)<1 $.

$ \mathbf{4^{\circ}).} $ 
{\bf Integration by parts formulae.} 
From the Cameron--Martin theorem \eqref{eq: CM} in the Brownian case, 
we immediately obtain the following integration by parts formula: 
\begin{align}
W[\nabla_h F(X)] = W \sbra{ F(X) \int_0^{\infty } f(s) \d X_s } 
\label{}
\end{align}
for $ h_t = \int_0^t f(s) \d s $ with $ f \in L^2(\d s) $ 
and for any good functional $ F(X) $, 
where $ \nabla $ denotes the {\em Gross--Sobolev--Malliavin derivative} 
(see, e.g., \cite{Ust}). 
In the case of $ \sW $, from Theorem \ref{thm: main}, 
we may expect the following integration by parts formula: 
\begin{align}
\sW[\partial_h F(X)] = \sW \sbra{ F(X) \int_0^{\infty } f(s) \d X_s } 
\label{eq: ibyp}
\end{align}
for $ h_t = \int_0^t f(s) \d s $ with $ f \in L^2(\d s) \cap L^1(\d s) $ 
and for any good functional $ F(X) $, 
where $ \partial_h $ is in the G\^ateaux sense. 
We have not succeeded in finding a reasonable class of functionals $ F $ 
for which both sides of \eqref{eq: ibyp} make sense and coincide. 

Let us give a remark 
about 3-dimensional Bessel bridge of length $ u $ from 0 to 0, 
which we denote by $ \{ (X_s:s \in [0,u]) , R^{+,(u)} \} $. 
Although we do not have the Cameron--Martin formula for the bridge, 
there is a remarkable result due to Zambotti (\cite{Z02} and \cite{Z03}) 
that the following integration by parts formula holds: 
\begin{align}
R^{+,(1)}[\partial_h F(X)] 
= R^{+,(1)} \sbra{ F(X) \int_0^{\infty } f(s) \d X_s } 
+ {\rm (BC)} 
\label{}
\end{align}
for $ h_t = \int_0^t f(s) \d s $ with $ f $ satisfying a certain regularity condition 
and for any good functional $ F(X) $, 
where $ \partial_h $ is in the G\^ateaux sense 
and where 
\begin{align}
{\rm (BC)} = - \int_0^1 \frac{\d u h_u}{\sqrt{2 \pi u^3 (1-u)^3}} 
\rbra{ R^{+,(u)} \bullet R^{+,(1-u)} } [F(\cdot)] . 
\label{}
\end{align}
The remainder term {\rm (BC)} 
may describe the {\em boundary contribution}. 
Indeed, the measure $ R^{+,(1)} $ is supported 
on the set of non-negative continuous paths on $ [0,1] $, 
while the measure $ R^{+,(u)} \bullet R^{+,(1-u)} $ is supported 
on the subset of paths which hit 0 once and only once; 
the latter set may be regarded in a certain sense 
as the boundary of the former. 
See also Bonaccorsi--Zambotti \cite{Z04}, Zambotti \cite{Z05}, 
Hariya \cite{Hariya} and Funaki--Ishitani \cite{FI} 
for similar results about integration by parts formulae.

The organization 
of this paper is as follows. 
In Section \ref{sec: penal}, 
we recall several results of Brownian penalisations. 
In Section \ref{sec: Wint}, 
we study Wiener integrals for the processes considered. 
Section \ref{sec: proof} is devoted to the proofs 
of our main theorems.

\section{Brownian penalisations} \label{sec: penal}

\subsection{Notations}

Let $ X=(X_t:t \ge 0) $ denote the coordinate process 
of the space $ \Omega = C([0,\infty );\bR) $ of continuous functions 
from $ [0,\infty ) $ to $ \bR $. 
Let $ \cF_t = \sigma(X_s: s \le t) $ for $ 0<t<\infty $ 
and $ \cF_{\infty } = \sigma(\cup_t \cF_t) $. 
For $ 0<u<\infty $, 
we write $ X^{(u)}=(X_t: 0 \le t \le u) $ 
and $ \Omega^{(u)} = C([0,u];\bR) $.

\noindent
$ \mathbf{1^{\circ}).} $ 
{\bf Brownian motion}. 
For $ a \in \bR $, we denote by $ W_a $ the Wiener measure on $ \Omega $ 
with $ W_a(X_0=a)=1 $. 
We simply write $ W $ for $ W_0 $. 

\noindent
$ \mathbf{2^{\circ}).} $ 
{\bf Brownian bridge}. 
We denote by $ \Pi^{(u)} $ the law on $ \Omega^{(u)} $ of the {\em Brownian bridge}: 
\begin{align}
\Pi^{(u)}(\cdot) = W(\cdot|X_u=0) . 
\label{}
\end{align}
The process $ X^{(u)} $ under $ \Pi^{(u)} $ 
is a centered Gaussian process with covariance 
$ \Pi^{(u)}[X_s X_t] = s-st/u $ for $ 0 \le s \le t \le u $. 
As a realization of $ \{ X^{(u)},\Pi^{(u)} \} $, we may take 
\begin{align}
\cbra{ B_s-\frac{s}{u}B_u : s \in [0,u] } . 
\label{}
\end{align}

\noindent
$ \mathbf{3^{\circ}).} $ 
{\bf 3-dimensional Bessel process}. 
For $ a \ge 0 $, 
we denote by $ R_a^+ $ the law on $ \Omega $ 
of the 3-dimensional Bessel process starting from $ a $, 
i.e., the law of the process $ (\sqrt{Z_t}) $ 
where $ (Z_t) $ is the unique strong solution 
to the stochastic differential equation 
\begin{align}
\d Z_t = 2 \sqrt{|Z_t|} \d \beta_t + 3 \d t 
, \quad Z_0 = a^2 
\label{}
\end{align}
with $ (\beta_t) $ a one-dimensional standard Brownian motion. 
Under $ R_a^+ $, the process $ X $ satisfies 
\begin{align}
\d X_t = \d B_t + \frac{1}{X_t} \d t 
, \quad X_0 = a 
\label{eq: 3BES SDE}
\end{align}
with $ \{ (B_t),R_a^+ \} $ a one-dimensional standard Brownian motion. 

For $ a>0 $, we denote by $ R^-_{-a} $ 
the law on $ \Omega $ of $ (-X_t) $ under $ R^+_a $. 
We define 
\begin{align}
R_a = 
\begin{cases}
R^+_a \quad \text{if} \ a>0 , \\
R^-_a \quad \text{if} \ a<0 
\end{cases}
\label{}
\end{align}
and 
\begin{align}
R = R_0 = \frac{R_0^+ + R_0^-}{2} ; 
\label{}
\end{align}
in other words, 
$ R $ is the law on $ \Omega $ of $ (\eps X_t) $ 
under the product measure $ P(\d \eps) \otimes R_0^+(\d X) $ 
where $ P(\eps=1)=P(\eps=-1)=1/2 $. 

\noindent
$ \mathbf{4^{\circ}).} $ 
{\bf The $ \sigma $-finite measure $ \sW $}. 
For $ u>0 $ and for two processes 
$ X^{(u)} = (X_t:0 \le t \le u) $ and $ Y = (Y_t: t \ge 0) $, 
we define the concatenation $ X^{(u)} \bullet Y $ 
as 
\begin{align}
(X^{(u)} \bullet Y)_t = 
\begin{cases}
X_t     \quad & \text{if} \ 0 \le t < u , \\
Y_{t-u} \quad & \text{if} \ t \ge u \ \text{and} \ X_u=Y_0 , \\
X_u       \quad & \text{if} \ t \ge u \ \text{and} \ X_u \neq Y_0 . 
\end{cases}
\label{}
\end{align}
We define the concatenation $ \Pi^{(u)} \bullet R $ 
as the law of $ X^{(u)} \bullet Y $ 
under the product measure $ \Pi^{(u)}(\d X^{(u)}) \otimes R(\d Y) $. 
Then we define 
\begin{align}
\sW = \int_0^{\infty } \frac{\d u}{\sqrt{2 \pi u}} \Pi^{(u)} \bullet R . 
\label{eq: defsW}
\end{align}
For $ x \in \bR $, we define $ \sW_x $ 
as the image measure of $ x+X $ under $ \sW(\d X) $; 
in other words, 
\begin{align}
\sW_x[F(X)] = \sW[F(x+X)] 
\label{eq:defsWx}
\end{align}
for any non-negative $ \cF_{\infty } $-measurable functional $ F(X) $.

\noindent
$ \mathbf{5^{\circ}).} $ 
{\bf Random times}. 
For $ a \in \bR $, we denote {\em the first hitting time of $ a $} by 
\begin{align}
\tau_a(X) = \inf \{ t>0 : X_t=a \} . 
\label{}
\end{align}
We denote {\em the last exit time from 0} by 
\begin{align}
g(X) = 
\sup \{ t \ge 0 : X_t = 0 \} . 
\label{}
\end{align}

\subsection{Feynman--Kac penalisations} \label{sec: FK penal}

Let $ L_t^y(X) $ denote the local time by time $ t $ of level $ y $: 
For $ W_x(\d X) $-a.e. $ X $, it holds that 
\begin{align}
\int_0^t 1_A(X_s) \d s = \int_A L_t^y(X) \d y 
, \quad A \in \cB(\bR) , \ t \ge 0 . 
\label{}
\end{align}
For a non-negative Borel measure $ V $ on $ \bR $ and a process $ (X_t) $ under $ W(\d X) $, 
we write 
\begin{align}
\cK_t(V;X) = \exp \rbra{ - \int_{\bR} L_t^x(X) V(\d x) } 
\label{eq: cKt}
\end{align}
and 
\begin{align}
\cK(V;X) = \exp \rbra{ - \int_{\bR} L_{\infty }^x(X) V(\d x) } . 
\label{eq: cK}
\end{align}
The following theorem is due 
to Roynette--Vallois--Yor \cite{RVY1}, \cite[Thm.4.1]{RVY0} and \cite[Thm.2.1]{RY}. 

\begin{Thm}[{\cite[Thm.2.1]{RY}}] \label{thm: penal}
Let $ V $ be a non-negative Borel measure on $ \bR $ 
and suppose that 
\begin{align}
0 < \int_{\bR} (1+|x|) V(\d x) < \infty . 
\label{eq: V ibility}
\end{align}
Then the following statements hold: 
\subitem {\rm (i)} 
$ \displaystyle \varphi_V(x) := \lim_{t \to \infty } \sqrt{\frac{\pi t}{2}} W_x[\cK_t(V;X)] $ 
and the limit exists in $ \bR_+ $; 
\subitem {\rm (ii)} 
$ \varphi_V $ is the unique solution 
of the Sturm--Liouville equation 
\begin{align}
\varphi_V''(x) = 2 \varphi_V(x) V(\d x) 
\label{eq: SLeq1}
\end{align}
in the sense of distributions (see, e.g., \cite[Appendix \S 8]{RevuzYor}) 
subject to the boundary condition: 
\begin{align}
\lim_{x \to - \infty } \varphi_V'(x) = -1 
\quad \text{and} \quad 
\lim_{x \to \infty } \varphi_V'(x) = 1 ; 
\label{eq: SLeq2}
\end{align}
\subitem {\rm (iii)} 
For any $ 0<s<\infty $ and any bounded $ \cF_s $-measurable functional $ F_s(X) $, 
\begin{align}
\frac{W_x[F_s(X) \cK_t(V;X)]}{W_x[\cK_t(V;X)]} 
\to W_x \sbra{ F_s(X) \frac{\varphi_V(X_s)}{\varphi_V(X_0)} \cK_s(V;X) } 
\quad \text{as} \ t \to \infty ; 
\label{}
\end{align}
\subitem {\rm (iv)} 
$ \displaystyle \rbra{ M_s^{(V)}(X) 
:= \frac{\varphi_V(X_s)}{\varphi_V(X_0)} \cK_s(V;X) : s \ge 0 } $ 
is a $ (W_x,(\cF_s)) $-martingale which converges a.s. to 0 as $ s \to \infty $; 
\subitem {\rm (v)} 
Under the probability measure $ W_x^{(V)} $ on $ \cF_{\infty } $ induced by the relation 
\begin{align}
W_x^{(V)}[F_s(X)] = W_x[F_s(X) M_s^{(V)}(X)] , 
\label{eq: equiv between WxV and Wx}
\end{align}
the process $ (X_t) $ solves the stochastic differential equation 
\begin{align}
X_t = x + B_t + \int_0^t \frac{\varphi_V'}{\varphi_V}(X_s) \d s 
\label{}
\end{align}
where $ (B_t) $ is a $ (W_x^{(V)},(\cF_t)) $-Brownian motion starting from 0; 
in particular, the process $ (X_t) $ is a transient diffusion 
which admits the following function $ \gamma_V(x) $ as its scale function: 
\begin{align}
\gamma_V(x) = \int_0^x \frac{\d y}{\varphi_V(y)^2} . 
\label{}
\end{align}
\end{Thm}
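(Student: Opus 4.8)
The plan is to treat this as a classical one-dimensional diffusion problem, exploiting the special structure of the potential measure $V$ with finite first moment. The key analytic object is the Sturm--Liouville equation \eqref{eq: SLeq1} with the asymptotic boundary condition \eqref{eq: SLeq2}. First I would construct $\varphi_V$ directly: since $V$ is a non-negative measure, solutions of $\varphi'' = 2\varphi\,V(\d x)$ are convex, and one can build the distinguished solution by a fixed-point argument on the integral equation $\varphi_V(x) = 1 + \int_{\bR}(|x-y|-|y|)\varphi_V(y)\,V(\d y)$ after normalizing, say, $\varphi_V(0)$ appropriately. The integrability hypothesis $\int(1+|x|)V(\d x)<\infty$ is exactly what makes this integral converge and guarantees $\varphi_V$ is well-defined, strictly positive, convex, and asymptotically linear with the prescribed slopes $\mp 1$ at $\mp\infty$; uniqueness follows because any two solutions with the same boundary behavior have a difference that is convex and bounded, hence constant, and the normalization pins it down. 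This establishes (ii) (and the existence part of the object used in (i)).

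Next I would prove (i) and (iii) together. The natural approach is to express $W_x[\cK_t(V;X)]$ via the Feynman--Kac semigroup $Q_t$ with killing measure $V$, i.e. $W_x[\cK_t(V;X)] = Q_t 1(x)$. Using the Feynman--Kac formula and the fact that $\varphi_V$ is $Q$-harmonic (i.e. $\frac12\varphi_V'' = \varphi_V V$ means $Q_t\varphi_V = \varphi_V$ in the appropriate sense, since $\varphi_V$ is not bounded one must be slightly careful and localize), one gets an $h$-transform: $M_s^{(V)}(X) = \frac{\varphi_V(X_s)}{\varphi_V(X_0)}\cK_s(V;X)$ is a local martingale under $W_x$, and in fact a genuine martingale because $\cK_s\le 1$ and $\varphi_V(X_s)/\varphi_V(x)$ is controlled by the convexity/linear growth of $\varphi_V$ together with the fact that Brownian motion has moments. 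That gives (iv) (the martingale property); convergence to $0$ follows since under the $h$-transformed measure the process is transient (shown in (v)), so $\cK_s \to \cK(V;X)$ and one checks the limit vanishes using $\int_{\bR}L_\infty^x V(\d x)=\infty$ a.s. for the transient diffusion, or more directly from the scale function being bounded on one side. For (i), the asymptotics $\sqrt{\pi t/2}\,W_x[\cK_t(V;X)]\to\varphi_V(x)$ is the heart of the matter: one writes $W_x[\cK_t(V;X)] = \varphi_V(x)\,W_x^{(V)}[1/\varphi_V(X_t)]$ and shows $\sqrt{\pi t/2}\,W_x^{(V)}[1/\varphi_V(X_t)]\to 1$ using that under $W_x^{(V)}$ the diffusion escapes to $\pm\infty$ where $\varphi_V(y)\sim|y|$, combined with the local central limit / renewal-type estimate that $X_t/\sqrt t$ has a limiting law with the right normalization (this is where the scale function $\gamma_V$ enters and the $\sqrt{\pi t/2}$ constant comes from the Brownian scaling of $|X_t|$). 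Then (iii) is an immediate consequence: writing the conditional expectation as $W_x[F_s \cK_t]/W_x[\cK_t]$ and inserting the Markov property at time $s$,
\begin{align}
\frac{W_x[F_s(X)\cK_t(V;X)]}{W_x[\cK_t(V;X)]}
= \frac{W_x\sbra{F_s(X)\cK_s(V;X)\,W_{X_s}[\cK_{t-s}(V;X)]}}{W_x[\cK_t(V;X)]},
\label{}
\end{align}
and letting $t\to\infty$ with the asymptotics from (i) (plus a uniform integrability / domination argument, again using $\cK\le 1$ and linear growth of $\varphi_V$) yields the stated limit $W_x[F_s(X)\varphi_V(X_s)\varphi_V(X_0)^{-1}\cK_s(V;X)]$.

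Finally, for (v) I would apply Girsanov's theorem to the martingale $M_s^{(V)}$: the Radon--Nikodym density $M_s^{(V)} = \exp(\cdots)$ has, by It\^o's formula applied to $\log\varphi_V(X_s)$ (valid in the distributional sense since $\varphi_V$ is convex, hence $\varphi_V'$ has locally bounded variation and the It\^o--Tanaka formula applies, with the $\d L^x$ terms cancelling against $\cK_s$ by \eqref{eq: SLeq1}), the form $\exp(\int_0^s \frac{\varphi_V'}{\varphi_V}(X_r)\,\d B_r - \frac12\int_0^s(\frac{\varphi_V'}{\varphi_V})^2(X_r)\,\d r)$; Girsanov then gives the SDE with drift $\varphi_V'/\varphi_V$. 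Transience and the scale function $\gamma_V(x)=\int_0^x\varphi_V(y)^{-2}\,\d y$ follow from the standard speed/scale computation for one-dimensional diffusions, noting $\gamma_V$ has finite limit at $+\infty$ or $-\infty$ (since $\varphi_V(y)\sim|y|$ makes $\varphi_V^{-2}$ integrable at one end) which forces transience.

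\textbf{Main obstacle.} The delicate point is the precise asymptotic in (i), i.e. identifying the constant $\sqrt{2/(\pi t)}$ and proving the convergence is genuine and not merely $\liminf$/$\limsup$. This requires a careful analysis of the tail behavior of $W_x[\cK_t(V;X)]$ as $t\to\infty$ — effectively a renewal-theoretic or Tauberian estimate — for which the hypothesis $\int(1+|x|)V(\d x)<\infty$ is essential (it ensures $\varphi_V$ grows exactly linearly, so the "mass at infinity" decays like that of free Brownian motion). A secondary technical nuisance throughout is that $\varphi_V$ is unbounded, so every semigroup identity and every uniform-integrability step must be localized (e.g. via stopping at exit times of large intervals) before passing to the limit.
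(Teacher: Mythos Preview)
The paper does \emph{not} prove Theorem~\ref{thm: penal}; it is merely quoted from the literature (Roynette--Vallois--Yor \cite{RVY1}, \cite[Thm.~4.1]{RVY0} and \cite[Thm.~2.1]{RY}) and used as a black box throughout Sections~\ref{sec: penal} and~\ref{sec: proof}. There is therefore nothing in the present paper to compare your sketch against.

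That said, your outline is broadly the standard route taken in the cited references, so a brief comment on its internal coherence may still be useful. One point deserves care: in your argument for the a.s.\ convergence $M_s^{(V)}\to 0$ in (iv), you invoke ``$\int_{\bR}L_\infty^x\,V(\d x)=\infty$ a.s.\ for the transient diffusion'', but the convergence to zero is asserted under $W_x$, not under $W_x^{(V)}$. Under $W_x$ Brownian motion is recurrent, so indeed $\cK_s(V;X)\to 0$; however $\varphi_V(X_s)$ oscillates unboundedly, and one must argue that the product still tends to zero (e.g.\ via the non-negative martingale convergence theorem combined with the mutual singularity of $W_x$ and $W_x^{(V)}$ on $\cF_\infty$, which follows from (v)). Under $W_x^{(V)}$ the process is transient and $L_\infty^x<\infty$ for every $x$, so your stated reasoning does not apply there either. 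This is a minor slip in an otherwise reasonable plan; the genuinely hard step, as you correctly identify, is the sharp $\sqrt{\pi t/2}$ asymptotic in (i), and for that the cited works rely on explicit Laplace--Tauberian analysis rather than the rougher renewal heuristic you sketch.
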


\begin{Rem} \label{rem: linear bound}
By (ii) of Theorem \ref{thm: penal}, we see that 
the function $ \varphi_V $ also enjoys the following properties: 
\subitem {\rm (vi)} 
$ \varphi_V(x) \sim |x| $ as $ x \to \infty $. 
This suggests that the process $ \{ (X_t),(W_x^{(V)}) \} $ 
behaves like 3-dimensional Bessel process when the value of $ |X_t| $ is large. 
\subitem {\rm (vii)} 
$ \inf_{x \in \bR} \varphi_V(x) > 0 $. 
This shows that the origin is regular for itself. 
\end{Rem}

\begin{Ex}[A key example for \cite{RVY1}] \label{ex: penal}
Suppose that $ V = \lambda \delta_0 $ with some $ \lambda > 0 $ 
where $ \delta_0 $ denotes the Dirac measure at $ 0 $. 
That is, 
\begin{align}
\cK_t(\lambda \delta_0;X) = \exp \rbra{ - \lambda L_t^0(X) } . 
\label{}
\end{align}
Then we can solve equation \eqref{eq: SLeq1}-\eqref{eq: SLeq2} 
and consequently we obtain 
\begin{align}
\varphi_{\lambda \delta_0}(x) = \frac{1}{\lambda} + |x| , 
\label{eq: varphi1}
\end{align}
\begin{align}
M_t^{(\lambda \delta_0)}(X) = \rbra{ 1 + \lambda |X_t| } \exp \rbra{ - \lambda L_t^0(X) } 
\label{eq: Mslambdadelta0}
\end{align}
and 
\begin{align}
X_t = x + B_t + \int_0^t \frac{{\rm sgn}(X_s)}{\frac{1}{\lambda} + |X_s|} \d s 
\quad \text{under $ W_x^{(V)} $} . 
\label{}
\end{align}
\end{Ex}

\subsection{The universal $ \sigma $-finite measure}

Najnudel--Roynette--Yor (\cite{NRY1} and \cite{NRY2}) 
introduced the measure $ \sW $ on $ \cF_{\infty } $ defined by \eqref{eq: defsW} 
to give a global view on the Brownian penalisations. 
It unifies the Feynman--Kac penalisations 
in the sense of the following theorem, 
which is due 
to Najnudel--Roynette--Yor \cite[Thm.1.1.2 and Thm.1.1.6]{NRY2}; 
see also Yano--Yano--Yor \cite[Thm.8.1]{YYY}. 
See also Najnudel--Nikeghbali (\cite{NN1} and \cite{NN2}) 
for careful treatment of augmentation of filtrations.

\begin{Thm}[\cite{NRY2}] \label{thm: NRY1}
Let $ x \in \bR $ 
and let $ V $ be a non-negative measure on $ \bR $ satisfying \eqref{eq: V ibility}. 
Then 
it holds that 
\begin{align}
\sW_x \sbra{ Z_t(X) \cK(V;X) } = W_x \sbra{ Z_t(X) \varphi_V(X_t) \cK_t(V;X) } 
\label{eq: local equiv sW and W}
\end{align}
for any $ t \ge 0 $ 
and any non-negative $ \cF_t $-measurable functional $ Z_t(X) $, 
where $ \cK(V;X) $ has been defined as \eqref{eq: cK}. 
Consequently, it holds that 
\begin{align}
\varphi_V(x) = \sW_x \sbra{ \cK(V;X) } 
\label{eq: varphi2}
\end{align}
and that 
\begin{align}
W_x^{(V)}(\d X) = \frac{1}{\varphi_V(x)} \cK(V;X) \sW_x(\d X) 
\quad \text{on $ \cF_{\infty } $}. 
\label{eq: equiv between WxV and sW}
\end{align}
\end{Thm}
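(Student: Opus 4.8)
The plan is to evaluate, in two ways, the penalised quantity $\sqrt{\pi T/2}\,W_x\sbra{Z_t(X)\,\cK_T(V;X)}$ and to let $T\to\infty$. Fix $x\in\bR$, fix $V$ satisfying \eqref{eq: V ibility}, and let $Z_t(X)$ be bounded and $\cF_t$-measurable. Since $L_T^y(X)\uparrow L_{\infty}^y(X)$ as $T\to\infty$, monotone convergence gives $\cK_T(V;X)\downarrow\cK(V;X)$, so $\cbra{\cK_T(V;X)}_{T\ge0}$ is a decreasing family of bounded functionals to which the penalisation principle \eqref{eq: rough penal} applies, with limit functional $\cK(V;X)$ (the requirements on the family being exactly the convergences furnished by Theorem \ref{thm: penal}). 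Applying \eqref{eq: rough penal} at the starting point $x$ (treated exactly as $x=0$) gives the first evaluation,
\begin{align*}
\sqrt{\tfrac{\pi T}{2}}\,W_x\sbra{Z_t(X)\,\cK_T(V;X)}\ \tend{}{T\to\infty}\ \sW_x\sbra{Z_t(X)\,\cK(V;X)} .
\end{align*}

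For the second evaluation, for $T>t$ write $\sqrt{\pi T/2}\,W_x\sbra{Z_t\,\cK_T(V;X)}$ as the product of $\sqrt{\pi T/2}\,W_x\sbra{\cK_T(V;X)}$ and $W_x\sbra{Z_t\,\cK_T(V;X)}\big/W_x\sbra{\cK_T(V;X)}$, which is legitimate because $\varphi_V(x)>0$ forces $W_x\sbra{\cK_T(V;X)}>0$ for $T$ large. By parts (i) and (iii) of Theorem \ref{thm: penal} these two factors converge respectively to $\varphi_V(x)$ and to $W_x\sbra{Z_t\,M_t^{(V)}(X)}$; since $M_t^{(V)}(X)=\varphi_V(X_t)\,\varphi_V(x)^{-1}\,\cK_t(V;X)$ under $W_x$, the product tends to $W_x\sbra{Z_t\,\varphi_V(X_t)\,\cK_t(V;X)}$. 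Comparing with the displayed limit yields \eqref{eq: local equiv sW and W} for bounded $Z_t$; since both sides are increasing limits along $Z_t\wedge n$, monotone convergence extends it to every non-negative $\cF_t$-measurable $Z_t$.

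The two stated consequences are then immediate. Taking $Z_t\equiv1$ and $t=0$ gives $\sW_x\sbra{\cK(V;X)}=\varphi_V(x)$, which is \eqref{eq: varphi2}. Combining \eqref{eq: local equiv sW and W} with the definition \eqref{eq: equiv between WxV and Wx} of $M_t^{(V)}$ shows that, for every $t$ and every non-negative $\cF_t$-measurable $Z_t$, $W_x^{(V)}\sbra{Z_t}=\varphi_V(x)^{-1}\sW_x\sbra{Z_t\,\cK(V;X)}$; hence the probability measure $W_x^{(V)}$ and the measure $\varphi_V(x)^{-1}\,\cK(V;X)\,\sW_x(\d X)$ — which has total mass $\varphi_V(x)^{-1}\sW_x\sbra{\cK(V;X)}=1$ by \eqref{eq: varphi2} — agree on every $\cF_t$, hence on $\cF_{\infty}=\sigma(\cup_t\cF_t)$ by a monotone class argument. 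This is \eqref{eq: equiv between WxV and sW}.

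The step that carries all the weight is the very first one: that the concretely defined $\sW$ of \eqref{eq: def of sW}, recentered at an arbitrary $x$, is indeed the penalisation limit of the Feynman--Kac family, i.e. that \eqref{eq: rough penal} is available in the form used above. For a proof not relying on this, one argues directly from \eqref{eq: def of sW} by splitting $\sW_x\sbra{Z_t\,\cK(V;X)}$ according to whether the last zero $g$ of the path falls before or after time $t$: on $\cbra{g\le t}$ the post-$g$ part is a three-dimensional Bessel process, for which one needs a Feynman--Kac formula expressing $R_a^+\sbra{\cK(V;X)}$ through the appropriate solution of the Sturm--Liouville equation \eqref{eq: SLeq1} on the half-line (obtained via It\^o's formula and the $h$-transform presenting the Bessel process as Brownian motion conditioned to stay positive), whereas on $\cbra{g>t}$ one integrates the explicit Brownian-bridge densities against $\d u/\sqrt{2\pi u}$; reassembling the two contributions so that $\varphi_V$ and the right-hand side $W_x\sbra{Z_t\,\varphi_V(X_t)\,\cK_t(V;X)}$ emerge is where the bookkeeping is heaviest.
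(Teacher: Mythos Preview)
The paper does not supply its own proof of this theorem: it is quoted from \cite{NRY2} and used as a black box throughout. So there is nothing in the paper to compare your argument against line by line.

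On the substance of your proposal, the main argument is circular. Your ``first evaluation'' invokes \eqref{eq: rough penal}, but in this paper \eqref{eq: rough penal} is only a heuristic slogan from the introduction (``Brownian penalisations can be explained \emph{roughly} as follows''); its precise content for Feynman--Kac weights is exactly the pair Theorem~\ref{thm: penal} $+$ Theorem~\ref{thm: NRY1}. Theorem~\ref{thm: penal} alone gives you the second evaluation (the $W_x$ side), but it says nothing about $\sW_x$; the statement that the penalised limit coincides with an $\sW_x$-integral is precisely \eqref{eq: local equiv sW and W}, the identity you are trying to prove. So the displayed convergence
\[
\sqrt{\tfrac{\pi T}{2}}\,W_x\!\left[Z_t(X)\,\cK_T(V;X)\right]\ \longrightarrow\ \sW_x\!\left[Z_t(X)\,\cK(V;X)\right]
\]
cannot be obtained from anything in the paper short of the theorem itself.

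You correctly diagnose this in your final paragraph, and the direct route you sketch there---compute $\sW_x[Z_t\,\cK(V;X)]$ from the definition \eqref{eq: def of sW}, split on $\{g\le t\}$ versus $\{g>t\}$, use the Bessel/$h$-transform representation and the Sturm--Liouville characterisation of $\varphi_V$---is essentially how the result is obtained in \cite{NRY2}. But that paragraph is only an outline: the actual identification of the Bessel contribution with $\varphi_V$ and the recombination of the two pieces into $W_x[Z_t\,\varphi_V(X_t)\,\cK_t(V;X)]$ is where the real work lies, and it is not done here. As written, the proposal does not constitute a proof.
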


The following theorem can be found in \cite[p.6, Point v) and Thm.1.1.6]{NRY2}; 
see also \cite[Thm.5.1]{YYY}. 

\begin{Thm}[\cite{NRY2}] \label{thm: NRY2}
The following statements hold: 
\subitem {\rm (i)} 
$ \displaystyle \sW(g(X) \in \d u) = \frac{\d u}{\sqrt{2 \pi u}} $ 
on $ [0,\infty ) $. 
\\
In particular, 
$ \sW $ is $ \sigma $-finite on $ \cF_{\infty } $; 
\subitem {\rm (ii)} 
For $ A \in \cF_t $ with $ 0<t<\infty $, 
$ \displaystyle \sW(A) = 
\begin{cases}
0 \ & \text{if} \ W(A)=0 , \\
\infty \ & \text{if} \ W(A)>0 . 
\end{cases}
$ 
\\
In particular, $ \sW $ is {\em not} $ \sigma $-finite on $ \cF_t $. 
\end{Thm}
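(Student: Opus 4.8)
\emph{Proof sketch.} The plan is to prove (i) directly from the definition \eqref{eq: defsW} and to deduce (ii) from Theorem \ref{thm: NRY1} by specialising to $ V = \lambda\delta_0 $ and letting $ \lambda\downarrow0 $. For (i): under $ R $ the symmetrised $ 3 $-dimensional Bessel process starts at $ 0 $ and, being transient, satisfies $ X_s\neq0 $ for all $ s>0 $ almost surely, and hence carries no local time at $ 0 $. Consequently, under the concatenation $ \Pi^{(u)}\bullet R $, the path vanishes at time $ u $ (the bridge endpoint) but at no later time, so $ g(X)=u $ almost surely. Integrating over $ u $ against $ \frac{\d u}{\sqrt{2\pi u}} $ gives $ \sW(g(X)\in\d u)=\frac{\d u}{\sqrt{2\pi u}} $ on $ (0,\infty) $, while $ \sW(g(X)\in\{0,\infty\})=0 $ (each $ \Pi^{(u)}\bullet R $ is carried by $ \{g(X)=u\} $ with $ 0<u<\infty $, and $ \{u=0\} $ is Lebesgue-null). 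Since $ g $ is $ \cF_{\infty} $-measurable and $ \Omega=\bigcup_{n\ge1}\{1/n<g(X)<n\} $ modulo a $ \sW $-null set, with $ \sW(1/n<g(X)<n)=\int_{1/n}^{n}\frac{\d u}{\sqrt{2\pi u}}<\infty $, the measure $ \sW $ is $ \sigma $-finite on $ \cF_{\infty} $.

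For (ii): fix $ A\in\cF_t $ and apply \eqref{eq: local equiv sW and W} at $ x=0 $ with $ Z_t=\mathbf{1}_A $ and $ V=\lambda\delta_0 $, $ \lambda>0 $; by Example \ref{ex: penal}, $ \varphi_{\lambda\delta_0}(x)=\frac1\lambda+|x| $, so
\begin{align}
\sW\sbra{\mathbf{1}_A\,\e^{-\lambda L_{\infty}^0(X)}}
=W\sbra{\mathbf{1}_A\rbra{\tfrac1\lambda+|X_t|}\e^{-\lambda L_t^0(X)}}.
\label{}
\end{align}
As $ \lambda\downarrow0 $ the integrand on the right increases (monotonicity in $ \lambda $ is a one-line computation) to $ +\infty $ on $ A $, because $ \tfrac1\lambda\to\infty $ while $ L_t^0(X)<\infty $ $ W $-a.s., and stays $ 0 $ off $ A $; so monotone convergence makes the right-hand side tend to $ 0 $ if $ W(A)=0 $ and to $ +\infty $ if $ W(A)>0 $. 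On the left, one first checks $ L_{\infty}^0(X)<\infty $ $ \sW $-a.e.: under $ \Pi^{(u)}\bullet R $ the Bessel part contributes no local time at $ 0 $ (as in (i)), while the bridge part has a.s.\ finite local time on the compact interval $ [0,u] $. Hence $ \e^{-\lambda L_{\infty}^0(X)}\uparrow\mathbf{1}_{\{L_{\infty}^0(X)<\infty\}}=1 $ $ \sW $-a.e., and monotone convergence (valid for the $ \sigma $-finite measure $ \sW $) gives that the left-hand side tends to $ \sW(A) $. Comparing the two limits yields the dichotomy of (ii). Finally $ \sW $ cannot be $ \sigma $-finite on $ \cF_t $: a countable cover $ \Omega=\bigcup_n\Omega_n $ with $ \Omega_n\in\cF_t $ and $ \sW(\Omega_n)<\infty $ would force $ W(\Omega_n)=0 $ for every $ n $, contradicting $ W(\Omega)=1 $.

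The one delicate point is the limit $ \lambda\downarrow0 $ on the $ \sW $-side, i.e.\ verifying $ L_{\infty}^0(X)<\infty $ $ \sW $-a.e.\ so that the monotone limit of $ \e^{-\lambda L_{\infty}^0(X)} $ is the constant $ 1 $ and not the indicator of a proper subset; the remainder is bookkeeping with Example \ref{ex: penal} and the monotone convergence theorem. One can alternatively attack (ii) directly by splitting \eqref{eq: defsW} at $ u=t $ and noting that for $ u>t $ the law of $ \Pi^{(u)}\bullet R $ on $ \cF_t $ has density $ \sqrt{u/(u-t)}\,\e^{-X_t^2/(2(u-t))} $ against $ W $, whose integral $ \int_t^{\infty}\frac{\d u}{\sqrt{2\pi u}}(\cdot) $ diverges for every value of $ X_t $; but the contribution of $ u\le t $ is handled more transparently via Theorem \ref{thm: NRY1}.
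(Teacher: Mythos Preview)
Your proof is correct and follows essentially the same route as the paper's: part (i) is read off from the definition \eqref{eq: defsW}, and part (ii) is obtained by specialising \eqref{eq: local equiv sW and W} to $V=\lambda\delta_0$ and letting $\lambda\downarrow0$. The only noteworthy difference is in the $W(A)>0$ branch of (ii): you pass to the limit on both sides of the identity simultaneously, which is why you must first verify $L^0_\infty(X)<\infty$ $\sW$-a.e.\ so that the left-hand side converges to $\sW(A)$. The paper avoids this step for that branch by inserting the trivial bound $\sW(A)\ge\sW\sbra{1_A\,\e^{-\lambda L^0_\infty}}$ \emph{before} applying \eqref{eq: local equiv sW and W}, and then only letting $\lambda\downarrow0$ on the $W$-side. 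The finiteness of $L^0_\infty$ is, however, still implicitly used in the paper's $W(A)=0$ branch (to conclude $\sW(A)=0$ from $\sW\sbra{1_A\,\e^{-L^0_\infty}}=0$), so your explicit verification is not superfluous.
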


We give the proof for completeness of this paper. 

\begin{proof}
Claim (i) is obvious by definition \eqref{eq: def of sW} of $ \sW $. 
Let us prove Claim (ii). 
Let $ 0<t<\infty $. 
Suppose that $ A \in \cF_t $ and $ W(A)=0 $. 
Then we have $ \sW[1_A \cK(\delta_0;X)]=0 $ by \eqref{eq: local equiv sW and W}, 
which implies that $ \sW(A)=0 $. 
Suppose in turn that $ A \in \cF_t $ and $ W(A)>0 $. 
For $ \lambda>0 $, we apply \eqref{eq: local equiv sW and W} for $ V= \lambda \delta_0 $ 
and we have 
\begin{align}
\sW(A) 
\ge 
\sW \sbra{ 1_A \e^{ - \lambda L^0_{\infty } } } 
= W \sbra{ 1_A \rbra{ \frac{1}{\lambda} + |X_t| } \e^{ - \lambda L^0_t } } 
\ge \frac{1}{\lambda} W \sbra{ 1_A \e^{ - \lambda L^0_t } } . 
\label{}
\end{align}
Letting $ \lambda \to 0+ $, we obtain, by the monotone convergence theorem, that 
$ W[1_A \e^{ - \lambda L^0_t }] \to W(A) > 0 $, 
and consequently, that $ \sW(A) = \infty $. 
\end{proof}

We also need the following property. 

\begin{Prop} \label{thm: tau0}
For $ x \in \bR $, it holds that 
\begin{align}
\sW_x(\tau_0(X)=\infty ) = |x| . 
\label{}
\end{align}
\end{Prop}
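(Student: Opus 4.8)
The plan is to compute $\sW_x(\tau_0(X) = \infty)$ by exploiting the explicit description of $\sW$ from its definition \eqref{eq: defsW}, combined with the known behaviour of the symmetrized $3$-dimensional Bessel process, and then to patch this together with Theorem \ref{thm: NRY1} (which controls $\sW_x$ on $\cF_t$) to handle the question of whether the path hits $0$ at all. First I would observe that under $\sW_x$, the process is $x + X$ where $X$ is distributed according to $\sW$; by the description of $\sW$ recalled after \eqref{eq: def of sW}, conditionally on $g(X) = u$, the path $(X_s : s \le u)$ is a Brownian bridge of length $u$ from $0$ to $0$, and $(\theta_u X)_s$ is a symmetrized $3$-BES. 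So $\sW_x(\tau_0(X) = \infty)$ is the $\sW$-measure of the set of paths $X$ such that $x + X$ never vanishes. Since $X$ itself returns to $0$ at all times up to $g(X)$, the shifted path $x + X$ will vanish unless the Brownian bridge piece of length $u$ plus the shift $x$ stays away from $0$ on $[0,u]$ \emph{and} the symmetrized Bessel piece shifted by $x$ also stays away from $0$ on $[u, \infty)$.

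The cleaner route, though, is to use \eqref{eq: local equiv sW and W} with a well-chosen $V$, or rather a limiting version of it. The key idea: $\{\tau_0(X) = \infty\}$ is not an $\cF_t$-event, but it can be reached by a limiting procedure. I would take $V = \lambda \delta_0$ and use Example \ref{ex: penal}: by \eqref{eq: varphi2}, $\varphi_{\lambda\delta_0}(x) = \sW_x[\cK(\lambda\delta_0; X)] = \sW_x[\e^{-\lambda L^0_\infty(X)}] = \tfrac{1}{\lambda} + |x|$. Now I would try to let $\lambda \to \infty$. On the event $\{\tau_0(X) = \infty\}$ we have $L^0_\infty(X) = 0$, so $\e^{-\lambda L^0_\infty} = 1$ there; on the complement $\{\tau_0(X) < \infty\}$, provided $L^0_\infty > 0$ a.e., we get $\e^{-\lambda L^0_\infty} \to 0$. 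The subtlety is whether $L^0_\infty(X) > 0$ whenever $X$ hits $0$ — under $\sW_x$ one must check this holds $\sW_x$-a.e. on $\{\tau_0 < \infty\}$; this should follow from the structure of $\sW$ (after hitting $0$ the path behaves, in a suitable sense, like Brownian motion near $0$, which spends positive local time there, and the Bessel legs are transient but the bridge/excursion structure near a zero creates positive local time). Granting this, dominated or monotone convergence gives $\sW_x[\e^{-\lambda L^0_\infty}] \to \sW_x(\tau_0 = \infty)$, while the left side equals $\tfrac{1}{\lambda} + |x| \to |x|$, yielding the claim.

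The main obstacle I anticipate is precisely the justification that $L^0_\infty(X) > 0$ holds $\sW_x$-almost everywhere on $\{0 < \tau_0(X) < \infty\}$, i.e.\ that the measure $\sW_x$ does not charge paths which touch $0$ only at isolated instants with zero local time. One must argue this from the explicit construction: after the path first reaches $0$ (say this happens at a time before the last-exit time $g$, or the shift causes the bridge leg to reach $-x$...), the Markov property for $\{(X_t), \sW\}$ recalled in \eqref{eq: exit law}, together with the fact that $\sW_y$ for $y \neq 0$ has a Brownian-motion-like transition up to finite times and that Brownian motion started near $0$ accumulates positive local time at $0$ a.s., forces $L^0_\infty > 0$. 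Alternatively one can avoid the limiting argument entirely: condition on $g(X) = u$ and on the value $X_u = x + (\text{bridge endpoint})$; but since the bridge ends at $0$, after shifting by $x \ne 0$ the concatenation point is at height $x \ne 0$, and one reduces to: (bridge of length $u$ shifted by $x$ avoids $0$ on $[0,u]$) AND (symmetrized $3$-BES shifted by $x$ avoids $0$ on $[0,\infty)$, an event of probability $1 - (\text{something})$ by the standard $3$-BES hitting formula $R^+_a(\tau_0 < \infty) \cdot \text{sign stuff}$), then integrate $\tfrac{\d u}{\sqrt{2\pi u}}$ against the bridge-avoidance probability; a direct computation using the reflection-principle formula for the bridge should collapse to $|x|$. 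I would present the limiting argument as the main proof and mention the direct computation as a cross-check.
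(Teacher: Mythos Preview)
Your limiting argument $\lambda\to\infty$ in $\sW_x[\e^{-\lambda L^0_\infty}]=\tfrac1\lambda+|x|$ is correct and does yield $\sW_x(L^0_\infty=0)=|x|$ by monotone convergence; the gap you flag, namely that $\sW_x(\tau_0<\infty,\,L^0_\infty=0)=0$, can indeed be closed via the strong Markov property of Theorem~\ref{thm: Markov} at the bounded stopping time $\tau_0\wedge n$ together with the observation $\sW(L^0_\infty=0)=0$ (immediate from the bridge leg in \eqref{eq: defsW}). The paper, however, takes a shorter path that sidesteps this issue entirely: it fixes $V=\delta_0$ (no limit in $\lambda$), notes that $\cK(\delta_0;X)=1$ on $\{\tau_0=\infty\}$, and reads off from \eqref{eq: equiv between WxV and sW} that $\sW_x(\tau_0=\infty)=\varphi_{\delta_0}(x)\,W_x^{(\delta_0)}(\tau_0=\infty)$; the right-hand probability is then computed from the explicit scale function $\gamma_{\delta_0}(y)=y/(1+y)$ of the transient diffusion in Theorem~\ref{thm: penal}(v), giving $(1+x)\cdot\frac{x/(1+x)-0}{1-0}=x$. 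Your route trades the scale-function computation for a limit plus a local-time positivity check; the paper's route trades that check for the one-line scale computation and is arguably cleaner since it never leaves the framework of a single fixed penalisation.
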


\begin{proof}
By symmetry, we have only to prove the claim for $ x \ge 0 $. 
Let $ V=\delta_0 $ and $ F(X) = 1_{\{ \tau_0(X)=\infty \}} $. 
Note that $ L_{\infty }^0(X) = 0 $ if $ \tau_0(X)=\infty $. 
Hence it follows from Example \ref{ex: penal} and Theorem \ref{thm: NRY1} that 
\begin{align}
\sW_x(\tau_0(X)=\infty ) = \varphi_{\delta_0}(x) W_x^{(\delta_0)}(\tau_0(X)=\infty ) . 
\label{}
\end{align}
Since $ \varphi_{\delta_0}(x) = 1 + x $ 
and since $ \gamma_{\delta_0}(x) = \frac{x}{1+x} $, we have 
\begin{align}
\sW_x(\tau_0(X)=\infty ) 
= (1 + x) \cdot 
\frac{\gamma_{\delta_0}(x)-\gamma_{\delta_0}(0)}{\gamma_{\delta_0}(\infty )-\gamma_{\delta_0}(0)} 
= x . 
\label{}
\end{align}
The proof is complete. 
\end{proof}

\subsection{Markov property of $ \{ (X_t),(\cF_t),(\sW_x) \} $} \label{sec: Markov}

We may say that 
$ \{ (X_t),(\cF_t),(\sW_x) \} $ possesses 
Markov property in the following sense. 

\begin{Thm}[\cite{NRY1} and \cite{NRY2}] \label{thm: Markov}
Let $ x \in \bR $ and $ t \ge 0 $. 
Let $ F $ be a non-negative $ \cF_{\infty } $-measurable functional. 
Then it holds that 
\begin{align}
\sW_x[ Z_t(X) F(\theta_t X) ] = W_x \sbra{ Z_t(X) \sW_{X_t}[F(\cdot)] } 
\label{eq: Mp}
\end{align}
for any non-negative $ \cF_t $-measurable functional $ Z_t(X) $. 
Moreover, the constant time $ t $ in \eqref{eq: Mp} 
may be replaced by any finite $ (\cF_t) $-stopping time $ \tau $. 
\end{Thm}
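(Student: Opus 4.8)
The plan is to derive the Markov property from Theorem~\ref{thm: NRY1} together with the ordinary Markov property of Brownian motion: the point is that \eqref{eq: local equiv sW and W} already encodes a kind of disintegration of $\sW_x$ relative to the future of the Feynman--Kac functional $\cK(V;X)$. The obstruction is that the functionals $\cK(V;\cdot)$, with $V$ satisfying \eqref{eq: V ibility}, are far from generating $\cF_{\infty}$, so \eqref{eq: Mp} cannot simply be read off \eqref{eq: local equiv sW and W}; the remedy is to pair such a functional with an \emph{arbitrary cylinder functional of the shifted path} — which keeps every quantity integrable — and to push the cylinder part through using the Markov property of $W$. (An alternative, more hands-on route starts from the pathwise description of $\sW$ via its last zero $g(X)$, Theorem~\ref{thm: NRY2}(i), splitting according to $\{g(X)\le t\}$ and $\{g(X)>t\}$ and invoking the Markov property of the $3$-dimensional Bessel process and of the Brownian bridge respectively; but the route below is shorter given Theorem~\ref{thm: NRY1}.)

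Concretely, fix $x\in\bR$, $t\ge 0$, a measure $V$ satisfying \eqref{eq: V ibility}, a finite $r>0$, a bounded $\cF_t$-measurable $F_t\ge 0$, and a bounded $\cF_r$-measurable functional $G$. Applying \eqref{eq: local equiv sW and W} at time $t+r$ with $Z_{t+r}(X)=F_t(X)G(\theta_t X)$, using additivity of local time in the forms $\cK_{t+r}(V;X)=\cK_t(V;X)\cK_r(V;\theta_t X)$ and $X_{t+r}=(\theta_t X)_r$, then the Markov property of $W_x$ at time $t$ (the functional $G(\theta_t X)\varphi_V(X_{t+r})\cK_r(V;\theta_t X)$ being a function of $\theta_t X$ of at most linear growth, hence $W_y$-integrable for every $y$), and finally \eqref{eq: local equiv sW and W} once more on the inner $W_{X_t}$-expectation, one obtains
\begin{align}
\sW_x\sbra{F_t(X)G(\theta_t X)\cK(V;X)}
= W_x\sbra{F_t(X)\cK_t(V;X)\,\sW_{X_t}\sbra{G(\cdot)\cK(V;\cdot)}} .
\end{align}
Splitting $\cK(V;X)=\cK_t(V;X)\cK(V;\theta_t X)$ on the left rewrites this as
\begin{align}
\sW_x\sbra{F_t(X)\cK_t(V;X)\,\cK(V;\theta_t X)\,G(\theta_t X)}
= W_x\sbra{F_t(X)\cK_t(V;X)\,\sW_{X_t}\sbra{\cK(V;\cdot)G(\cdot)}} .
\end{align}

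Now fix $F_t$ and $V$. By Remark~\ref{rem: linear bound} and \eqref{eq: varphi2}, both sides above, viewed as set functions of $G=1_B$, are finite measures on $\cF_{\infty}$ of total mass at most $\|F_t\|_{\infty}\,W_x[\varphi_V(X_t)]<\infty$; since they agree on the $\pi$-system of cylinder sets they agree on all of $\cF_{\infty}$, so that
\begin{align}
\sW_x\sbra{F_t(X)\cK_t(V;X)\,\cK(V;\theta_t X)\,1_B(\theta_t X)}
= W_x\sbra{F_t(X)\cK_t(V;X)\,\sW_{X_t}\sbra{\cK(V;\cdot)1_B}}
\end{align}
for every $B\in\cF_{\infty}$. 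Taking $V=n^{-1}\delta_0$ and letting $n\to\infty$, the weights $\cK_t(n^{-1}\delta_0;X)$, $\cK(n^{-1}\delta_0;\theta_t X)$ and $\cK(n^{-1}\delta_0;\cdot)$ increase to $1$ off $\{L_t^0=\infty\}$ and $\{L_{\infty}^0=\infty\}$; the former is $\sW_x$-negligible (indeed empty for continuous paths of finite local time), and the latter is negligible under $\sW_x$ and under $\sW_y$ for every $y$, by the construction of $\sW$ (a Brownian bridge accumulates finite local time at $0$ and the symmetrized $3$-dimensional Bessel process accumulates none). Monotone convergence then gives \eqref{eq: Mp} for $F=1_B$, and so, by linearity and monotone convergence, for every non-negative $\cF_{\infty}$-measurable $F$ and every non-negative $\cF_t$-measurable $Z_t$.

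For a finite $(\cF_t)$-stopping time $\tau$ one argues in the usual way: approximate $\tau$ from above by stopping times $\tau_n\downarrow\tau$ taking countably many values, apply the constant-time identity on each level set $\{\tau_n=c\}$ and sum, then pass to the limit using continuity of paths together with the continuity of $y\mapsto\sW_y[F]$ — which holds, via \eqref{eq: local equiv sW and W} and the Feller property of $W$, for $F$ of the form $\cK(V;\cdot)G(\cdot)$ with $G$ bounded continuous — plus dominated convergence after localising in $\tau$; a final monotone-class step removes the restriction on $F$. The main obstacle is the one already flagged: the class $\{\cK(V;\cdot)\}$ is not rich enough to determine the conditional law of $\theta_t X$, so a cylinder functional of the shift has to be inserted by hand and carried through by the Markov property of $W$, and one must then attend to the $\sigma$-finiteness bookkeeping — verifying that the auxiliary measures are finite and that the $\cK(V;\cdot)$-weights can legitimately be removed. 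Once the correct test functionals are available, the Markov-property computation itself is just a few applications of \eqref{eq: local equiv sW and W} and of the Markov property of $W$.
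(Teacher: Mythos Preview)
Your proof is correct and follows essentially the same strategy as the paper: derive the weighted identity $\sW_x[Z_t\,\cK_t(V;X)\,F(\theta_t X)\,\cK(V;\theta_t X)]=W_x[Z_t\,\cK_t(V;X)\,\sW_{X_t}[F\,\cK(V;\cdot)]]$ from Theorem~\ref{thm: NRY1} plus a Markov property, then let $V=\lambda\delta_0\to 0$ via monotone convergence. The one execution difference is that the paper routes the Markov step through the \emph{probability} measure $W_x^{(V)}$ (equations \eqref{eq: equiv between WxV and sW} and \eqref{eq: equiv between WxV and Wx}), which immediately handles an arbitrary non-negative $\cF_{\infty}$-measurable $F$ and so avoids your cylinder-functional / monotone-class extension; for the stopping-time case the paper likewise just invokes the strong Markov property of $W_x^{(V)}$ rather than approximating $\tau$.
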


\begin{proof}
Let $ V $ be as in Theorem \ref{thm: penal}. 
Then we have 
\begin{align}
& \sW_x[Z_t(X) \cK_t(V;X) F(\theta_t X) \cK(V;\theta_t X)] 
\label{} \\
=& \sW_x[Z_t(X) F(\theta_t X) \cK(V;X)] 
\quad \text{(by the multiplicativity property of $ \cK(V;X) $)}
\label{} \\
=& \varphi_V(x) W_x^{(V)} \sbra{ Z_t(X) F(\theta_t X) } 
\quad \text{(by \eqref{eq: equiv between WxV and sW})}
\label{} \\
=& \varphi_V(x) W_x^{(V)} \sbra{ Z_t(X) W_{X_t}^{(V)}[F(\cdot)] } 
\quad \text{(by the Markov property of $ W_{\cdot}^{(V)} $)}
\label{} \\
=& \varphi_V(x) W_x \sbra{ Z_t(X) W_{X_t}^{(V)}[F(\cdot)] 
\cdot \frac{\varphi_V(X_t)}{\varphi_V(X_0)} \cK_t(V;X) } 
\quad \text{(by \eqref{eq: equiv between WxV and Wx})}
\label{} \\
=& W_x \sbra{ Z_t(X) \cK_t(V;X) 
\sW_{X_t} \sbra{ F(\cdot) \cK(V;\cdot) } } 
\quad \text{(by \eqref{eq: equiv between WxV and sW}).}
\label{}
\end{align}
Taking $ V=\lambda \delta_0 $ and letting $ \lambda \to 0+ $, 
we obtain \eqref{eq: Mp} by the monotone convergence theorem. 
In the same way, we can prove \eqref{eq: Mp} 
also in the case where the constant time $ t $ is replaced by a finite stopping time $ \tau $. 
\end{proof}

Since the measure $ \sW_x $ has infinite total mass, 
we cannot consider conditional expectation in the usual sense. 
But, by the help of Theorem \ref{thm: Markov}, 
we can introduce a counterpart in the following sense. 

\begin{Cor}[\cite{NRY1} and \cite{NRY2}; see also \cite{YYY}]
Let $ x \in \bR $ and $ t \ge 0 $. 
Let $ F $ be a $ \cF_{\infty } $-measurable functional 
which is in $ L^1(\sW_x) $. 
Then there exists a unique $ \{ (\cF_t),W_x \} $-martingale 
$ M_t[F;X] $ such that 
\begin{align}
\sW_x[ Z_t(X) F(X) ] = W_x[Z_t(X) M_t[F;X]] 
\label{}
\end{align}
for any bounded $ \cF_t $-measurable functional $ Z_t(X) $. 
Moreover, it is given as 
\begin{align}
M_t[F;X] = \int_{\Omega} \sW_{X_t}(\d Y) F(X^{(t)} \bullet Y) 
\quad \text{$ W_x(\d X) $-a.s.} 
\label{eq: Mp1}
\end{align}
\end{Cor}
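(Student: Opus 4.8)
The plan is to build $M_t[F;X]$ explicitly via the formula \eqref{eq: Mp1} and then verify both the defining identity and uniqueness. First I would define, for $W_x(\d X)$-a.e.\ path $X$,
\begin{align}
M_t[F;X] = \int_{\Omega} \sW_{X_t}(\d Y)\, F\bigl(X^{(t)} \bullet Y\bigr) = \sW_{X_t}\bigl[ F(X^{(t)} \bullet \cdot) \bigr] .
\label{}
\end{align}
Here $X^{(t)} \bullet Y$ denotes the path which agrees with $X$ on $[0,t]$ and continues as $Y_{\cdot-t}$ afterwards (this is well-defined since $X_t$ is the starting point of $Y$ under $\sW_{X_t}$, so the concatenation never falls into the ``frozen'' case); note that $F(\theta_t(X^{(t)}\bullet Y)) = F(Y)$ is not quite what appears — rather one should read $G(\theta_t X) := F(X)$ restricted appropriately — so I would phrase things so that applying $F$ to the concatenated path is literally evaluating the functional $F$, which only sees $X$ through $(X_s:s\le t)$ and the post-$t$ increments. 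The key input is Theorem \ref{thm: Markov}: writing $F(X) = F(X^{(t)} \bullet \theta_t X)$ and using \eqref{eq: Mp} with the $\cF_t$-functional $Z_t(X)$ gives, for $F \in L^1(\sW_x)$,
\begin{align}
\sW_x[ Z_t(X) F(X) ] = W_x\bigl[ Z_t(X)\, \sW_{X_t}[ F(X^{(t)} \bullet \cdot) ] \bigr] = W_x\bigl[ Z_t(X)\, M_t[F;X] \bigr] ,
\label{}
\end{align}
which is exactly the desired relation. (Strictly, Theorem \ref{thm: Markov} is stated for nonnegative $F$; for $F \in L^1(\sW_x)$ I would apply it separately to $F^+$ and $F^-$, each of which is in $L^1(\sW_x)$, and subtract. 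That both $M_t[F^\pm;X]$ are $W_x$-a.s.\ finite follows from taking $Z_t \equiv 1$ in the identity, which gives $W_x[M_t[F^\pm;X]] = \sW_x[F^\pm] < \infty$.)

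Next I would check the martingale property. For $0 \le s \le t$ and a bounded $\cF_s$-measurable $Z_s$, apply the identity just established at time $t$ and then at time $s$:
\begin{align}
W_x\bigl[ Z_s(X) M_t[F;X] \bigr] = \sW_x[ Z_s(X) F(X) ] = W_x\bigl[ Z_s(X) M_s[F;X] \bigr] ,
\label{}
\end{align}
where in the first equality I used that $Z_s$ is also $\cF_t$-measurable. Since this holds for all bounded $\cF_s$-measurable $Z_s$ and both $M_t[F;X]$, $M_s[F;X]$ are in $L^1(W_x)$, we get $W_x[M_t[F;X] \mid \cF_s] = M_s[F;X]$, i.e.\ $(M_t[F;X])_{t\ge 0}$ is a $\{(\cF_t),W_x\}$-martingale. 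Uniqueness is then immediate: if $N_t$ is another such martingale, then $W_x[Z_t(X)(M_t[F;X]-N_t)]=0$ for all bounded $\cF_t$-measurable $Z_t$, forcing $M_t[F;X]=N_t$ $W_x$-a.s.\ for each fixed $t$; taking a countable dense set of times and using right-continuity of both martingales (after passing to càdlàg modifications) gives indistinguishability.

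The main obstacle — really the only delicate point — is the measure-theoretic bookkeeping around the concatenation map and the application of Theorem \ref{thm: Markov} to a merely $L^1$ (not bounded, not nonnegative) functional. One must make sure that $(X,Y) \mapsto F(X^{(t)}\bullet Y)$ is jointly measurable so that $\sW_{X_t}[F(X^{(t)}\bullet\cdot)]$ makes sense as a $W_x$-a.s.\ defined $\cF_t$-measurable random variable; this follows from measurability of the concatenation map $\Omega^{(t)} \times \Omega \to \Omega$ together with a monotone-class argument reducing to $F$ of the form $Z_t(X) G(\theta_t X)$, for which Theorem \ref{thm: Markov} applies directly. I would also remark that the decomposition into $F^+ - F^-$ requires $F^\pm \in L^1(\sW_x)$, which is automatic since $|F| = F^+ + F^- \in L^1(\sW_x)$; and that the a.s.\ finiteness of $M_t[F;X]$ for each $t$ — needed before one can even speak of the martingale property — comes from the $Z_t \equiv 1$ case as noted above. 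No further estimates are needed; everything reduces to Theorem \ref{thm: Markov} and Fubini-type manipulations.
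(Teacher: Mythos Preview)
Your proposal is correct and is exactly the argument the paper has in mind: the Corollary is stated without proof, as an immediate consequence of the Markov property (Theorem \ref{thm: Markov}), and your derivation---extend \eqref{eq: Mp} from functionals of the form $Z_t(X)G(\theta_t X)$ to general $F\in\cF_\infty$ via monotone class, split $F=F^+-F^-$ for the $L^1$ case, then read off the martingale property and uniqueness from the defining identity---is precisely that deduction made explicit. The only cosmetic point is that the paper's statement asks for uniqueness at each fixed $t$ (i.e.\ up to modification), so your final remark about c\`adl\`ag versions and indistinguishability, while harmless, is more than is required.
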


\begin{Rem}
If $ F \in L^1(W_x) $, then the family of the conditional expectations 
$ \{ W_x[F|\cF_t] : t \ge 0 \} $ is a uniformly integrable martingale. 
In contrast with this fact, if $ F \in L^1(\sW_x) $, 
the martingale $ \{ M_t[F;X] : t \ge 0 \} $ under $ W_x $ converges to 0 as $ t \to \infty $, 
and consequently, it is not uniformly integrable. 
\end{Rem}

\begin{Rem}
Since $ M_t $ is an operator from $ L^1(\sW_x) $ to $ L^1(W_x) $, 
we do not have a counterpart of the tower property for the usual conditional expectation. 
\end{Rem}

\begin{Ex}
Let $ V $ be a non-negative measure on $ \bR $ satisfying \eqref{eq: V ibility}. 
Then (iv) and (v) of Theorem \ref{thm: penal} may be rewritten as 
\begin{align}
M_t \sbra{ \cK(V;\cdot) ; X } 
= \varphi_V(X_t) \cK_t(V;X) . 
\label{}
\end{align}
From this and from Remark \ref{rem: linear bound}, we see that 
\begin{align}
M_t \sbra{ \cK(V;\cdot) ; X } 
\in L^p(W) 
\quad \text{for any $ p \ge 1 $}. 
\label{eq: LpW}
\end{align}
In particular, formula \eqref{eq: Mslambdadelta0} may be rewritten as 
\begin{align}
M_t \sbra{ \cK(\lambda \delta_0;\cdot) ; X } 
= \rbra{ \frac{1}{\lambda} + |X_t| } \cK_t(\lambda \delta_0;X) . 
\label{eq: sWdelta0}
\end{align}
\end{Ex}

\section{Wiener integrals} \label{sec: Wint}

Let $ \cS $ denote 
the set of all step functions $ f $ on $ [0,\infty ) $ of the form: 
\begin{align}
f(t) = \sum_{k=1}^n c_k 1_{[t_{k-1},t_k)}(t) 
, \quad t \ge 0 
\label{eq: step func}
\end{align}
with $ n \in \bN $, 
$ c_k \in \bR $ ($ k = 1,\ldots,n $) 
and $ 0=t_0 < t_1 < \cdots < t_n < \infty $. 
Note that $ \cS $ is dense in $ L^2(\d s) $. 
For a function $ f \in \cS $ and a process $ X $, 
we define 
\begin{align}
\int_0^{\infty } f(t) \d X_t = \sum_{k=1}^n c_k (X_{t_k} - X_{t_{k-1}}) . 
\label{eq: pre Wiener integ}
\end{align}
If $ \int_0^{\infty } f(t) \d X_t $ can be defined 
as the limit in some sense 
of $ \int_0^{\infty } f_n(t) \d X_t $ 
for an approximating sequence $ \{ f_n \} $ of $ f $, 
then we will call it {\em Wiener integral} of $ f $ for the process $ X $. 

We have the following facts: 
If a sequence $ \{ f_n \} \subset \cS $ 
approximates $ f $ in $ L^2(\d s) $, then it holds that 
\begin{align}
\int_0^{\infty } f_n(s) \d X_s \tend{}{n \to \infty } \int_0^{\infty } f(s) \d X_s 
\quad \text{in $ W $-probability} 
\label{}
\end{align}
and that, for any $ u>0 $, 
\begin{align}
\int_0^u f_n(s) \d X_s \tend{}{n \to \infty } \int_0^u f(s) \d X_s 
\quad \text{in $ \Pi^{(u)} $-probability} . 
\label{}
\end{align}

\subsection{Wiener integral for 3-dimensional Bessel process}

Let $ p_t(x) $ denote the density of the Brownian semigroup: 
\begin{align}
p_t(x) = \frac{1}{\sqrt{2 \pi t}} \exp \rbra{ - \frac{x^2}{2t} } 
, \quad t>0 , \ x \in \bR . 
\label{}
\end{align}
Let $ a \ge 0 $ be fixed. 
It is well-known (see, e.g., \cite[\S VI.3]{RevuzYor}) that, for $ t>0 $ and $ x>0 $, 
\begin{align}
R^+_a (X_t \in \d x) = 
\begin{cases}
\displaystyle 
\frac{x}{a} \cbra{ p_t(x-a) - p_t(x+a) } \d x , 
& a>0 , \\
\displaystyle 
\frac{2 x^2}{t} p_t(x) \d x , 
& a=0 . 
\end{cases}
\label{}
\end{align}
From this formula, it is straightforward that, for $ t>0 $ and $ x>0 $, 
\begin{align}
\phi_a(t) := R^+_a \sbra{ \frac{1}{X_t} } = 
\begin{cases}
\displaystyle 
\frac{1}{a} \int_{-a}^a p_t(x) \d x 
, & a > 0 \\
\displaystyle 
2 p_t(0) = \sqrt{\frac{2}{\pi t}} 
, & a = 0 . 
\end{cases}
\label{eq: phia}
\end{align}
Since $ p_t(x) \le p_t(0) $, it is obvious by definition that 
\begin{align}
\phi_a(t) \le \phi_0(t) 
, \quad a>0 , \ t > 0 . 
\label{eq: monotonicity}
\end{align}
Note that $ \phi_a(t) $ has the following asymptotics as $ t \to 0+ $: 
\begin{align}
\phi_a(t) \sim 
\begin{cases}
1/a               & \text{if $ a>0 $}, \\
\sqrt{2/(\pi t)} & \text{if $ a=0 $}. 
\end{cases}
\label{}
\end{align}
By the stochastic differential equation \eqref{eq: 3BES SDE}, we see that 
\begin{align}
R^+_a[X_t] = a + \int_0^t R^+_a \sbra{ \frac{1}{X_s} } \d s 
= a + \int_0^t \phi_a(s) \d s , 
\label{}
\end{align}

Now the following lemma is obvious. 

\begin{Lem}
Let $ f \in L^2(\d s) \cap L^1(\phi_a(s) \d s) $. 
Then, according to the stochastic differential equation \eqref{eq: 3BES SDE}, 
the Wiener integral may be defined as 
\begin{align}
\int_0^{\infty } f(s) \d X_s 
= \int_0^{\infty } f(s) \d B_s + \int_0^{\infty } \frac{f(s)}{X_s} \d s . 
\label{}
\end{align}
If a sequence $ \{ f_n \} \subset \cS $ 
approximates $ f $ both in $ L^2(\d s) $ and in $ L^1(\phi_a(s) \d s) $, i.e., 
\begin{align}
\int_0^{\infty } |f_n(s)-f(s)|^2 \d s  
+ \int_0^{\infty } |f_n(s)-f(s)| \phi_a(s) \d s 
\tend{}{n \to \infty } 
0, 
\label{eq: approx seq}
\end{align}
then it holds that 
\begin{align}
\int_0^{\infty } f_n(s) \d X_s \tend{}{n \to \infty } \int_0^{\infty } f(s) \d X_s 
\quad \text{in $ R^+_a $-probability} . 
\label{}
\end{align}
\end{Lem}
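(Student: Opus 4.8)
The Lemma is essentially a bookkeeping statement: it asserts that the SDE decomposition $X_t = a + B_t + \int_0^t X_s^{-1}\,\d s$ for the $3$-dimensional Bessel process lets us split the Wiener integral of a deterministic $f$ into a true (Gaussian) Wiener integral against the driving Brownian motion plus an absolutely continuous drift correction, and that this decomposition is stable under the natural approximation by step functions. The plan is to reduce everything to two facts: the $L^2$-isometry for the Brownian-motion Wiener integral $\int_0^\infty f(s)\,\d B_s$, and an $L^1$-type bound for the drift term $\int_0^\infty f(s) X_s^{-1}\,\d s$ using exactly the weight $\phi_a(s) = R^+_a[1/X_s]$ that appears in the hypothesis.

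\textbf{Step 1: consistency of the definition on $\cS$.} First I would check that for $f \in \cS$ of the form \eqref{eq: step func}, the algebraic definition $\int_0^\infty f(s)\,\d X_s = \sum_k c_k(X_{t_k}-X_{t_{k-1}})$ agrees with $\int_0^\infty f(s)\,\d B_s + \int_0^\infty f(s) X_s^{-1}\,\d s$. This is immediate by substituting the SDE \eqref{eq: 3BES SDE} into each increment $X_{t_k}-X_{t_{k-1}} = (B_{t_k}-B_{t_{k-1}}) + \int_{t_{k-1}}^{t_k} X_s^{-1}\,\d s$ and summing; here $f\in L^1(\phi_a(s)\,\d s)$ guarantees $\int_0^\infty |f(s)| X_s^{-1}\,\d s < \infty$ in $R^+_a$-mean, since $R^+_a\big[\int_0^\infty |f(s)| X_s^{-1}\,\d s\big] = \int_0^\infty |f(s)|\phi_a(s)\,\d s$ by Tonelli.

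\textbf{Step 2: the two convergence estimates.} Given an approximating sequence $\{f_n\}\subset\cS$ satisfying \eqref{eq: approx seq}, I would estimate the difference of the two defining pieces separately. For the martingale part, $\int_0^\infty (f_n-f_m)(s)\,\d B_s$ is Gaussian with variance $\int_0^\infty |f_n-f_m|^2\,\d s \to 0$, so $\{\int_0^\infty f_n(s)\,\d B_s\}$ is Cauchy in $L^2(R^+_a)$, hence converges in $R^+_a$-probability; one defines $\int_0^\infty f(s)\,\d B_s$ as this limit (independent of the sequence, by the usual interleaving argument). For the drift part, $R^+_a\big[\big|\int_0^\infty (f_n-f)(s) X_s^{-1}\,\d s\big|\big] \le \int_0^\infty |f_n-f|(s)\phi_a(s)\,\d s \to 0$, so this piece converges in $L^1(R^+_a)$ and a fortiori in $R^+_a$-probability. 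Adding the two gives the claimed convergence $\int_0^\infty f_n(s)\,\d X_s \to \int_0^\infty f(s)\,\d X_s$ in $R^+_a$-probability, and simultaneously shows the limit equals $\int_0^\infty f(s)\,\d B_s + \int_0^\infty f(s) X_s^{-1}\,\d s$, establishing the displayed formula.

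\textbf{Main obstacle.} There is essentially no hard analytic obstacle — the lemma is, as the paper says, ``obvious'' once the right weight $\phi_a$ has been identified. The only point requiring a little care is the well-definedness of the Brownian Wiener integral $\int_0^\infty f(s)\,\d B_s$ as a limit that does not depend on the chosen approximating sequence: one must know that $\cS$ is dense in $L^2(\d s)$ (noted in the text) and run the standard argument that interleaving two approximating sequences produces a single $L^2$-Cauchy sequence whose limit is therefore common to both. Everything else is Tonelli's theorem plus the triangle inequality, so I would keep the write-up brief and simply point to these two estimates.
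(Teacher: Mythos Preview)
Your proposal is correct and is exactly the argument the paper has in mind: the paper gives no proof at all, simply declaring the lemma ``obvious'' immediately before stating it, and your Steps~1--2 (SDE decomposition on step functions, then $L^2$-isometry for the $\d B_s$-part plus the Tonelli bound $R^+_a[\int |f_n-f|/X_s\,\d s]=\int |f_n-f|\,\phi_a\,\d s$ for the drift) are precisely the routine details that justify this.
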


Following Funaki--Hariya--Yor (\cite{FHY2}), 
we may propose another way of constructing the Wiener integral. 
We define 
\begin{align}
\hat{X}^{(a)}_s = X_s - R^+_a[X_s] 
\label{}
\end{align}
and we call $ \{ (\hat{X}^{(a)}_s),R^+_a \} $ the {\em centered Bessel process}. 
We simply write $ \hat{X}_s $ for $ \hat{X}^{(0)}_s $. 
By applying Theorem \ref{thm: FHY} with $ \psi(x)=x^2 $, we obtain the following fact: 
If a sequence $ \{ f_n \} \subset \cS $ 
approximates $ f $ in $ L^2(\d s) $, then it holds that 
\begin{align}
\int_0^{\infty } f_n(s) \d \hat{X}^{(a)}_s 
\tend{}{n \to \infty } \int_0^{\infty } f(s) \d \hat{X}^{(a)}_s 
\quad \text{in $ R^+_a $-probability} . 
\label{}
\end{align}
We then obtain the following lemma. 

\begin{Lem}
Let $ f \in L^2(\d s) \cap L^1(\phi_a(s) \d s) $. 
Then it holds that 
\begin{align}
\int_0^{\infty } f(s) \d X_s 
= \int_0^{\infty } f(s) \d \hat{X}^{(a)}_s + \int_0^{\infty } f(s) \phi_a(s) \d s 
\quad \text{$ R^+_a $-a.s.}
\label{}
\end{align}
\end{Lem}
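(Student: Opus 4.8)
The plan is to combine the two previously established constructions of the Wiener integral for the 3-dimensional Bessel process. The point is that both the preceding lemma and the Funaki--Hariya--Yor approach give the integral as a limit in $R^+_a$-probability of the elementary sums $\int_0^{\infty} f_n(s) \d X_s$ for step functions $f_n$ approximating $f$, so it suffices to check that the proposed decomposition holds on the level of step functions and then pass to the limit.

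First I would fix a sequence $\{ f_n \} \subset \cS$ approximating $f$ both in $L^2(\d s)$ and in $L^1(\phi_a(s) \d s)$, which exists since $f \in L^2(\d s) \cap L^1(\phi_a(s) \d s)$ and $\cS$ is dense in both spaces (density in $L^1(\phi_a(s)\d s)$ uses that $\phi_a(s)$ is locally integrable by its asymptotics; one truncates $f$ to a finite interval and approximates there). For such $f_n$ the identity
\begin{align}
\int_0^{\infty } f_n(s) \d X_s
= \int_0^{\infty } f_n(s) \d \hat{X}^{(a)}_s + \int_0^{\infty } f_n(s) \phi_a(s) \d s
\label{}
\end{align}
is immediate from the definition $\hat{X}^{(a)}_s = X_s - R^+_a[X_s]$ together with $R^+_a[X_s] = a + \int_0^s \phi_a(r) \d r$: writing out the telescoping sum $\sum_k c_k (X_{t_k} - X_{t_{k-1}})$ and $\sum_k c_k (\hat{X}^{(a)}_{t_k} - \hat{X}^{(a)}_{t_{k-1}})$, their difference is $\sum_k c_k (R^+_a[X_{t_k}] - R^+_a[X_{t_{k-1}}]) = \sum_k c_k \int_{t_{k-1}}^{t_k} \phi_a(r) \d r = \int_0^{\infty} f_n(s) \phi_a(s) \d s$.

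Next I would let $n \to \infty$ in each of the three terms. The left-hand side converges to $\int_0^{\infty} f(s) \d X_s$ in $R^+_a$-probability by the preceding lemma; the first term on the right converges to $\int_0^{\infty} f(s) \d \hat{X}^{(a)}_s$ in $R^+_a$-probability by the fact established just before the statement (an application of Theorem \ref{thm: FHY} with $\psi(x) = x^2$, using only $f \in L^2(\d s)$); and the deterministic integrals $\int_0^{\infty} f_n(s) \phi_a(s) \d s$ converge to $\int_0^{\infty} f(s) \phi_a(s) \d s$ by the $L^1(\phi_a(s)\d s)$-approximation. Passing to the limit in probability in the displayed identity for $f_n$ yields the claimed $R^+_a$-a.s. identity (a.s.\ rather than merely in probability, since the two sides are each well-defined random variables and equality in probability of fixed random variables is equality a.s.).

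There is essentially no serious obstacle here: the statement is labelled as a lemma precisely because it is a bookkeeping consequence of the two constructions. The only point requiring a little care is the density of $\cS$ in $L^1(\phi_a(s)\d s)$ near $s=0$ when $a=0$, where $\phi_0(s) = \sqrt{2/(\pi s)}$ blows up; but $\sqrt{1/s}$ is integrable at the origin, so this causes no difficulty, and one can in any case choose the approximating sequence to be supported away from $0$ after an initial truncation. I would also note in passing that the two displayed limits (for $\int f_n \d X_s$ and for $\int f_n \d \hat X^{(a)}_s$) are compatible in the sense that one may take the \emph{same} sequence $\{f_n\}$ for both, which is what makes the term-by-term passage to the limit legitimate.
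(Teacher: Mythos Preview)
Your proposal is correct and matches the paper's approach: the paper states the lemma without proof, presenting it as an immediate consequence of the two preceding constructions (the Wiener integral via the SDE decomposition and the Funaki--Hariya--Yor construction for the centered process), which is precisely the argument you have written out. Your careful verification at the level of step functions and the term-by-term passage to the limit is exactly the intended justification.
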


\subsection{Wiener integral for $ X $ under $ \sW $}

Define 
\begin{align}
L^1_+(\sW) = \cbra{ G:\Omega \to \bR_+, \ \text{$ \cF $-measurable}, \ 
\sW(G = 0)=0 , \ \sW[G]<\infty } . 
\label{}
\end{align}
For $ G \in L^1_+(\sW) $, 
we define a probability measure $ \sW^G $ on $ (\Omega,\cF) $ by 
\begin{align}
\sW^G(A) = \frac{\sW[1_A G]}{\sW[G]} 
, \quad A \in \cF . 
\label{eq: sWG}
\end{align}
We recall the following notion of convergence. 

\begin{Prop} \label{prop: loc in meas}
Let $ Z,Z_1,Z_2,\ldots $ be $ \cF_{\infty } $-measurable functionals. 
Then the following statements are equivalent: 
\subitem {\rm (i)} 
For any $ \eps>0 $ and any $ A \in \cF $ with $ \sW(A)<\infty $, 
it holds that \\
$ \sW \rbra{ A \cap \cbra{ |Z_n-Z| \ge \eps } } \to 0 $. 
\subitem {\rm (ii)} 
$ Z_n \to Z $ in $ \sW^G $-probability for some $ G \in L^1_+(\sW) $. 
\subitem {\rm (iii)} 
$ Z_n \to Z $ in $ \sW^G $-probability for any $ G \in L^1_+(\sW) $. 
\subitem {\rm (iv)} 
One can extract, from an arbitrary subsequence, 
a further subsequence $ \{ n(k):k=1,2,\ldots \} $ along which 
$ Z_{n(k)} \to Z $ $ \sW $-a.e. 

\noindent
If one (and hence all) of the above statements holds, 
then we say that 
\begin{align}
\text{$ Z_n \to Z $ {\em locally in $ \sW $-measure}.} 
\label{}
\end{align}
\end{Prop}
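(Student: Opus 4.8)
The plan is to prove the cycle of implications (iv) $\Rightarrow$ (i) $\Rightarrow$ (iii) $\Rightarrow$ (ii) $\Rightarrow$ (iv), treating (i) as the central characterization that ties the $\sigma$-finiteness of $\sW$ together with the finite-mass renormalizations $\sW^G$. The underlying principle is that, because $\sW$ is $\sigma$-finite on $\cF_\infty$ (Theorem \ref{thm: NRY2}(i) supplies the decomposition via $g(X)$), all the usual measure-theoretic equivalences between ``convergence in measure on sets of finite measure'', ``convergence in measure for a finite measure equivalent to $\sW$'', and ``a.e.\ convergence along subsequences'' go through essentially verbatim; the only genuine content is to check that the class $L^1_+(\sW)$ is rich enough to witness (ii) and (iii).

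First I would record that for any $G \in L^1_+(\sW)$, the measure $\sW^G$ defined by \eqref{eq: sWG} is a probability measure equivalent to $\sW$ on $\cF_\infty$ (since $\sW(G=0)=0$), so ``$\sW$-a.e.'' and ``$\sW^G$-a.e.'' coincide, and a set has $\sW^G$-measure zero iff it has $\sW$-measure zero. For (iv) $\Rightarrow$ (i): fix $\eps>0$ and $A \in \cF$ with $\sW(A)<\infty$. If $\sW(A \cap \{|Z_n - Z|\ge \eps\}) \not\to 0$, pass to a subsequence along which it stays bounded below by some $\delta>0$; by (iv) extract a further subsequence along which $Z_{n(k)}\to Z$ $\sW$-a.e., hence $1_A 1_{\{|Z_{n(k)}-Z|\ge\eps\}} \to 0$ $\sW$-a.e., and since these are dominated by $1_A \in L^1(\sW)$, dominated convergence gives $\sW(A \cap \{|Z_{n(k)}-Z|\ge\eps\}) \to 0$, a contradiction. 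For (i) $\Rightarrow$ (iii): given $G \in L^1_+(\sW)$ and $\eps>0$, we must show $\sW^G(|Z_n - Z|\ge\eps)\to 0$, i.e.\ $\sW[1_{\{|Z_n-Z|\ge\eps\}}G]\to 0$. Split the integral over $\{G > M\}$ and $\{G\le M\}$: the first piece is at most $\sW[G 1_{\{G>M\}}]$, which is small uniformly in $n$ for large $M$ by integrability of $G$; on the second piece we have $\sW[1_{\{|Z_n-Z|\ge\eps\}}G 1_{\{G\le M\}}] \le M\, \sW(\{G\le M, G>0\}\cap\{|Z_n-Z|\ge\eps\})$, and the set $\{0 < G \le M\}$ has finite $\sW$-measure (it has measure $\le M^{-1}\sW[G 1_{\{G\le M\}}] < \infty$ by Markov's inequality), so (i) forces this term to $0$ as $n\to\infty$. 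The implication (iii) $\Rightarrow$ (ii) is trivial once we exhibit a single $G \in L^1_+(\sW)$; such a $G$ exists because $\sW$ is $\sigma$-finite, e.g.\ write $\Omega = \bigcup_k A_k$ with $A_k$ disjoint, $0<\sW(A_k)<\infty$, and set $G = \sum_k 2^{-k}(1+\sW(A_k))^{-1}1_{A_k}$, which is strictly positive and in $L^1(\sW)$.

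Finally, for (ii) $\Rightarrow$ (iv): given that $Z_n \to Z$ in $\sW^G$-probability for some fixed $G\in L^1_+(\sW)$, and given an arbitrary subsequence, the standard fact for finite measures yields a further subsequence along which $Z_{n(k)}\to Z$ $\sW^G$-a.e.; since $\sW^G \sim \sW$, this is the same as $\sW$-a.e.\ convergence, which is (iv). This closes the cycle.

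The main obstacle — really the only non-routine point — is the finite-measure bookkeeping in (i) $\Rightarrow$ (iii), namely arranging that the ``bad'' event on which $G$ could contribute mass is trapped inside a set of \emph{finite} $\sW$-measure so that hypothesis (i) applies; the truncation $\{0<G\le M\}$ handles this, at the cost of the uniform tail estimate $\sup_n \sW[G 1_{\{G>M\}}]\to 0$ as $M\to\infty$, which is immediate from $G\in L^1(\sW)$. Everything else is the classical equivalence between modes of convergence for ($\sigma$-)finite measures, transported through the equivalence $\sW^G \sim \sW$.
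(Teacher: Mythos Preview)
Your cycle (iv) $\Rightarrow$ (i) $\Rightarrow$ (iii) $\Rightarrow$ (ii) $\Rightarrow$ (iv) is the right shape, and three of the four implications are fine. (The paper itself does not prove this proposition; it refers to \cite{Y1}.) However, the step (i) $\Rightarrow$ (iii) contains a genuine error: you assert that the set $\{0 < G \le M\}$ has finite $\sW$-measure, bounding it by $M^{-1}\sW[G 1_{\{G\le M\}}]$ ``by Markov's inequality''. This inequality is backwards. What one actually has is $\sW[G 1_{\{0<G\le M\}}] \le M\,\sW(0<G\le M)$, which gives a \emph{lower} bound for $\sW(0<G\le M)$, not an upper one. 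And indeed the set typically has infinite measure: take for instance $G = \e^{-g(X)}$, which lies in $L^1_+(\sW)$ and is bounded by $1$, so that $\{0<G\le M\} = \Omega$ for every $M\ge 1$, and $\sW(\Omega)=\infty$. Thus you cannot invoke hypothesis (i) on this set.

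The fix is to truncate from \emph{below} as well. Split $G = G 1_{\{G>M\}} + G 1_{\{G\le \delta\}} + G 1_{\{\delta<G\le M\}}$. The first piece contributes at most $\sW[G 1_{\{G>M\}}]$, small for large $M$; the second at most $\sW[G 1_{\{0<G\le\delta\}}]$, which tends to $0$ as $\delta\downarrow 0$ by dominated convergence (both bounds uniform in $n$). For the middle piece, the set $\{\delta<G\le M\}$ \emph{does} have finite $\sW$-measure, by Markov in the correct direction: $\sW(G>\delta)\le \delta^{-1}\sW[G]<\infty$. Then $\sW[G 1_{\{\delta<G\le M\}}1_{\{|Z_n-Z|\ge\eps\}}] \le M\,\sW(\{\delta<G\le M\}\cap\{|Z_n-Z|\ge\eps\}) \to 0$ by (i). With this correction your argument goes through.
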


For the proof of Proposition \ref{prop: loc in meas}, see, e.g., \cite{Y1}.

Wiener integral for $ X $ under $ \sW(\d X) $ 
may be defined with the help of the following theorem. 

\begin{Thm}[\cite{Y1}] \label{thm: approx}
Let $ f \in L^2(\d s) \cap L^1(\frac{\d s}{1 + \sqrt{s}}) $. 
Suppose that a sequence $ \{ f_n \} \subset \cS $ approximates $ f $ 
both in $ L^2(\d s) $ and in $ L^1(\frac{\d s}{1 + \sqrt{s}}) $, i.e., 
\begin{align}
\int_0^{\infty } |f_n(s)-f(s)|^2 \d s 
+ \int_0^{\infty } |f_n(s)-f(s)| \frac{\d s}{1 + \sqrt{s}} 
\tend{}{n \to \infty } 
0. 
\label{}
\end{align}
(Note that this condition is strictly weaker than the condition \eqref{eq: approx seq}.) 
Then it holds that 
\begin{align}
\int_0^{\infty } f_n(s) \d X_s 
\tend{}{n \to \infty } 
\int_0^{\infty } f(s) \d X_s 
\quad \text{locally in $ \sW $-measure} . 
\label{}
\end{align}
Moreover, there exists a functional $ J(f;u,X) $ 
measurable with respect to the product $ \sigma $-field 
$ \cB([0,\infty )) \otimes \cF_{\infty } $ such that 
\begin{align}
\int_0^{\infty } f(s) \d X_s 
= J(f;g(X),X) 
\quad \text{$ \sW $-a.e.}
\label{}
\end{align}
and that it holds $ \d u $-a.e. that 
\begin{align}
J(f;u,X^{(u)} \bullet Y) = \int_0^u f(s) \d X_s + \int_0^{\infty } f(s+u) \d Y_s 
\label{}
\end{align}
is valid a.e. with respect to $ \Pi^{(u)}(\d X^{(u)}) \otimes R(\d Y) $. 
\end{Thm}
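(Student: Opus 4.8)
The strategy is to use the disintegration \eqref{eq: defsW} of $\sW$ over the last exit time $g(X)=u$ so as to reduce the assertion to the corresponding convergence statements under the bridge measures $\Pi^{(u)}$ and under $R$, where Wiener integrals are already under control, and then to glue the fibres together. The elementary observation on which everything rests is the splitting: since $X_u=0$ holds $\Pi^{(u)}$-a.s.\ and $Y_0=0$ holds $R$-a.s., for every step function $f\in\cS$ one has, for $\Pi^{(u)}(\d X)\otimes R(\d Y)$-a.e.\ $(X,Y)$,
\begin{align}
\int_0^{\infty} f(s)\,\d\rbra{X^{(u)}\bullet Y}_s
= \int_0^u f(s)\,\d X_s + \int_0^{\infty} f(s+u)\,\d Y_s ,
\end{align}
because inserting a fictitious break-point at $u$ does not change the integral of a step function against a continuous path and the concatenated path vanishes at $u$.

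The main analytic point is that the assumption $f\in L^2(\d s)\cap L^1(\frac{\d s}{1+\sqrt s})$ is precisely what ensures that, after the shift by $u$, one lands in the space governing the Bessel Wiener integral for a.e.\ $u$. Recalling from \eqref{eq: phia} that $\phi_0(s)=\sqrt{2/(\pi s)}$, so that $L^1(\phi_0(s)\,\d s)$ is $L^1(\frac{\d s}{\sqrt s})$ up to the constant, I would show that for a.e.\ $u>0$ both $f(\cdot+u)\in L^2(\d s)\cap L^1(\phi_0(s)\,\d s)$ and $\|f_n(\cdot+u)-f(\cdot+u)\|_{L^2(\d s)}+\|f_n(\cdot+u)-f(\cdot+u)\|_{L^1(\phi_0(s)\,\d s)}\to 0$, the latter convergence in $L^1_{\mathrm{loc}}(\d u)$. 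The $L^2$ part is immediate; for the $L^1(\phi_0)$ part one computes by Fubini
\begin{align}
\int_0^N \d u \int_0^{\infty}\frac{|g(s+u)|}{\sqrt s}\,\d s
= \int_0^N \d u \int_u^{\infty}\frac{|g(t)|}{\sqrt{t-u}}\,\d t ,
\end{align}
and, splitting the inner integral at $t=2N$, bounds the near part by $\int_0^{2N}|g(t)|\cdot 2\sqrt t\,\d t\le 4N\,\|g\|_{L^2([0,2N])}$ and the tail part by $C_N\,\|g\|_{L^1(\frac{\d s}{1+\sqrt s})}$, using $\sqrt{t-u}\ge\sqrt{t/2}$ for $t\ge 2N\ge 2u$; taking $g=f$ and $g=f_n-f$ gives the claim.

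Now fix $u$ in the resulting full-measure set of good times. On this fibre, $\int_0^u f_n(s)\,\d X_s\to\int_0^u f(s)\,\d X_s$ in $\Pi^{(u)}$-probability since $f_n\to f$ in $L^2(\d s)$, and $\int_0^{\infty}f_n(s+u)\,\d Y_s\to\int_0^{\infty}f(s+u)\,\d Y_s$ in $R$-probability by the construction of the Wiener integral for the $3$-dimensional Bessel process (applied to $R^+_0$, and to $R^-_0$ by symmetry), as $f_n(\cdot+u)\to f(\cdot+u)$ in $L^2(\d s)\cap L^1(\phi_0(s)\,\d s)$ for a.e.\ $u$ along a subsequence. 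To upgrade the fibrewise convergence to a statement valid $\sW$-a.e.\ and simultaneously to produce a jointly measurable functional, I would extract by a diagonal argument a single subsequence $(n_k)$ along which the $L^2$- and $L^1_{\mathrm{loc}}(\d u)$-errors are summable; combining Theorem \ref{thm: FHY} (with $\psi(x)=x^2$), which bounds the $L^2(R^+_0)$-norm of the centered Wiener integral $\int_0^{\infty}(f_{n_k}-f)(s+u)\,\d\hat Y_s$ by $\|f_{n_k}-f\|_{L^2(\d s)}$, with the explicit deterministic shift $\int_0^{\infty}(f_{n_k}-f)(s+u)\phi_0(s)\,\d s$, a Borel--Cantelli argument then gives, for a.e.\ $u$,
\begin{align}
\int_0^{\infty}f_{n_k}(s)\,\d\rbra{X^{(u)}\bullet Y}_s
\tend{}{k\to\infty}
\int_0^u f(s)\,\d X_s + \int_0^{\infty}f(s+u)\,\d Y_s
\qquad \Pi^{(u)}\otimes R\text{-a.s.}
\end{align}
Integrating against $\frac{\d u}{\sqrt{2\pi u}}$ shows $\int_0^{\infty}f_{n_k}(s)\,\d X_s$ converges $\sW$-a.e.; via Proposition \ref{prop: loc in meas} this gives convergence locally in $\sW$-measure of the whole sequence, which we take as the definition of $\int_0^{\infty}f(s)\,\d X_s$. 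Finally, setting $J(f;u,\omega):=\lim_k\int_0^{\infty}f_{n_k}(s)\,\d\omega_s$, and $J:=0$ where this limit fails to exist, yields a $\cB([0,\infty))\otimes\cF_{\infty}$-measurable functional with $J(f;g(X),X)=\int_0^{\infty}f(s)\,\d X_s$ $\sW$-a.e., and the displayed fibre limit says exactly that for a.e.\ $u$ the identity $J(f;u,X^{(u)}\bullet Y)=\int_0^u f(s)\,\d X_s+\int_0^{\infty}f(s+u)\,\d Y_s$ holds a.e.\ with respect to $\Pi^{(u)}(\d X)\otimes R(\d Y)$.

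I expect the main obstacle to be the second paragraph: verifying that the weight $\frac{1}{1+\sqrt s}$ — rather than the naive $\frac1{\sqrt s}$, against which Cauchy--Schwarz with $L^2$ already fails near $s=0$ — is exactly calibrated so that $f(\cdot+u)$ lands in $L^1(\phi_0(s)\,\d s)$ for a.e.\ $u$, controlling both the singularity at $s=0$ (equivalently, at time $u$) and the tail. The remaining steps are bookkeeping, chiefly propagating a single subsequence so that $J$ is genuinely jointly measurable and the fibre identity holds for a.e.\ $u$ and not merely $\sW$-a.e.
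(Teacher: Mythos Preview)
The paper does not prove this theorem: it is quoted from the author's companion preprint \cite{Y1} and stated without argument, so there is nothing in the present paper to compare your sketch against.

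That said, your outline is sound and is the natural route suggested by the structure of $\sW$. The disintegration over $g(X)=u$ together with the splitting of the step-function integral at the concatenation point is essentially forced by the definition \eqref{eq: defsW}, and your key estimate---that $\|f\|_{L^2}$ handles the bridge piece and the near-$u$ singularity of the Bessel drift, while $\|f\|_{L^1(\frac{\d s}{1+\sqrt s})}$ handles the Bessel tail after integrating in $u$---is correct and is precisely why this weight appears in the hypothesis. (Lemma~\ref{lem L1} in the paper performs the same Fubini computation under the stronger assumption $f\in L^1(\d s)$; your split at $t=2N$ is what allows the weaker weight.) Two small points worth recording: first, to pass from convergence along your diagonal subsequence to local convergence in $\sW$-measure of the full sequence you must note that the fibrewise limits $\int_0^u f\,\d X$ and $\int_0^\infty f(\cdot+u)\,\d Y$ are intrinsically defined and hence independent of the approximating sequence, so the same argument applied to an arbitrary subsequence yields the same $\sW$-a.e.\ limit, after which Proposition~\ref{prop: loc in meas}(iv) applies. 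Second, your $J(f;u,\omega)$ as constructed does not actually depend on $u$ (it is a pointwise limit of functionals of $\omega$ alone); this is harmless, since constancy in $u$ is a special case of $\cB([0,\infty))\otimes\cF_\infty$-measurability, and the fibre identity still has content because the path $X^{(u)}\bullet Y$ depends on $u$.
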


The following lemma allows us to use the same notation for Wiener integrals 
under $ W(\d X) $ and $ \sW(\d X) $. 
Let us temporarily write $ I^W(f;X) $ (resp. $ I^{\sW}(f;X) $) 
for the Wiener integral $ I(f;X) $ under $ W(\d X) $ (resp. $ \sW(\d X) $). 

\begin{Lem} \label{lem: equivalence}
Suppose that there exist $ F \in L^1(\sW) $ and $ G \in L^1(W) $ such that 
\begin{align}
\sW \sbra{ H(X) F(X) } = W \sbra{ H(X) G(X) } 
\label{}
\end{align}
holds for any bounded measurable functional $ H(X) $. 
Then, for any $ f \in L^2(\d s) \cap L^1(\frac{\d s}{1 + \sqrt{s}}) $, it holds that 
\begin{align}
\sW \sbra{ \varphi(I^{\sW}(f;X)) H(X) F(X) } 
= W \sbra{ \varphi(I^{W}(f;X)) H(X) G(X) } 
\label{eq: equiv lem}
\end{align}
for any bounded Borel function $ \varphi $ on $ \bR $. 
\end{Lem}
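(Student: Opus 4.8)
The hypothesis says exactly that the two finite measures $F\,\d\sW$ and $G\,\d W$ on $(\Omega,\cF_{\infty})$ coincide; call this common measure $\mu$. (We may and do take $F\ge 0$, $G\ge 0$, as in the intended applications; in general one runs the same argument with the total variation $|\mu|=|F|\,\d\sW=|G|\,\d W$.) The plan is to prove that the two Wiener integrals agree $\mu$-almost everywhere,
\[
I^{\sW}(f;X)=I^{W}(f;X)\qquad \mu\text{-a.e.},
\]
after which \eqref{eq: equiv lem} is immediate: writing $\mu$ in its two forms and integrating the bounded measurable functional $\varphi(\cdot)\,H(\cdot)$ against it gives $\sW[\varphi(I^{\sW}(f;X))H(X)F(X)]=\mu[\varphi(I^{\sW}(f;X))H(X)]=\mu[\varphi(I^{W}(f;X))H(X)]=W[\varphi(I^{W}(f;X))H(X)G(X)]$.

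First I would dispose of the trivial case $\sW[F]=0$: then $F=0$ $\sW$-a.e., so $W[HG]=\sW[HF]=0$ for every bounded $H$, hence $G=0$ $W$-a.e. and both sides of \eqref{eq: equiv lem} vanish. So assume $\mu$ is a non-zero finite measure with $\mu\ll\sW$ (with Radon--Nikodym density $F$) and $\mu\ll W$ (with density $G$). Fix an approximating sequence $\{f_n\}\subset\cS$ for $f$, convergent both in $L^2(\d s)$ and in $L^1(\tfrac{\d s}{1+\sqrt{s}})$ as in Theorem~\ref{thm: approx}. The elementary integrals $I_n(X):=\int_0^{\infty}f_n(s)\,\d X_s$ are one and the same $\cF_{\infty}$-measurable functional of the path, irrespective of the reference measure. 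By Theorem~\ref{thm: approx}, $I_n\to I^{\sW}(f;X)$ locally in $\sW$-measure; by the elementary facts recalled at the start of Section~\ref{sec: Wint}, $I_n\to I^{W}(f;X)$ in $W$-probability.

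The substantive step is to route these two convergences through a single almost-everywhere-convergent subsequence. Using part~(iv) of Proposition~\ref{prop: loc in meas}, extract a subsequence along which $I_n\to I^{\sW}(f;X)$ holds $\sW$-a.e.; along this subsequence one still has $I_n\to I^{W}(f;X)$ in $W$-probability, so extract a further subsequence $\{n(k)\}$ along which also $I_{n(k)}\to I^{W}(f;X)$ holds $W$-a.e. Since $\mu\ll\sW$ and $\mu\ll W$, both convergences hold $\mu$-a.e. along $\{n(k)\}$, and uniqueness of the $\mu$-a.e. limit of the single sequence $(I_{n(k)})_k$ forces $I^{\sW}(f;X)=I^{W}(f;X)$ $\mu$-a.e. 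Finally, for bounded Borel $\varphi$ and bounded measurable $H$, the functional $\varphi(I^{\sW}(f;X))\,H(X)$ is bounded and $\cF_{\infty}$-measurable (Theorem~\ref{thm: approx} provides the explicit measurable representative $J(f;g(X),X)$ for $I^{\sW}(f;X)$), so the hypothesis applies and, after replacing $I^{\sW}(f;X)$ by $I^{W}(f;X)$ on the $W$-side using the $\mu=G\,\d W$-a.e. identity just obtained, yields precisely \eqref{eq: equiv lem}. The only delicate point is the common-subsequence extraction in this last paragraph; the rest is routine measure-theoretic bookkeeping.
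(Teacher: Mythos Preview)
Your proof is correct and follows essentially the same approach as the paper's one-line argument (``obvious by Theorem~\ref{thm: approx} and by the dominated convergence theorem''): both use the approximation by step functions from Theorem~\ref{thm: approx}, extract $\sW$-a.e.\ and $W$-a.e.\ convergent subsequences, and exploit that the finite measure $F\,\d\sW=G\,\d W$ is absolutely continuous with respect to both reference measures. Your organization---first establishing $I^{\sW}(f;X)=I^{W}(f;X)$ $\mu$-a.e.\ and then applying the hypothesis once---is a clean way to handle arbitrary bounded Borel $\varphi$ directly, but it is not a genuinely different route.
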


\begin{proof}
This is obvious by Theorem \ref{thm: approx} and by the dominated convergence theorem. 
\end{proof}

\subsection{Integrability lemma}

For later use, we need the following lemma. 

\begin{Lem} \label{lem L1}
Let $ f \in L^1(\d s) $. Define 
\begin{align}
\tilde{f}(t) 
= \int_0^{\infty } |f(s+t)| \frac{\d s}{\sqrt{s}} 
= \int_t^{\infty } |f(s)| \frac{\d s}{\sqrt{s-t}} 
, \quad t > 0 . 
\label{eq: def of g}
\end{align}
Then the following statements hold: 
\subitem {\rm (i)} 
For any $ a>0 $, it holds that 
\begin{align}
\int_0^a \tilde{f}(t) \d t 
\le 2 \sqrt{a} \int_0^{\infty } |f(s)| \d s ; 
\label{}
\end{align}
\subitem {\rm (ii)} 
There exists a sequence $ t(n) \to \infty $ such that 
$ \tilde{f}(t(n)) \to 0 $. 
\end{Lem}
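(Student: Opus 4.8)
The plan is to establish part (i) by Tonelli's theorem together with an explicit one-dimensional integral, and then to deduce part (ii) from (i) by comparing linear and square-root growth.

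For (i), I would start from the second expression for $\tilde f$ and write
\[
\int_0^a \tilde f(t)\, \d t = \int_0^a \left( \int_t^{\infty} |f(s)| \frac{\d s}{\sqrt{s-t}} \right) \d t .
\]
Since the integrand is non-negative, Tonelli's theorem permits exchanging the order of integration over the region $\{(t,s) : 0 \le t \le a,\ t \le s < \infty\}$; after this swap the right-hand side becomes
\[
\int_0^{\infty} |f(s)| \left( \int_0^{\min(a,s)} \frac{\d t}{\sqrt{s-t}} \right) \d s .
\]
The inner integral equals $2\sqrt{s} - 2\sqrt{s - \min(a,s)}$: when $s \le a$ it is $2\sqrt{s} \le 2\sqrt{a}$, and when $s > a$ it equals $2(\sqrt{s} - \sqrt{s-a}) = 2a/(\sqrt{s}+\sqrt{s-a}) \le 2a/\sqrt{s} \le 2\sqrt{a}$. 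Hence the inner integral is at most $2\sqrt a$ in every case, and (i) follows immediately.

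For (ii), I would argue by contradiction: if $\tilde f(t)$ were bounded away from $0$ for all large $t$, say $\tilde f(t) \ge c > 0$ for $t \ge T$, then integrating over $[T,a]$ and using (i) would give $c(a-T) \le 2\sqrt{a}\int_0^{\infty}|f(s)|\,\d s$ for every $a > T$, which is impossible as $a \to \infty$. Therefore $\liminf_{t\to\infty}\tilde f(t)=0$, and choosing a sequence $t(n)\to\infty$ realizing this $\liminf$ completes the argument. Equivalently, (i) shows that the Ces\`aro average $\frac1a\int_0^a\tilde f(t)\,\d t$ is $O(a^{-1/2})\to 0$, and since $\tilde f\ge 0$ this already forces $\liminf_{t\to\infty}\tilde f(t)=0$.

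I do not anticipate a genuine obstacle here. The only places that ask for a little care are identifying the correct region of integration when applying Tonelli and carrying out the case split $s\le a$ versus $s>a$ when bounding the inner integral; the passage from (i) to (ii) is then a pure growth-rate comparison.
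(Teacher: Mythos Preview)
Your proposal is correct and follows essentially the same approach as the paper: Tonelli plus a case split $s\le a$ versus $s>a$ for part (i), and a linear-versus-square-root growth comparison to get $\liminf_{t\to\infty}\tilde f(t)=0$ for part (ii). The only cosmetic differences are that the paper splits at $s=a$ before swapping the order of integration and bounds the $s>a$ inner integral via $\tfrac{1}{\sqrt{s-t}}\le\tfrac{1}{\sqrt{a-t}}$ rather than computing it exactly, and for (ii) it phrases the growth comparison as $\tfrac{b-a}{\sqrt b}\inf_{t>a}\tilde f(t)\le 2\|f\|_{L^1}$ rather than as a contradiction, but these are the same argument.
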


\begin{proof}
(i) Let $ a > 0 $. Then we have 
\begin{align}
\int_0^a \tilde{f}(t) \d t 
=& \int_0^a \d t \int_t^a |f(s)| \frac{\d s}{\sqrt{s-t}} 
+ \int_0^a \d t \int_a^{\infty } |f(s)| \frac{\d s}{\sqrt{s-t}} 
\label{} \\
=& \int_0^a \d s |f(s)| \int_0^s \frac{\d t}{\sqrt{s-t}} 
+ \int_a^{\infty } \d s |f(s)| \int_0^a \frac{\d t}{\sqrt{s-t}} 
\label{} \\
\le& \int_0^a |f(s)| (2 \sqrt{s}) \d s 
+ \int_a^{\infty } \d s |f(s)| \int_0^a \frac{\d t}{\sqrt{a-t}} 
\label{} \\
\le& 2 \sqrt{a} \int_0^{\infty } |f(s)| \d s . 
\label{}
\end{align}

(ii) Let $ 0<a<b<\infty $. Then we have 
\begin{align}
\frac{(b-a)}{\sqrt{b}} \inf_{t:t>a} \tilde{f}(t) 
\le \frac{1}{\sqrt{b}} \int_a^b \tilde{f}(t) \d t 
\le 2 \int_0^{\infty } |f(s)| \d s . 
\label{}
\end{align}
Since $ (b-a)/\sqrt{b} \to \infty $ as $ b \to \infty $ with $ a $ fixed, 
we wee that $ \inf_{t:t>a} \tilde{f}(t) = 0 $ for any $ a>0 $. This implies that 
\begin{align}
\liminf_{t \to \infty } \tilde{f}(t) = 0 . 
\label{}
\end{align}
The proof is now complete. 
\end{proof}

\section{Cameron--Martin formula} \label{sec: proof}

For a function $ h_t = \int_0^t f(s) \d s $ 
with $ f \in L^2(\d s) \cap L^1(\frac{\d s}{1 + \sqrt{s}}) $ 
and a process $ (X_s) $ under $ \sW_x $ for $ x \in \bR $, 
we write 
\begin{align}
\cE_t(f;X) = \exp \rbra{ \int_0^t f(s) \d X_s - \frac{1}{2} \int_0^t f(s)^2 \d s } 
\label{}
\end{align}
and 
\begin{align}
\cE(f;X) 
= \exp \rbra{ \int_0^{\infty } f(s) \d X_s 
- \frac{1}{2} \int_0^{\infty } f(s)^2 \d s } . 
\label{}
\end{align}
In what follows, 
let $ V $ be a non-negative Borel measure satisfying \eqref{eq: V ibility}.

\subsection{The first step}

\begin{Prop} \label{thm: main lem}
Let $ h_t = \int_0^t f(s) \d s $ with $ f \in L^2(\d s) $ and let $ T>0 $. 
Then, 
for any non-negative $ \cF_{\infty } $-measurable functional $ F(X) $, 
it holds that 
\begin{align}
\sW[F(X+h_{\cdot \wedge T})] = \sW[F(X) \cE_{T}(f;X)] . 
\label{eq: main lem}
\end{align}
If, moreover, $ M_T[F;X] \in L^p(W) $ for some $ p>1 $, 
then $ F(X+h_{\cdot \wedge T}) \in L^1(\sW) $. 
\end{Prop}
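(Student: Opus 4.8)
The plan is to split the path of $X+h_{\cdot\wedge T}$ at time $T$ and to pass through the Markov property of $\sW$ (Theorem~\ref{thm: Markov}) combined with the classical Cameron--Martin formula \eqref{eq: qi local} for Brownian motion. As a preliminary I would first upgrade Theorem~\ref{thm: Markov} to the following jointly measurable form: for every non-negative $\cB(\Omega^{(T)})\otimes\cF_{\infty}$-measurable functional $\Phi$,
\[
\sW\sbra{\Phi\big(X^{(T)},\theta_T X\big)} = W\sbra{\sW_{X_T}\big[\Phi(X^{(T)},\cdot)\big]},
\]
where on the right $X^{(T)}$ is frozen and $\sW_{X_T}$ integrates the post-$T$ path. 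This follows from Theorem~\ref{thm: Markov}, applied to product functionals $\Phi(\omega^{(T)},\omega)=Z(\omega^{(T)})H(\omega)$, by a routine functional monotone-class argument for bounded $\Phi$, then extended to non-negative (possibly unbounded) $\Phi$ by monotone convergence; this is the point at which the unboundedness of $\cE_T(f;X)$, flagged in the introduction, is accommodated.

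For the main computation, since $h_{\cdot\wedge T}$ equals the constant $h_T$ on $[T,\infty)$ we have $(X+h_{\cdot\wedge T})^{(T)}=X^{(T)}+h|_{[0,T]}$ and $\theta_T(X+h_{\cdot\wedge T})=h_T+\theta_T X$, hence
\[
F(X+h_{\cdot\wedge T}) = \Phi_1\big(X^{(T)},\theta_T X\big),\qquad \Phi_1(\omega^{(T)},\omega):=F\big((\omega^{(T)}+h|_{[0,T]})\bullet(h_T+\omega)\big).
\]
Applying the generalized Markov property gives $\sW[F(X+h_{\cdot\wedge T})]=W[\sW_{X_T}[\Phi_1(X^{(T)},\cdot)]]$, and when the inner integral is computed the constant $h_T$ on the tail together with the shift $X_T\mapsto X_T+h_T$ of the time-$T$ position is absorbed into the base point of the family $(\sW_x)$ and reattaches precisely to the pre-$T$ path translated by $h$ (whose terminal value is $X_T+h_T$); concretely,
\[
\sW\sbra{F(X+h_{\cdot\wedge T})} = W\sbra{\Theta\big((X+h_{\cdot\wedge T})^{(T)}\big)},\qquad \Theta(\omega^{(T)}):=\sW_{\omega^{(T)}_T}\big[F(\omega^{(T)}\bullet\cdot)\big].
\]
Since $\Theta(\cdot^{(T)})$ is a non-negative $\cF_T$-measurable functional and $h$, $h_{\cdot\wedge T}$ coincide on $[0,T]$, the Brownian Cameron--Martin formula \eqref{eq: qi local} yields $W[\Theta((X+h_{\cdot\wedge T})^{(T)})] = W[\Theta(X^{(T)})\,\cE_T(f;X)]$. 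On the other hand $\cE_T(f;X)$ is $\cF_T$-measurable, so the generalized Markov property applied to $\Phi_2(\omega^{(T)},\omega):=\cE_T(f;\omega^{(T)})\,F(\omega^{(T)}\bullet\omega)$ gives $\sW[F(X)\,\cE_T(f;X)] = W[\cE_T(f;X)\,\Theta(X^{(T)})]$. Comparing the two right-hand sides proves \eqref{eq: main lem}; all equalities are read in $[0,\infty]$, which is legitimate by non-negativity and is unavoidable since $\sW$ has infinite mass.

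For the second assertion, note that $\Theta(X^{(T)})=\sW_{X_T}[F(X^{(T)}\bullet\cdot)]=\int_{\Omega}\sW_{X_T}(\d Y)\,F(X^{(T)}\bullet Y)$ is precisely $M_T[F;X]$ by \eqref{eq: Mp1}, so the identity just obtained reads $\sW[F(X+h_{\cdot\wedge T})]=W[M_T[F;X]\,\cE_T(f;X)]$. If $M_T[F;X]\in L^p(W)$ for some $p>1$, then H\"older's inequality with the conjugate exponent $q$ gives
\[
\sW\sbra{F(X+h_{\cdot\wedge T})} \le \big\|M_T[F;X]\big\|_{L^p(W)}\,\big\|\cE_T(f;X)\big\|_{L^q(W)},
\]
and $\|\cE_T(f;X)\|_{L^q(W)}^q=\exp\rbra{\tfrac{q^2-q}{2}\int_0^T f(s)^2\,\d s}<\infty$, since under $W$ the Wiener integral $\int_0^T f(s)\,\d X_s$ is a centered Gaussian variable with variance $\int_0^T f(s)^2\,\d s$. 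Hence $\sW[F(X+h_{\cdot\wedge T})]<\infty$, i.e. $F(X+h_{\cdot\wedge T})\in L^1(\sW)$.

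I do not expect a deep obstacle. The two points needing care are (a) the passage from the product form of Theorem~\ref{thm: Markov} to its jointly measurable, possibly unbounded version (a functional monotone-class argument followed by monotone convergence, the latter absorbing the unbounded weight $\cE_T$), and (b) the bookkeeping in the second paragraph that makes precise how the constant tail-shift $h_T$ is transported by $\sW_{X_T}$ and cancels, so that only the genuine Brownian translation on $[0,T]$ remains and is dispatched by the classical Cameron--Martin formula.
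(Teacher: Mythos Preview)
Your strategy—disintegrate $\sW$ at time $T$ via the Markov property, apply the Brownian Cameron--Martin formula on $[0,T]$, then reassemble—is exactly the paper's, and your treatment of the second assertion (H\"older with $\cE_T\in L^q(W)$) is identical. The packaging differs: the paper carries an explicit weight $\cK(\delta_0;\cdot)$ through the whole computation (so that every integral is finite and Theorem~\ref{thm: Markov} in its stated product form suffices), removing the weight only at the very end; you instead first upgrade Theorem~\ref{thm: Markov} to jointly measurable $\Phi$ and then apply it twice with no auxiliary weight.

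Both routes work, but yours hides a genuine subtlety where you call the monotone-class step ``routine''. The functional monotone-class theorem needs a vector space of bounded test functions, yet here even bounded $\Phi$ may give $\sW[\Phi(X^{(T)},\theta_T X)]=\infty$ (indeed $\sW$ is not $\sigma$-finite on $\cF_T$, Theorem~\ref{thm: NRY2}(ii)), so subtraction is illegitimate and the standard argument does not apply as stated. The repair is to verify that the two candidate measures on $\Omega^{(T)}\times\Omega$ are $\sigma$-finite on the $\pi$-system of rectangles—for instance via the sets $\Omega^{(T)}\times\{L^0_{\infty}<n\}$, using $\sW_x(L^0_{\infty}<n)\le e^n(1+|x|)$ from \eqref{eq: varphi1} and \eqref{eq: varphi2}—and then invoke uniqueness of $\sigma$-finite measures agreeing on a generating $\pi$-system. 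That is precisely the integrability input the paper exploits by introducing $\cK(\delta_0;\cdot)$ from the outset; the paper's version has the merit of making this ingredient visible rather than tucking it inside a lemma.
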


\begin{proof}
Let $ t \ge T $ be fixed. 
By the multiplicativity property of $ \cK(\delta_0;\cdot) $ 
and since $ h_{(\cdot + t) \wedge T} = h_T $, we have 
\begin{align}
\cK(\delta_0;X + h_{\cdot \wedge T}) 
= \cK_t(\delta_0;X + h_{\cdot \wedge T}) 
\cK(\delta_0;\theta_t X + h_T) . 
\label{}
\end{align}
Let $ G_t(X) $ be a non-negative $ \cF_t $-measurable functional. 
Then, by the Markov property \eqref{eq: Mp1}, 
by \eqref{eq:defsWx} and by \eqref{eq: varphi1},  
we have 
\begin{align}
M_t \sbra{ \cK(\delta_0;\cdot + h_{\cdot \wedge T}) ; X } 
=& \cK_t(\delta_0;X + h_{\cdot \wedge T}) 
\sW_{X_t} \sbra{ \cK(\delta_0;X + h_T) } 
\label{} \\
=& \cK_t(\delta_0;X + h_{\cdot \wedge T}) 
\sW_{X_t+h_T} \sbra{ \cK(\delta_0;X) } 
\label{} \\
=& \cK_t(\delta_0;X + h_{\cdot \wedge T}) 
(1+|X_t+h_T|) . 
\label{}
\end{align}
Hence we obtain 
\begin{align}
\begin{split}
& \sW \sbra{ G_t(X+h_{\cdot \wedge T}) \cK(\delta_0;X + h_{\cdot \wedge T}) } 
\\
=& W \sbra{ G_t(X+h_{\cdot \wedge T}) \cK_t(\delta_0;X + h_{\cdot \wedge T}) 
\rbra{ 1+|X_t+h_T| } } . 
\end{split}
\label{eq: main lem pf1}
\end{align}
By the Cameron--Martin formula \eqref{eq: CM}, 
by formula \eqref{eq: sWdelta0}, 
and then by the Markov property \eqref{eq: Mp}, we have 
\begin{align}
\text{\eqref{eq: main lem pf1}} 
=& W \sbra{ G_t(X) \cK_t(\delta_0;X) 
\rbra{ 1 + |X_t| } \cE_T(f;X) } 
\label{} \\
=& W \sbra{ G_t(X) M_t[\cK(\delta_0;\cdot);X] \cE_T(f;X) } 
\label{} \\
=& \sW \sbra{ G_t(X) \cK(\delta_0;X) \cE_T(f;X) } . 
\label{}
\end{align}
Since $ t \ge T $ is arbitrary, we see that 
\begin{align}
\sW \sbra{ G(X+h_{\cdot \wedge T}) \cK(\delta_0;X + h_{\cdot \wedge T}) } 
= 
\sW \sbra{ G(X) \cK(\delta_0;X) \cE_T(f;X) } 
\label{}
\end{align}
holds for any non-negative $ \cF_{\infty } $-measurable functional $ G(X) $. 
Replacing the functional $ G(X) $ by $ F(X) \cK(\delta_0;X)^{-1} $, 
we obtain \eqref{eq: main lem}. 

Suppose that $ M_T[F;X] \in L^p(W) $ for some $ p>1 $. 
Since $ \cE_T(h;X) $ is $ \cF_T $-measurable, we have 
\begin{align}
\sW[F(X) \cE_T(f;X)] 
=& W[M_T[F;X] \cE_T(f;X)] 
\label{} \\
\le& W[M_T[F;X]^p]^{1/p} W[\cE_T(f;X)^q]^{1/q} 
< \infty 
\label{}
\end{align}
where $ q $ is the conjugate exponent to $ p $: $ (1/p) + (1/q) = 1 $. 
The proof is now complete. 
\end{proof}

\subsection{Integrability under $ \sW $, when weighed by Feynman--Kac functionals}

We need the following theorem. 

\begin{Thm} \label{thm: nondeg}
Let $ h_t = \int_0^t f(s) \d s $ with $ f \in L^2(\d s) \cap L^1(\d s) $. 
Let $ V $ be as in Theorem \ref{thm: penal} 
and set $ C_V = \inf_{x \in \bR} \varphi_V(x) > 0 $. 
Then it holds that 
\begin{align}
\sW \sbra{ \cK(V;X) \cE(f;X) } 
\le& \varphi_V(0) \exp \rbra{ \frac{1}{C_V} \| f \|_{L^1(\d s)} } . 
\label{}
\end{align}
\end{Thm}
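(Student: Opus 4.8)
The plan is to transport the computation onto the probability measure $W_0^{(V)}$, under which the coordinate process is an explicit diffusion with bounded drift, and then to split the Wiener integral $\int_0^\infty f(s)\,\d X_s$ into a Gaussian exponential–martingale factor of $W_0^{(V)}$-expectation one and a drift factor that is deterministically bounded by $\e^{\|f\|_{L^1(\d s)}/C_V}$.

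First I would invoke the absolute-continuity relation \eqref{eq: equiv between WxV and sW} at $x=0$, i.e.\ $W_0^{(V)}(\d X) = \varphi_V(0)^{-1}\cK(V;X)\,\sW(\d X)$ on $\cF_{\infty}$, to rewrite
\[
\sW[\cK(V;X)\,\cE(f;X)] = \varphi_V(0)\, W_0^{(V)}[\cE(f;X)] ,
\]
so that it suffices to show $W_0^{(V)}[\cE(f;X)] \le \e^{\|f\|_{L^1(\d s)}/C_V}$. Recall from Theorem \ref{thm: penal}(v) that under $W_0^{(V)}$ the process solves $\d X_t = \d B_t + b(X_t)\,\d t$ with $b = \varphi_V'/\varphi_V$ and $B$ a $W_0^{(V)}$-Brownian motion started at $0$. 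Since $\varphi_V'' = 2\varphi_V\,V$ is a non-negative measure by \eqref{eq: SLeq1} and $\varphi_V\ge 0$, the function $\varphi_V'$ is non-decreasing, hence takes values in $[-1,1]$ by the boundary condition \eqref{eq: SLeq2}; together with $\inf_{x}\varphi_V(x) = C_V > 0$ (Remark \ref{rem: linear bound}(vii)) this gives $\|b\|_\infty \le 1/C_V$.

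The key step is the pathwise decomposition
\[
\int_0^\infty f(s)\,\d X_s = \int_0^\infty f(s)\,\d B_s + \int_0^\infty f(s)\,b(X_s)\,\d s
\qquad W_0^{(V)}\text{-a.s.}
\]
To establish it I would choose $f_n \in \cS$ with $f_n \to f$ in $L^1(\d s)$ and in $L^2(\d s)$ (possible since $f \in L^2(\d s)\cap L^1(\d s)$); then also $f_n \to f$ in $L^1(\frac{\d s}{1+\sqrt s})$, so Theorem \ref{thm: approx} yields $\int_0^\infty f_n\,\d X_s \to \int_0^\infty f\,\d X_s$ locally in $\sW$-measure, hence by Proposition \ref{prop: loc in meas} in $\sW^G$-probability with $G = \cK(\lambda\delta_0;\cdot) = \e^{-\lambda L_\infty^0}$; this $G$ lies in $L^1_+(\sW)$, its $\sW$-mass being $\varphi_{\lambda\delta_0}(0) = 1/\lambda$ by \eqref{eq: varphi2} and \eqref{eq: varphi1}, and $L_\infty^0 < \infty$ $\sW$-a.e.\ since $g(X) < \infty$ $\sW$-a.e. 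As $\sW^{\cK(\lambda\delta_0;\cdot)} = W_0^{(\lambda\delta_0)}$ has the same null sets as $\sW$ and $W_0^{(V)} \ll \sW$, the convergence $\int_0^\infty f_n\,\d X_s \to \int_0^\infty f\,\d X_s$ also holds in $W_0^{(V)}$-probability. On the other hand, for each step function the SDE gives $\int_0^\infty f_n\,\d X_s = \int_0^\infty f_n\,\d B_s + \int_0^\infty f_n(s)\,b(X_s)\,\d s$, where the first term converges to $\int_0^\infty f\,\d B_s$ in $L^2(W_0^{(V)})$ and the second converges since $\bigl|\int_0^\infty (f_n-f)(s)\,b(X_s)\,\d s\bigr| \le \|b\|_\infty\,\|f_n-f\|_{L^1(\d s)} \to 0$. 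Comparing the two limits in $W_0^{(V)}$-probability gives the displayed identity.

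Finally, since $\int_0^\infty f(s)\,b(X_s)\,\d s \le \|b\|_\infty\|f\|_{L^1(\d s)} \le \|f\|_{L^1(\d s)}/C_V$ and $\int_0^\infty f\,\d B_s$ is centered Gaussian with variance $\|f\|_{L^2(\d s)}^2$ under $W_0^{(V)}$, the decomposition gives
\[
W_0^{(V)}[\cE(f;X)] \le \e^{\|f\|_{L^1(\d s)}/C_V}\; W_0^{(V)}\!\sbra{\, \e^{\int_0^\infty f\,\d B_s - \frac12\|f\|_{L^2(\d s)}^2} \,} = \e^{\|f\|_{L^1(\d s)}/C_V} ,
\]
which with the first display proves the theorem. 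The main obstacle is the middle step: reconciling the $\sW$-Wiener integral, available from Theorem \ref{thm: approx} only as a limit taken locally in $\sW$-measure, with its concrete drift decomposition under the equivalent diffusion measure $W_0^{(V)}$; the remaining estimates are routine.
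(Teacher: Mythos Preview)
Your proof is correct and follows essentially the same route as the paper: transport to $W_0^{(V)}$ via \eqref{eq: equiv between WxV and sW}, decompose $\int_0^\infty f\,\d X_s$ into the Brownian part $\int_0^\infty f\,\d B_s$ and the drift part $\int_0^\infty f(s)\,(\varphi_V'/\varphi_V)(X_s)\,\d s$, bound the drift by $\|f\|_{L^1}/C_V$ using $|\varphi_V'|\le 1$, and use $W_0^{(V)}[\cE(f;B)]=1$. Your justification of the decomposition via Theorem \ref{thm: approx} and Proposition \ref{prop: loc in meas} is more explicit than the paper's; note that you could have taken $G=\cK(V;\cdot)\in L^1_+(\sW)$ directly (so that $\sW^G=W_0^{(V)}$) and avoided the detour through $\lambda\delta_0$.
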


\begin{proof}
By Theorem \ref{thm: NRY1}, we have 
\begin{align}
\frac{1}{\varphi_V(0)} 
\sW \sbra{ \cK(V;X) \cE(f;X) } = W^{(V)} \sbra{ \cE(f;X) } . 
\label{eq: nondeg1}
\end{align}
By (v) of Theorem \ref{thm: penal}, we see that 
\begin{align}
\text{\eqref{eq: nondeg1}} = 
W^{(V)} \sbra{ \cE(f;B) 
\exp \rbra{ \int_0^{\infty } f(s) \frac{\varphi_V'}{\varphi_V}(X_s) \d s } 
} 
\label{eq: nondeg2}
\end{align}
where $ \{ (B_t),W^{(V)} \} $ is a Brownian motion. 
Since $ |\varphi_V'(x)| \le 1 $ and $ \varphi_V(x) \ge C_V $ for any $ x \in \bR $, we have 
\begin{align}
\text{\eqref{eq: nondeg2}} \le 
W^{(V)} \sbra{ \cE(f;B) } 
\exp \rbra{ \frac{1}{C_V} \int_0^{\infty } |f(s)| \d s } . 
\label{}
\end{align}
Since $ W^{(V)}[\cE(f;B)] = 1 $, we obtain the desired inequality. 
\end{proof}

\subsection{The second step}

We utilize the following lemma. 

\begin{Lem} \label{lem2}
Let $ h_t = \int_0^t f(s) \d s $ with $ f \in L^2(\d s) \cap L^1(\d s) $. 
Then, for any $ 0<s<\infty $, it holds that 
\begin{align}
\sW \sbra{ \cE_t(f;X) \e^{-g(X)} ; g(X)>t } 
\tend{}{t \to \infty } 
0 . 
\label{eq: gX>t conv to zero}
\end{align}
\end{Lem}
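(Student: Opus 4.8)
The plan is to read off the left-hand side directly from the defining formula \eqref{eq: def of sW} of $ \sW $ and reduce the statement to an elementary computation for the Brownian bridge. By \eqref{eq: def of sW}, conditionally on $ g(X)=u $ the path $ (X_s:s\le u) $ is a Brownian bridge of length $ u $ while $ (\theta_u X) $ is an independent symmetrized $ 3 $-dimensional Bessel process, so that $ g(X)=u $ holds $ \sW $-a.e.\ on the $ u $-fibre and $ \sW(g\in\d u)=\d u/\sqrt{2\pi u} $. Since $ \cE_t(f;X) $ is a functional of $ (X_s:s\le t) $ only, on $ \{ g>t \} $ (i.e.\ on $ u>t $) it depends on the bridge part alone, so the Bessel part integrates to $ 1 $ and
\begin{align}
\sW\sbra{ \cE_t(f;X)\e^{-g(X)} ; g(X)>t }
= \int_t^{\infty} \frac{\d u}{\sqrt{2\pi u}}\, \e^{-u}\, \Pi^{(u)}\sbra{ \cE_t(f;X) } .
\label{}
\end{align}
Everything then reduces to estimating $ \Pi^{(u)}\sbra{ \cE_t(f;X) } $ for $ t<u $.

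I would compute this quantity exactly. Under $ W $, the pair $ \rbra{ \int_0^t f(s)\,\d X_s,\ X_u } $ is centred Gaussian with variances $ \int_0^t f^2\,\d s $ and $ u $ and covariance $ \int_0^t f(s)\,\d s = h_t $; conditioning on $ X_u=0 $, which is what $ \Pi^{(u)} $ does, therefore makes $ \int_0^t f\,\d X $ a centred Gaussian with variance $ \int_0^t f^2\,\d s - h_t^2/u $, the latter being non-negative by Cauchy--Schwarz since $ h_t^2\le t\int_0^t f^2\,\d s\le u\int_0^t f^2\,\d s $. Taking the Laplace transform of this Gaussian gives
\begin{align}
\Pi^{(u)}\sbra{ \cE_t(f;X) } = \exp\rbra{ -\frac{h_t^2}{2u} } \le 1 .
\label{}
\end{align}
The only point needing care is the identification, on $ \{ g>t \} $, of the Wiener integral $ \int_0^t f\,\d X $ under $ \sW $ (defined through Theorem~\ref{thm: approx}) with the classical bridge Wiener integral: this follows by approximating $ f $ by step functions, using that $ g\mapsto\int_0^t g\,\d X $ is an $ L^2 $-contraction from $ L^2(\d s) $ into $ L^2(\Pi^{(u)}) $ and that the exponentials $ \cE_t(f_n;X) $ are bounded in some $ L^p(\Pi^{(u)}) $, $ p>1 $, hence uniformly integrable under $ \Pi^{(u)} $.

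Plugging the bound $ \Pi^{(u)}\sbra{\cE_t(f;X)}\le 1 $ into the first display yields
\begin{align}
0 \le \sW\sbra{ \cE_t(f;X)\e^{-g(X)} ; g(X)>t }
\le \int_t^{\infty} \frac{\d u}{\sqrt{2\pi u}}\, \e^{-u} ,
\label{}
\end{align}
and the right-hand side tends to $ 0 $ as $ t\to\infty $ because $ \int_0^{\infty}(2\pi u)^{-1/2}\e^{-u}\,\d u<\infty $. This proves \eqref{eq: gX>t conv to zero}. I do not expect any genuine obstacle here: the only mild technicality is the Wiener-integral identification just mentioned, and one may observe in passing that only $ f\in L^2(\d s) $ actually enters, while the weight $ \e^{-g(X)} $ is essential, since without it the $ u $-integral would diverge.
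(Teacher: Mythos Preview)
Your proof is correct and takes a genuinely different route from the paper's. The paper does not disintegrate over $g(X)$; instead it applies the Markov property \eqref{eq: Mp} at time $t$ to write
\[
\sW\sbra{\cE_t(f;X)\,\e^{-g(X)};\,g(X)>t}
= W\sbra{\cE_t(f;X)\,\e^{-t}\,\sW_{X_t}\!\sbra{\e^{-g};\,\tau_0<\infty}},
\]
then bounds the inner $\sW_{X_t}$-expectation uniformly by $\sW_0[\e^{-g}]=\int_0^{\infty}(2\pi u)^{-1/2}\e^{-u}\,\d u=1/\sqrt{2}$ via the strong Markov property at $\tau_0$, and concludes using $W[\cE_t(f;X)]=1$, which gives the bound $\e^{-t}/\sqrt{2}$. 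You instead read the quantity directly from the disintegration \eqref{eq: def of sW}, compute the bridge Laplace transform $\Pi^{(u)}[\cE_t(f;X)]=\exp(-h_t^2/(2u))\le 1$ explicitly, and dominate by $\int_t^{\infty}(2\pi u)^{-1/2}\e^{-u}\,\d u$. Your bound is asymptotically sharper (order $\e^{-t}/\sqrt{t}$ versus $\e^{-t}$), and your observation that only $f\in L^2(\d s)$ is used is valid for both arguments. The paper's route has the minor advantage of sidestepping the identification issue you flag for the Wiener integral on each bridge fibre, since it works entirely under $W$ where $\cE_t(f;X)$ is unambiguous; your treatment of that point via $L^p$-boundedness of the approximating exponentials is nonetheless sound.
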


\begin{proof}
By the Markov property \eqref{eq: Mp}, we see that 
\begin{align}
\sW \sbra{ \cE_t(f;X) \e^{-g(X)} ; g(X)>t } 
= W \sbra{ \cE_t(f;X) \e^{-t} 
\sW_{X_t} \sbra{ \e^{-g(X)} ; \tau_0(X) < \infty } } . 
\label{eq: gX>t conv to zero prf}
\end{align}
By the strong Markov property \eqref{eq: Mp}, we see, for any $ x \in \bR $, that 
\begin{align}
\sW_x \sbra{ \e^{-g(X)} ; \tau_0(X) < \infty } 
= W_x \sbra{ \e^{-\tau_0(X)} } \sW_0 \sbra{ \e^{-g(X)} } 
\le \int_0^{\infty } \frac{\d u}{\sqrt{2 \pi u}} \e^{-u} = \frac{1}{\sqrt{2}}. 
\label{}
\end{align}
Hence we obtain 
\begin{align}
\text{\eqref{eq: gX>t conv to zero prf}} 
\le \frac{1}{\sqrt{2}} \e^{-t} W \sbra{ \cE_t(f;X) } 
= \frac{1}{\sqrt{2}} \e^{-t} 
\tend{}{t \to \infty } 0 . 
\label{}
\end{align}
The proof is now complete. 
\end{proof}

\begin{Lem} \label{lem2-2}
Let $ h_t = \int_0^t f(s) \d s $ with $ f \in L^2(\d s) \cap L^1(\d s) $. 
Let $ V $ be as in Theorem \ref{thm: penal}. 
Then it holds that 
\begin{align}
\sW \sbra{ \cE(f;X) \cK(V;X) \e^{-g(X)} ; g(X)>t } 
\tend{}{t \to \infty } 
0 . 
\label{eq: gX>t conv to zero2}
\end{align}
\end{Lem}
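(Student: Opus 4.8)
The plan is to deduce \eqref{eq: gX>t conv to zero2} directly from the integrability estimate of Theorem~\ref{thm: nondeg} by dominated convergence; the crucial point is that, in contrast with Lemma~\ref{lem2}, the weight $ \cE(f;X) $ here is the \emph{full} Wiener--integral exponential, a fixed $ \cF_{\infty} $-functional that does not vary with $ t $, so no Markov-property manipulation is needed.

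First I would record the two ingredients. On the one hand, since $ f \in L^2(\d s) \cap L^1(\d s) $ and $ V $ satisfies \eqref{eq: V ibility}, Theorem~\ref{thm: nondeg} gives
\begin{align}
\sW \sbra{ \cK(V;X) \cE(f;X) }
\le \varphi_V(0) \exp \rbra{ \frac{1}{C_V} \| f \|_{L^1(\d s)} } < \infty ,
\end{align}
so that $ \cK(V;X) \cE(f;X) \in L^1(\sW) $; note also that this functional is non-negative and, by Theorem~\ref{thm: approx}, $ \sW $-a.e.\ finite and well defined, since $ L^1(\d s) \subset L^1(\frac{\d s}{1+\sqrt{s}}) $. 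On the other hand, by Theorem~\ref{thm: NRY2}(i) we have $ \sW(g(X) \in \d u) = \frac{\d u}{\sqrt{2 \pi u}} $ on $ [0,\infty) $, hence $ g(X) < \infty $ for $ \sW $-a.e.\ $ X $, and therefore $ 1_{\{ g(X) > t \}} \to 0 $ $ \sW $-a.e.\ as $ t \to \infty $.

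It then remains to observe that, for every $ t > 0 $,
\begin{align}
0 \le \cE(f;X) \cK(V;X) \e^{-g(X)} 1_{\{ g(X) > t \}} \le \cK(V;X) \cE(f;X) ,
\end{align}
because $ \e^{-g(X)} \le 1 $ and $ \cE(f;X),\cK(V;X) \ge 0 $; the right-hand side is a $ t $-independent majorant lying in $ L^1(\sW) $, while the left-hand side tends to $ 0 $ $ \sW $-a.e. Applying the dominated convergence theorem (valid for the $ \sigma $-finite measure $ \sW $) then yields \eqref{eq: gX>t conv to zero2}.

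I do not expect a genuine obstacle here: the only points needing a word of justification are the $ \sW $-a.e.\ finiteness of $ g(X) $ (immediate from its explicit law in Theorem~\ref{thm: NRY2}) and the precise $ \sW $-a.e.\ sense in which $ \int_0^{\infty} f(s)\,\d X_s $, and hence $ \cE(f;X) $, is defined (supplied by Theorem~\ref{thm: approx}); all the quantitative difficulty has already been absorbed into Theorem~\ref{thm: nondeg}, so the present lemma is essentially a corollary of it.
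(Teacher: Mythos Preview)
Your proposal is correct and is essentially identical to the paper's own proof, which simply notes that $\sW[\cE(f;X)\cK(V;X)]<\infty$ by Theorem~\ref{thm: nondeg} and concludes by the dominated convergence theorem. You have merely spelled out the details (the $\sW$-a.e.\ finiteness of $g(X)$ and the pointwise bound) that the paper leaves implicit.
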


\begin{proof}
Since $ \sW[\cE(f;X) \cK(V;X)] < \infty $ by Theorem \ref{thm: nondeg}. 
The desired conclusion is now obvious by the dominated convergence theorem. 
\end{proof}

\begin{Lem} \label{lem3-2}
Let $ h_t = \int_0^t f(s) \d s $ with $ f \in L^2(\d s) \cap L^1(\d s) $. 
Set 
\begin{align}
\tilde{f}(t) = \int_0^{\infty } |f(s+t)| \frac{\d s}{\sqrt{s}} 
, \quad t>0 , 
\label{}
\end{align}
\begin{align}
\sigma_t = \| f(\cdot+t) \| 
= \cbra{ \int_t^{\infty } f(s)^2 \d s }^{1/2} 
, \quad t>0 , 
\label{}
\end{align}
and set 
\begin{align}
E(t) = 
E \sbra{ \absol{ \exp \cbra{ \sigma_t |\cN| 
+ c \tilde{f}(t) + \frac{1}{2} \sigma_t^2 } - 1 }^2 } 
, \quad t>0 
\label{}
\end{align}
where $ \cN $ stands for the standard Gaussian variable 
and $ c = \sqrt{2/\pi} $. 
Then it holds that 
\begin{align}
R_a \sbra{ \absol{ \cE(f(\cdot + t);\cdot) - 1 }^2 } \le E(t) 
\quad \text{for any $ t>0 $ and any $ a \in \bR $}. 
\label{}
\end{align}
\end{Lem}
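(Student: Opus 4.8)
The plan is to reduce to the one-sided Bessel laws $R^+_a$ (for $a\ge 0$) by symmetry, to split the Wiener integral of $f(\cdot+t)$ under $R^+_a$ into a centered-Bessel piece plus a drift of absolute value at most $c\tilde f(t)$, and then to dominate $\absol{\cE(f(\cdot+t);\cdot)-1}^2$ by a \emph{convex} function of the centered-Bessel Wiener integral, to which Theorem~\ref{thm: FHY} applies directly.

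First I would dispose of the degenerate case: if $\tilde f(t)=\infty$ then $E(t)=\infty$ and there is nothing to prove, so assume $\tilde f(t)<\infty$. Since $R^-_a$ is the law of $(-X)$ under $R^+_{-a}$, since $R_0=\frac12(R^+_0+R^-_0)$, and since $\cE(f(\cdot+t);-X)=\cE(-f(\cdot+t);X)$ while neither $\tilde f$ nor $\sigma_t$ is affected when $f$ is replaced by $-f$, it suffices to prove $R^+_a\sbra{\absol{\cE(f(\cdot+t);\cdot)-1}^2}\le E(t)$ for every $a\ge 0$.

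Next, because $\phi_a(s)\le\phi_0(s)=\sqrt{2/(\pi s)}$ we have $\int_0^\infty\absol{f(s+t)}\phi_a(s)\,\d s\le c\,\tilde f(t)<\infty$, so $f(\cdot+t)\in L^2(\d s)\cap L^1(\phi_a(s)\d s)$ and the a.s.\ decomposition $\int_0^\infty g\,\d X_s=\int_0^\infty g\,\d\hat{X}^{(a)}_s+\int_0^\infty g\,\phi_a(s)\,\d s$ valid under $R^+_a$ (Section~\ref{sec: Wint}) applies with $g=f(\cdot+t)$. Using also $\int_0^\infty f(s+t)^2\,\d s=\sigma_t^2$, this gives, $R^+_a$-a.s.,
\[
\cE(f(\cdot+t);X)=\exp\rbra{Y+D-\tfrac12\sigma_t^2},\qquad Y:=\int_0^\infty f(s+t)\,\d\hat{X}^{(a)}_s,\qquad \absol{D}\le c\,\tilde f(t).
\]
Now the elementary bound $\absol{\e^{x}-1}\le\e^{\absol{x}}-1$ (valid for all real $x$ because $\e^{\absol{x}}+\e^{-\absol{x}}\ge 2$), combined with $\absol{Y+D-\tfrac12\sigma_t^2}\le\absol{Y}+c\tilde f(t)+\tfrac12\sigma_t^2$ and the monotonicity of $u\mapsto\e^{u}-1$ on $[0,\infty)$, yields
\[
\absol{\cE(f(\cdot+t);X)-1}^2\le\psi(Y),\qquad \psi(y):=\rbra{\exp\rbra{\absol{y}+c\tilde f(t)+\tfrac12\sigma_t^2}-1}^2,
\]
and this $\psi$ is nonnegative and convex, being the composition of the nondecreasing convex map $u\mapsto(\e^{u+b}-1)^2$ on $[0,\infty)$ (with $b=c\tilde f(t)+\tfrac12\sigma_t^2\ge 0$) with the convex map $y\mapsto\absol{y}$.

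Finally I would invoke Theorem~\ref{thm: FHY} for this $\psi$ and $f(\cdot+t)\in L^2(\d s)$ to get $R^+_a[\psi(Y)]\le W\sbra{\psi\rbra{\int_0^\infty f(s+t)\,\d X_s}}$; under $W$ the integral $\int_0^\infty f(s+t)\,\d X_s$ is centered Gaussian with variance $\int_0^\infty f(s+t)^2\,\d s=\sigma_t^2$, hence equal in law to $\sigma_t\cN$, so the right-hand side is exactly $E(t)$. Chaining the three estimates proves the claim for $R^+_a$, and the symmetrization step from the second paragraph concludes. I expect the only subtle point to be the third step: one has to peel off the sign-sensitive terms $D$ and $-\tfrac12\sigma_t^2$ so that the dominating functional is a convex function of $\absol{Y}$ alone, since Theorem~\ref{thm: FHY} is available only for convex $\psi$; everything else is routine.
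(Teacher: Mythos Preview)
Your proof is correct and follows essentially the same route as the paper's: decompose the Wiener integral into the centered-Bessel part plus a bounded drift, use $|e^x-1|\le e^{|x|}-1$, and apply Theorem~\ref{thm: FHY} to a convex function of the centered integral. The only cosmetic differences are that you bound the drift $D$ by $c\tilde f(t)$ \emph{before} invoking Theorem~\ref{thm: FHY} (the paper keeps $D-\tfrac12\sigma_t^2$ inside the convex function $\psi(x)=(e^{|x+b|}-1)^2$ and bounds it afterward on the Gaussian side), and that you spell out the symmetry reduction from $R_a$ to $R^+_{|a|}$, which the paper leaves implicit.
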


\begin{proof}
Let us write $ \abra{f,g} = \int_0^{\infty } f_1(s) f_2(s) \d s $ for $ f_1,f_2 \in L^2(\d s) $. 
Note that 
\begin{align}
\cE(f(\cdot + t);X) 
= \exp \cbra{ \int_0^{\infty } f(s+t) \d \hat{X}^{(a)}_s 
+ \abra{ f(\cdot+t),\phi_a } 
- \frac{1}{2} \sigma_t^2 } 
\quad \text{under $ R_a^+ $} . 
\label{}
\end{align}
Since $ |e^b-1| \le e^{|b|}-1 $ for any $ b \in \bR $, we have 
\begin{align}
\absol{ \cE(f(\cdot + t);\cdot) - 1 }^2 
\le 
\absol{ \exp \cbra{ \absol{ \int_0^{\infty } f(s+t) \d \hat{X}^{(a)}_s 
+ \abra{ f(\cdot+t),\phi_a } 
- \frac{1}{2} \sigma_t^2 } } 
-1 }^2 . 
\label{}
\end{align}
Since, for any constant $ b \in \bR $, 
$ \psi(x) = (\e^{|x+b|}-1)^2 $ is a convex function, 
we may apply Theorem \ref{thm: FHY} and obtain 
\begin{align}
R_a^+ \sbra{ \absol{ \cE(f(\cdot + t);\cdot) - 1 }^2 } 
\le E \sbra{ \absol{ \exp \cbra{ 
\absol{ \sigma_t \cN + \abra{ f(\cdot + t),\phi_a } - \frac{1}{2} \sigma_t^2 } } - 1 
}^2 } . 
\label{}
\end{align}
Since 
\begin{align}
\absol{ \abra{ f(\cdot + t),\phi_a } } 
\le \abra{ |f(\cdot + t)|,\phi_0 } 
= c \tilde{f}(t) , 
\label{}
\end{align}
we obtain the desired result. 
\end{proof}

\begin{Lem} \label{lem3}
Let $ h_t = \int_0^t f(s) \d s $ with $ f \in L^2(\d s) \cap L^1(\d s) $. 
Then there exists a sequence $ t(n) \to \infty $ such that 
\begin{align}
\sW \sbra{ \e^{-g(X)} \cK(V;X) \absol{ \cE(f;X)-\cE_{t(n)}(f;X) } } 
\to 0 . 
\label{}
\end{align}
\end{Lem}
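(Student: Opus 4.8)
The plan is to split $\sW\sbra{\e^{-g(X)}\cK(V;X)\absol{\cE(f;X)-\cE_{t}(f;X)}}$ into the contributions of $\cbra{g(X)>t}$ and $\cbra{g(X)\le t}$, and to pick the sequence $t(n)$ only at the very end. On $\cbra{g(X)>t}$ I would dominate $\absol{\cE(f;X)-\cE_{t}(f;X)}\le\cE(f;X)+\cE_{t}(f;X)$; the first resulting term $\sW\sbra{\e^{-g(X)}\cK(V;X)\cE(f;X);g(X)>t}$ tends to $0$ as $t\to\infty$ by Lemma \ref{lem2-2}, while for the second, bounding $\cK(V;X)\le1$ and invoking Lemma \ref{lem2} gives $\sW\sbra{\e^{-g(X)}\cK(V;X)\cE_{t}(f;X);g(X)>t}\le\sW\sbra{\e^{-g(X)}\cE_{t}(f;X);g(X)>t}\to0$. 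This works along any sequence $t\to\infty$, so no subsequence is needed here.

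On $\cbra{g(X)\le t}$ I would write $\cE(f;X)-\cE_{t}(f;X)=\cE_{t}(f;X)\rbra{\cE(f(\cdot+t);\theta_t X)-1}$ and apply the Cauchy--Schwarz inequality, separating $\sqrt{\cK(V;X)}\,\cE_{t}(f;X)$ from $\e^{-g(X)}\sqrt{\cK(V;X)}\,\absol{\cE(f(\cdot+t);\theta_t X)-1}$. The first factor is controlled uniformly in $t$ by repeating the proof of Theorem \ref{thm: nondeg} with $\cE_{t}(f;X)^2$ in place of $\cE(f;X)$: by \eqref{eq: varphi2} and \eqref{eq: equiv between WxV and sW}, $\sW\sbra{\cK(V;X)\cE_{t}(f;X)^2}=\varphi_V(0)\,W^{(V)}\sbra{\cE_{t}(f;X)^2}$, and under $W^{(V)}$ the drift contribution $2\int_0^t f(s)\tfrac{\varphi_V'}{\varphi_V}(X_s)\,\d s$ to $2\int_0^t f\,\d X_s$ is bounded, pathwise, by $\tfrac2{C_V}\| f \|_{L^1(\d s)}$ in absolute value, since $\absol{\varphi_V'}\le1$ and $\varphi_V\ge C_V$ (Remark \ref{rem: linear bound}); hence
\[
\sup_{t>0}\sW\sbra{\cK(V;X)\cE_{t}(f;X)^2}\le\varphi_V(0)\exp\rbra{\tfrac2{C_V}\| f \|_{L^1(\d s)}+\| f \|_{L^2(\d s)}^2}=:C_1<\infty .
\]
For the second factor I would decompose $\sW$ over the value $g(X)=u$ using \eqref{eq: defsW}: on $\cbra{g(X)=u}$ with $u\le t$ one has $X=X^{(u)}\bullet Y$ with $Y$ governed by $R$, $\e^{-g(X)}=\e^{-u}$ and $\theta_t X=\theta_{t-u}Y$, so that, bounding $\cK(V;X)\le1$ and then using the Markov property of $R$ at time $t-u$ together with Lemma \ref{lem3-2} applied to $R_{Y_{t-u}}$,
\[
\sW\sbra{\e^{-2g(X)}\cK(V;X)\absol{\cE(f(\cdot+t);\theta_t X)-1}^2;g(X)\le t}\le E(t)\int_0^{\infty}\frac{\e^{-2u}}{\sqrt{2\pi u}}\,\d u=\frac12 E(t) .
\]
Consequently the $\cbra{g(X)\le t}$ part is at most $\sqrt{C_1}\,\sqrt{E(t)/2}$.

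Finally, since $f\in L^2(\d s)$ one has $\sigma_t\to0$, and since $f\in L^1(\d s)$, part (ii) of Lemma \ref{lem L1} provides a sequence $t(n)\to\infty$ with $\tilde f(t(n))\to0$; along it $E(t(n))\to E\sbra{\absol{\e^{0}-1}^2}=0$ by dominated convergence. Combining the three estimates along $t(n)$ yields the assertion. The step I expect to be the main obstacle is the uniform-in-$t$ bound $C_1$ on $\sW\sbra{\cK(V;X)\cE_{t}(f;X)^2}$, together with the routine but necessary check that the $\cF_t$-measurable functional $\cE_{t}(f;X)^2$ may legitimately be inserted into \eqref{eq: equiv between WxV and sW}; once this is in hand, the region $\cbra{g(X)\le t}$ is controlled by Lemma \ref{lem3-2} and the region $\cbra{g(X)>t}$ by Lemmas \ref{lem2} and \ref{lem2-2}.
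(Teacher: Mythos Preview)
Your proposal is correct and follows essentially the same approach as the paper's proof: split according to $\{g(X)>t\}$ versus $\{g(X)\le t\}$, handle the first region via Lemmas \ref{lem2} and \ref{lem2-2}, and on the second region use the multiplicativity of $\cE$, apply Cauchy--Schwarz, bound the $\cE_t$-factor uniformly in $t$ via the argument behind Theorem \ref{thm: nondeg}, control the remaining factor through the decomposition \eqref{eq: defsW} together with Lemma \ref{lem3-2}, and conclude by Lemma \ref{lem L1}(ii). The only cosmetic difference is in the Cauchy--Schwarz grouping: the paper places the full weight $\cK(V;X)$ in the first factor, so that $\cK(V;X)^2=\cK(2V;X)$ and Theorem \ref{thm: nondeg} can be quoted verbatim with $(2V,\,2f1_{[0,t)})$, whereas you split $\cK(V;X)=\sqrt{\cK(V;X)}\cdot\sqrt{\cK(V;X)}$ and instead rerun the proof of Theorem \ref{thm: nondeg} with the original $V$; both routes yield the same uniform bound up to harmless constants.
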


\begin{proof}
By Lemmas \ref{lem2} and \ref{lem2-2}, it suffices to prove that 
\begin{align}
\sW \sbra{ \e^{-g(X)} \cK(V;X) \absol{ \cE(f;X)-\cE_t(f;X) } ; g(X) \le t } 
\label{eq: lem3-1}
\end{align}
converges to 0 along some sequence $ t=t(n) \to \infty $. 

By the multiplicativity: 
\begin{align}
\cE(f;X) = \cE_t(f;X) \cE(f(\cdot + t);\theta_t X) , 
\label{}
\end{align}
we have 
\begin{align}
\text{\eqref{eq: lem3-1}} 
=& \sW \sbra{ \e^{-g(X)} \cK(V;X) \cE_t(f;X) \absol{ \cE(f(\cdot + t);\theta_t X) -1 } ; 
g(X) \le t } . 
\label{eq: lem3-2}
\end{align}
By the Schwarz inequality, \eqref{eq: lem3-2} is dominated by $ A^{1/2} B^{1/2} $ where 
\begin{align}
A = \sW \sbra{ \cK(V;X)^2 \cE_t(f;X)^2 } 
\label{}
\end{align}
and 
\begin{align}
B = \sW \sbra{ \e^{-2g(X)} \absol{ \cE(f(\cdot + t);\theta_t X) -1 }^2 ; 
g(X) \le t } . 
\label{}
\end{align}
By Theorem \ref{thm: nondeg}, we see that 
\begin{align}
A 
\le& \sW \sbra{ \cK(2V;X) \cE(2f1_{[0,t)};X) } \exp \rbra{ \| f \|_{L^2(\d s)}^2 } 
\label{} \\
\le& \varphi_{2V}(0) \exp \rbra{ \| f \|_{L^2(\d s)}^2 
+ \frac{2}{C_{2V}} 
\| f \|_{L^1(\d s)} } . 
\label{}
\end{align}
By Lemma \ref{lem3-2}, we see that 
\begin{align}
B 
=& \int_0^t \frac{\d u}{\sqrt{2 \pi u}} \e^{-2u} 
\rbra{ \Pi^{(u)} \bullet R } \sbra{ \absol{ \cE(f(\cdot + t);\theta_t X) -1 }^2 } 
\label{} \\
=& \int_0^t \frac{\d u}{\sqrt{2 \pi u}} \e^{-2u} 
R \sbra{ R_{X_{t-u}} \sbra{ \absol{ \cE(f(\cdot + t);\cdot) -1 }^2 } } 
\label{} \\
\le& E(t) \int_0^{\infty } \frac{\d u}{\sqrt{2 \pi u}} \e^{-2u} . 
\label{}
\end{align}
Therefore we see that \eqref{eq: lem3-1} is dominated by $ E(t) $ 
up to a multiplicative constant. 
The proof is now completed by (ii) of Lemma \ref{lem L1}. 
\end{proof}

\subsection{The third step}

In what follows, 
we take and utilize a non-negative, bounded, continuous function $ v_0 $ on $ \bR $ 
such that $ v_0(x) = 1 $ for $ |x| \le 2 $ and $ v_0(x)=0 $ for $ |x| \ge 3 $. 
We write $ v_1 = 1_{[-1,1]} $. 
We set $ V_0(\d x) = v_0(x) \d x $ and $ V_1(\d x) = v_1(x) \d x $. 
For any $ V $, we write 
\begin{align}
\Gamma(V;X) = \e^{-g(X)} \cK(V;X) . 
\label{}
\end{align}

\begin{Lem} \label{lem0}
Let $ h_t = \int_0^t f(s) \d s $ with $ f \in L^1(\d s) $. Suppose that 
\begin{align}
\int_T^{\infty } |f(s)| \d s \le 1 
\label{eq: T}
\end{align}
for some $ 0<T<\infty $. 
Then it holds that 
\begin{align}
\cK(V_0;X+h_{\cdot \wedge t}) 
\le 
\cK(V_1;X+h_{\cdot \wedge T}) 
, \quad t \ge T . 
\label{eq: lem0}
\end{align}
\end{Lem}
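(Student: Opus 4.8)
The plan is to reduce \eqref{eq: lem0} to the occupation times formula together with a pointwise comparison of integrands. For a finite measure $ V(\d x) = v(x)\,\d x $ with bounded, compactly supported density $ v $, the occupation times formula gives
\begin{align}
\int_{\bR} L^x_{\infty}(Y)\, V(\d x) = \int_0^{\infty} v(Y_s)\,\d s
\end{align}
for $ \sW $-a.e.\ $ X $, with $ Y $ any of $ X $ or $ X + h_{\cdot \wedge t} $; indeed $ h_{\cdot \wedge t} $ is of finite variation, so it does not affect the quadratic variation and $ \d\langle Y\rangle_s = \d s $ in each case. Hence $ \cK(V_i; X+h_{\cdot \wedge t}) = \exp\rbra{ - \int_0^{\infty} v_i(X_s + h_{s\wedge t})\,\d s } $, and \eqref{eq: lem0} is equivalent to the pathwise-in-$ X $ inequality
\begin{align}
\int_0^{\infty} v_0(X_s + h_{s\wedge t})\,\d s \ge \int_0^{\infty} v_1(X_s + h_{s\wedge T})\,\d s .
\end{align}

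First I would split each side at time $ T $. On $ [0,T] $, since $ T \le t $ we have $ h_{s\wedge t} = h_s = h_{s\wedge T} $, and since $ v_0 \ge 1_{[-2,2]} \ge 1_{[-1,1]} = v_1 $ pointwise, the restriction of the left integrand to $ [0,T] $ dominates that of the right. On $ [T,\infty) $ I would invoke the hypothesis \eqref{eq: T}: for $ s \ge T $,
\begin{align}
\absol{ h_{s\wedge t} - h_T } = \absol{ \int_T^{s\wedge t} f(r)\,\d r } \le \int_T^{\infty} |f(r)|\,\d r \le 1 .
\end{align}
Consequently, for $ s \ge T $: if $ |X_s + h_T| > 1 $ then $ v_1(X_s + h_T) = 0 \le v_0(X_s + h_{s\wedge t}) $; and if $ |X_s + h_T| \le 1 $ then $ |X_s + h_{s\wedge t}| \le |X_s + h_T| + |h_{s\wedge t} - h_T| \le 2 $, so $ v_0(X_s + h_{s\wedge t}) = 1 = v_1(X_s + h_T) $. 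In either case $ v_0(X_s + h_{s\wedge t}) \ge v_1(X_s + h_T) = v_1(X_s + h_{s\wedge T}) $; integrating over $ [T,\infty) $ and adding the $ [0,T] $ contribution gives the asserted integral inequality, hence \eqref{eq: lem0}.

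The argument is essentially bookkeeping, and I do not anticipate a genuine obstacle. The one point requiring a little care is the passage through the occupation times formula — that is, checking that both sides of \eqref{eq: lem0} may be evaluated $ \sW $-a.e.\ via $ s \mapsto v_i(X_s + h_{s\wedge t}) $, which holds because $ X $ is (piecewise, hence globally) a continuous semimartingale under $ \sW $ with $ \d\langle X\rangle_s = \d s $ while $ h_{\cdot\wedge t} $ is of finite variation. The only substantive input is the ``cushion'' between the support radius $ 1 $ of $ v_1 $ and the plateau radius $ 2 $ of $ v_0 $, which is exactly what absorbs the displacement $ |h_{s\wedge t} - h_T| \le 1 $ permitted by \eqref{eq: T}.
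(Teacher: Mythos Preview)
Your proposal is correct and follows essentially the same route as the paper: both reduce the inequality to the pointwise comparison $v_0(X_s + h_{s\wedge t}) \ge v_1(X_s + h_{s\wedge T})$, using that $|h_{s\wedge t} - h_T| \le 1$ for $s \ge T$ so the cushion between the plateau radius $2$ of $v_0$ and the support radius $1$ of $v_1$ absorbs the displacement. Your version is slightly more explicit in spelling out the occupation times reformulation and the trivial $[0,T]$ contribution, but the argument is the same.
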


\begin{proof}
Note that we have $ |h_t-h_T| \le 1 $ for any $ t \ge T $. 
If $ s \ge T $ satisfies $ |X_s + h_T| \le 1 $, then we have 
$ |X_s + h_{s \wedge t}| \le 2 $. 
Hence we have 
\begin{align}
\int_0^{\infty } v_0(X_s + h_{s \wedge t}) \d s 
\ge \int_0^{\infty } v_1(X_s + h_{s \wedge T}) \d s 
, \quad t \ge T . 
\label{}
\end{align}
This completes the proof. 
\end{proof}

\begin{Lem} \label{lem1}
Let $ h_t = \int_0^t f(s) \d s $ with $ f \in L^2(\d s) \cap L^1(\d s) $. 
Let $ 0<r<\infty $ 
and let $ G_r(X) $ be a non-negative, bounded, continuous $ \cF_r $-measurable functional. 
Then it holds that 
\begin{align}
\sW \sbra{ G_r(X+h_{\cdot \wedge t}) \Gamma(V_0;X+h_{\cdot \wedge t}) } 
\tend{}{t \to \infty } 
\sW \sbra{ G_r(X+h) \Gamma(V_0;X+h) } . 
\label{eq: step 2}
\end{align}
\end{Lem}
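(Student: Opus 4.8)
The plan is to prove \eqref{eq: step 2} by the dominated convergence theorem. Two ingredients are needed: a $\sW$-integrable function dominating $G_r(X+h_{\cdot \wedge t})\,\Gamma(V_0;X+h_{\cdot \wedge t})$ for all sufficiently large $t$, and the pointwise convergence of these functionals to $G_r(X+h)\,\Gamma(V_0;X+h)$, $\sW$-a.e.

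For the domination, since $f\in L^1(\d s)$ I first fix $T>0$ with $\int_T^{\infty}|f(s)|\,\d s\le 1$ and apply Lemma \ref{lem0}: for $t\ge T$,
\[
G_r(X+h_{\cdot \wedge t})\,\Gamma(V_0;X+h_{\cdot \wedge t})
\le \|G_r\|_{\infty}\,\cK(V_0;X+h_{\cdot \wedge t})
\le \|G_r\|_{\infty}\,\cK(V_1;X+h_{\cdot \wedge T}),
\]
where I used $\e^{-g}\le 1$. To see the right-hand side lies in $L^1(\sW)$, I apply Proposition \ref{thm: main lem} with $F=\cK(V_1;\cdot)$ and then Theorem \ref{thm: nondeg} with the function $f\,1_{[0,T)}\in L^2(\d s)\cap L^1(\d s)$ and the measure $V_1$ (noting $\cE_T(f;X)=\cE(f\,1_{[0,T)};X)$):
\[
\sW\sbra{\cK(V_1;X+h_{\cdot \wedge T})}=\sW\sbra{\cK(V_1;X)\,\cE_T(f;X)}\le \varphi_{V_1}(0)\,\exp\rbra{C_{V_1}^{-1}\|f\|_{L^1(\d s)}}<\infty .
\]

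For the pointwise convergence I split $\Gamma(V_0;Y)=\exp\bigl(-g(Y)-\int_0^{\infty}v_0(Y_s)\,\d s\bigr)$, the last term being the occupation-time form of $\cK(V_0;Y)$, and treat the factors $G_r$, $\cK(V_0;\cdot)$, $\e^{-g}$ in turn. Since $G_r$ is $\cF_r$-measurable and $h_{s\wedge t}=h_s$ whenever $s\le r\le t$, one has $G_r(X+h_{\cdot \wedge t})=G_r(X+h)$ already for all $t\ge r$. For $\cK(V_0;\cdot)$, set $c_0=\sup_s|h_s|<\infty$; $\sW$-a.e.\ the path $X$ is transient (after $g(X)$ it is a symmetrized $3$-dimensional Bessel process, so $|X_s|\to\infty$), hence $\{s:|X_s|\le 3+c_0\}$ is bounded, which supplies the $s$-integrable majorant $\|v_0\|_{\infty}\,1_{\{|X_s|\le 3+c_0\}}$ for $v_0(X_s+h_{s\wedge t})$; as $v_0(X_s+h_{s\wedge t})\to v_0(X_s+h_s)$ for each $s$, dominated convergence in the variable $s$ gives $\int_0^{\infty}v_0(X_s+h_{s\wedge t})\,\d s\to\int_0^{\infty}v_0(X_s+h_s)\,\d s$, $\sW$-a.e.

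The step I expect to need the most care is the $\e^{-g}$ factor, since $g$ is not continuous under uniform convergence in general. The point is that the zero set of $X+h_{\cdot \wedge t}$ equals $\{s\le t:X_s+h_s=0\}\cup\{s>t:X_s=-h_t\}$, and because $|h_t|\le c_0$ the second set is contained in $\{s:|X_s|\le c_0\}$, which is bounded $\sW$-a.e.\ (and so is $g(X+h)\le\sup\{s:|X_s|\le c_0\}$). Hence for every $t$ exceeding both $g(X+h)$ and $\sup\{s:|X_s|\le c_0\}$ the second set is empty and the first coincides with the zero set of $X+h$, so that $g(X+h_{\cdot \wedge t})=g(X+h)$ exactly. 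Assembling the three limits yields the $\sW$-a.e.\ convergence of the integrand, and the dominated convergence theorem then gives \eqref{eq: step 2}.
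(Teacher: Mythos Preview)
Your argument is correct and follows the same overall strategy as the paper: dominated convergence, with the dominant $\cK(V_1;X+h_{\cdot\wedge T})$ obtained from Lemma~\ref{lem0} and its $\sW$-integrability deduced via Proposition~\ref{thm: main lem}. Two minor differences are worth noting. First, for the integrability of the dominant the paper invokes the second clause of Proposition~\ref{thm: main lem} together with $M_T[\cK(V_1;\cdot);X]\in L^2(W)$ from \eqref{eq: LpW}, whereas you use the identity in Proposition~\ref{thm: main lem} followed by Theorem~\ref{thm: nondeg}; both routes are valid. Second, for the pointwise convergence the paper simply asserts $g(X+h_{\cdot\wedge t})\to g(X+h)$ ``because $h_{\cdot\wedge t}\to h$ uniformly'', while you correctly observe that $g$ is not continuous under uniform convergence in general and instead show directly, using transience of $X$ under $\sW$, that $g(X+h_{\cdot\wedge t})=g(X+h)$ for all sufficiently large $t$; your treatment here is more careful than the paper's and fills in what the paper leaves implicit.
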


\begin{proof}
Note that $ g(X+h_{\cdot \wedge t}) \to g(X+h) $ 
as $ t \to \infty $, because $ h_{\cdot \wedge t} \to h $ uniformly. 
By the continuity assumptions on $ G_r $ and $ v $, we have 
\begin{align}
G_r(X+h_{\cdot \wedge t}) \Gamma(V_0;X+h_{\cdot \wedge t}) 
\to 
G_r(X+h) \Gamma(V_0;X+h) 
\label{}
\end{align}
for $ \sW(\d X) $-almost every path $ X $. 
Since $ G_r(X) $ is bounded, 
it suffices to find $ Z \in L^1(\sW) $ such that 
$ \Gamma(V_0;X+h_{\cdot \wedge t}) \le Z(X) $, $ \sW $-a.e. 
for any large $ t $; 
in fact, we may obtain \eqref{eq: step 2} 
by the dominated convergence theorem. 

Since $ f \in L^1(\d s) $, 
we may take $ T>0 $ such that \eqref{eq: T} holds. 
By Lemma \ref{lem0}, we have \eqref{eq: lem0}, and hence we have 
\begin{align}
\Gamma(V_0;X+h_{\cdot \wedge t}) 
\le \cK(V_1;X+h_{\cdot \wedge T}) 
, \quad t \ge T . 
\label{}
\end{align}
Since $ M_T[\cK(V_1;\cdot);X] \in L^2(W) $ by \eqref{eq: LpW}, 
we see, by Proposition \ref{thm: main lem}, that 
\begin{align}
\cK(V_1;X+h_{\cdot \wedge T}) \in L^1(\sW) . 
\label{}
\end{align}
Therefore this functional $ \cK(V_1;X+h_{\cdot \wedge T}) $ is as desired. 
\end{proof}

\subsection{Proof of Theorem \ref{thm: main}}

We now proceed to prove Theorem \ref{thm: main}. 

\begin{proof}[Proof of Theorem \ref{thm: main}.]
Let $ h_t = \int_0^t f(s) \d s $ with $ f \in L^2(\d s) \cap L^1(\d s) $. 
Let $ 0<s<\infty $ 
and let $ G_s(X) $ be a non-negative, bounded, continuous 
$ \cF_s $-measurable functional. 
Let $ T > 0 $. 
Then, by Proposition \ref{thm: main lem}, we have 
\begin{align}
\sW \sbra{ G_s(X+h_{\cdot \wedge T}) \Gamma(V_0;X+h_{\cdot \wedge T}) } 
= \sW \sbra{ G_s(X) \Gamma(V_0;X) \cE_T(f;X) } . 
\label{eq: X + h(t)}
\end{align}
By Lemma \ref{lem1}, we have 
\begin{align}
\sW \sbra{ G_s(X+h_{\cdot \wedge T}) \Gamma(V_0;X+h_{\cdot \wedge T}) } 
\tend{}{T \to \infty } 
\sW \sbra{ G_s(X+h) \Gamma(V_0;X+h) } . 
\label{}
\end{align}
By Lemma \ref{lem3}, we have 
\begin{align}
\sW \sbra{ G_s(X) \Gamma(V_0;X) \cE_T(f;X) } 
\to 
\sW \sbra{ G_s(X) \Gamma(V_0;X) \cE(f;X) } 
\label{}
\end{align}
along some sequence $ T=t(n) \to \infty $. 
Thus, taking the limit as $ T=t(n) \to \infty $ 
in both sides of \eqref{eq: X + h(t)}, 
we obtain 
\begin{align}
\sW \sbra{ G_s(X+h) \Gamma(V_0;X+h) } 
= \sW \sbra{ G_s(X) \Gamma(V_0;X) \cE(f;X) } . 
\label{}
\end{align}
Hence we obtain 
\begin{align}
\sW \sbra{ G(X+h) \Gamma(V_0;X+h) } 
= \sW \sbra{ G(X) \Gamma(V_0;X) \cE(f;X) } 
\label{}
\end{align}
for any non-negative $ \cF_{\infty } $-measurable functional $ G(X) $. 
Replacing $ G(X) $ by $ F(X) \Gamma(V_0;X)^{-1} $, 
we obtain the desired conclusion. 
\end{proof}

{\bf Acknowledgements.} 
The author would like to thank Professors Marc Yor, Tadahisa Funaki, 
Shinzo Watanabe and Ichiro Shigekawa 
for their useful comments which helped improve this paper.

\bibliographystyle{plain}

\end{document}